\numberwithin{equation}{section}
\def\L{\mathfrak{L}}
\def\X{\mathfrak{X}}
\newtheorem{Corollary}{\sc Corollary}[section]
\newtheorem{Theorem}{\sc Theorem}[section]
\newtheorem{Lemma}{\sc Lemma}[section]
\newtheorem{Proposition}{\sc Proposition}[section]
\newtheorem{Remark}{\sc Remark}[section]
\newcommand{\R}{\mathbb R}
\newcommand{\C}{\mathbb C}
\newcommand{\N}{\mathbb N}
\newcommand{\F}{\mathcal{F}}
\title[The Fowler Equation]
{Existence of Travelling-Wave Solutions and Local Well-Posedness of the Fowler Equation}
\author[Borys Alvarez-Samaniego and Pascal Azerad]{Borys Alvarez-Samaniego \and Pascal Azerad}
\subjclass{
47J35; 35G25; 76B25.
}
\keywords{Nonlocal evolution equation; travelling-wave}
\email{balvarez@math.uic.edu} 
\email{azerad@math.univ-montp2.fr}
\begin{document}
\maketitle 
 {\footnotesize \centerline{Institut de Math\'ematiques et Mod\'elisation de Montpellier (I3M)-UMR 5149 CNRS}
  \centerline{Universit\'e Montpellier 2} 
  \centerline{CC 051, Place Eug\`ene Bataillon, 34095 Montpellier Cedex 5, France}}
%%%%%%%%%%%%%%%%%%%%%%%%%%%%%%%%%%%%%%%%%%%%

\begin{abstract}
We study the existence of travelling-waves and local well-posedness in a 
subspace of $C_b^1(\R)$ for a nonlinear evolution equation recently 
proposed by Andrew C. Fowler to describe the dynamics of dunes.  
\end{abstract}

%\maketitle
%%%%%%%%%%%%%%%%%%%%%%%%%%%%%%%%%%%%%%%%%%%%%%%%%%%%%%%%%%%%%%%%%%%%%%%%%
%  INTRODUCTION
%%%%%%%%%%%%%%%%%%%%%%%%%%%%%%%%%%%%%%%%%%%%%%%%%%%%%%%%%%%%%%%%%%%%%%%%%

\section{Introduction}

\subsection{General setting}
Dunes are land formations of sand which are subject to different 
forms and sizes based on their interaction with the wind or water or 
some other mobile medium.  In the case of dunes in the 
desert their shapes depend mainly on the amount of sand available and 
on the change of the direction of the wind with time (see 
Herrmann and Sauermann \cite{hs:hs}).  Some examples of dune patterns 
are longitudinal, transverse, star and Barchan dunes, however, there are more 
than 100 categories of dunes. Dunes also occur under rivers, for similar reasons, 
but their shapes are less exotic in this case, because the flow is mainly 
uni-directional.

An interesting topic is to try to understand if 
the shape of a dune is maintained when it moves.  With 
regard to Barchan dunes, for example, Herrmann and Sauermann \cite{hs:hs} 
have given some arguments against the hypothesis 
that Barchan dunes are solitary waves, mainly because  they  constantly 
lose sand at the two horns and tend to disappear 
if  not supplied with new sand.  Recently, Dur\'an, Schw\"ammle 
and Herrmann \cite{dsh:dsh} considered a minimal model for dunes consisting 
of three coupled equations of motion to study numerically the 
mechanisms of dune interactions for the case when a small 
Barchan dune collides with a bigger one; four different cases were 
observed, depending only on the relative sizes of the two dunes, namely, 
coalescence, breeding, budding, and solitary wave behavior.

In this paper, we are concerned with the following evolution equation 
proposed by Fowler (see \cite{f0:f0}, \cite{f1:f1} and \cite{f2:f2} 
for more details) to study nonlinear dune formation:
\begin{equation}\label{eq:fow0}
  \frac{\partial u}{\partial t} (x,t) + \frac{\partial}{\partial x} \Big{[} 
   \frac{u^2}{2}(x,t) - \frac{\partial u}{\partial x}(x,t) + \int_0^{+\infty} 
  \xi^{-1/3} \frac{\partial u}{\partial x} (x-\xi, t) d\xi \Big{]}=0,
\end{equation}
where $u=u(x,t)$ represents the dune amplitude, $x \in \R$, and $t \ge 0$.  The 
second and fourth terms of equation (\ref{eq:fow0}) correspond to 
the nonlinear and nonlocal terms respectively, while the third term is the 
dissipative term.\par
Let us give a brief description of the model derivation. For more details, we 
refer to Fowler \cite{f0:f0, f1:f1, f2:f2}, which we follow closely. The model 
stems from the Exner law, which is the conservation of mass for the sediment:
$$ \frac{\partial u}{\partial t} +  \frac{\partial q}{\partial x}  = 0,$$ where
the bedload transport $q = q(\tau)$ is assumed, in the case of dunes, to depend 
only on the   stress  $\tau$ exerted by the fluid on the erodible bed.  We  
assume a two-dimensional flow, where $x$ is the horizontal direction and 
the second direction is the upwards coordinate orthogonal to $x$. This  should account for transverse dunes, but obviously not  for other types of dunes.  The nonlocal 
term in equation (\ref{eq:fow0}) arises from a subtle modelling 
of the basal shear stress $\tau_b$.  Roughly speaking, the turbulent bottom 
shear stress is given by $\tau_b \approx f \rho v^2$, where $\rho$ is the fluid 
density, $f$ is a dimensionless friction coefficient and $v$ is the mean fluid 
velocity (vertically averaged).  By performing an asymptotic expansion with 
respect to the aspect ratio $\epsilon$ of the evolving  bedform, 
$\epsilon =\frac{\text{bed thickness}}{\text{fluid depth}} \ll 1,$ 
and a perturbation analysis of a basic Poiseuille flow (Orr-Sommerfeld equation), 
Fowler \cite{f0:f0, f1:f1, f2:f2} was able to obtain the following expression:
  $$\tau_b \approx f \rho v^2 \left\{ 1-u + \alpha \int_0^{+\infty} 
  \xi^{-1/3} \frac{\partial u}{\partial x} (x-\xi, t)  d\xi \right\},$$
where $\alpha$ is a positive constant proportional to $Re^{1/3}$, $Re$ being 
the Reynolds number.  Due to the bed slope $\frac{\partial u}{\partial x}$, there 
is an additional force generated by gravity $g$. Therefore,  the net stress causing 
motion is actually $\tau = \tau_b - (\rho_s - \rho) g D_s\frac{\partial u}{\partial x}$, 
where $\rho_s$ is the sediment density and $ D_s$ the mean diameter of a 
sediment particle.  As long as $u$ is small, the shallow water approximation 
applies to the velocity $v$ and,  for small Froude number, the (dimensionless) mean 
fluid velocity  can be approximated by $v \approx \frac{1}{1-u}$.  Thus, the mean 
fluid velocity and the bottom shear stress  depend on the motion of the dune 
profile $u$, and therefore there is a feedback between the dune profile and the 
motion of the fluid. In dimensionless variables, taking all physical constants 
equal to $1$, the  resulting net stress is then given by
$$
  \tau \approx  1 + u + u^2 + \int_0^{+\infty} \xi^{-1/3} 
  \frac{\partial u}{\partial x} (x-\xi, t)\,  d\xi  - \frac{\partial u}{\partial x}.
$$ 
Notice that the nonlinear nonlocal term 
$ 2 u \int_0^{+\infty} \xi^{-1/3} \frac{\partial u}{\partial x} (x-\xi, t) d\xi$ has 
been discarded.   By a Taylor expansion, up to order $2$, we get  
$q(\tau) \approx q(1) + q'(1)(\tau -1) + \frac{1}{2}q''(1)(\tau -1)^2$.
Now, considering a moving spatial coordinate, i.e. replacing $x$ by
the new variable $x-q'(1)t$,   plugging $q$ into the Exner equation,  after a 
suitable rescaling,  we obtain the canonical equation (\ref{eq:fow0}). 
\par
Some numerical computations have been performed by Fowler 
\cite{f1:f1,f2:f2}  and Alibaud, Azerad  and Is\`ebe \cite{aai:aai}.  Fowler 
mentions the fact that the numerical solution, computed with a pseudo-spectral 
method in a large domain, starting from random initial data, converges to a final 
state consisting of one travelling-wave.  Alibaud et al., using a finite difference 
scheme valid for a bounded  time interval,  starting from a compactly supported 
nonnegative initial data, showed that the  numerical solution of the Fowler 
equation (\ref{eq:fow0}) quickly evolves to a solution with  a non zero negative 
part, showing the erosive effect of the nonlocal term. They also establish 
theoretically the non monotone property of (\ref{eq:fow0}), namely the 
violation of the maximum principle  (see also Remark \ref{Remark:nonmonotone} below).
\par  
To the authors' knowledge, ours is the first study to report a 
rigorous mathematical proof of the existence of travelling-waves for dune 
morphodynamics.  We notice that we have not found nontrivial travelling-waves 
of the solitary-wave type for this model (see Remark \ref{R:Exist-soliton} below), 
however we could not exclude the possibility that they exist.  What we obtain is 
more bore-like travelling-waves.  This type of travelling dunes has not been 
observed yet, to the author's knowledge.  This may put under question the 
validity of the Fowler equation to faithfully describe dune morphodynamics.  The 
authors hope that these results could be of interest for geographers, geologists, 
oceanographers and others.

\subsection{Organization of the paper}
In Section \ref{section:travelling} we study the existence of travelling-wave 
solutions to equation (\ref{eq:fow0}).  The main result 
of this section is Theorem \ref{theo:exist} which implies that for each  
wave speed $d>0$, and $\eta$ in a neighborhood of zero, $\eta \in \R$, there 
exists a travelling-wave solution $u(x,t)=\phi(x-dt)$ to the following 
version of equation (\ref{eq:fow0}) 
\begin{equation*}
  \frac{\partial u}{\partial t} (x,t) + \frac{\partial}{\partial x} \Big{[} 
   \frac{u^2}{2}(x,t) - \frac{\partial u}{\partial x}(x,t) + \eta \int_0^{+\infty} 
  \xi^{-1/3} \frac{\partial u}{\partial x} (x-\xi, t) d\xi \Big{]}=0,
\end{equation*}
where $\phi \in C^1_b(\R)$; the idea of its proof is to use the implicit 
function theorem on suitable Banach spaces.  Then, by a scaling argument and 
considering a suitable translation of the travelling-wave, we extend this 
result for any $\eta \in \R$ and any wave speed $d\in \mathbb{R}$.
%we extend this for any $\eta \in \R$, and wave speed $d>0$ belonging to 
%an open interval of the form $(c/\lambda_0,+\infty)$, where 
%$c$ is an arbitrary positive number, and $\lambda_0$ is a positive function 
%depending on the parameters $(\eta,c)$.

Section \ref{section:local} is devoted to proving local well-posedness (LWP) 
for the integral equation associated to the initial value problem (IVP) 
for equation (\ref{eq:fow0}).  Inspired by the regularity of the 
travelling-wave obtained in Section 2, we consider a suitable subspace of 
$C^1_b(\R)$.  The analysis of the linear equation associated to equation 
(\ref{eq:fow0}) is addressed in Sub-section \ref{subsection:linear}.   Next, 
in Sub-section \ref{subsection:local}, the main result of this section is 
stated in Theorem \ref{T:local}; it gives local-in-time existence of the solution of 
the integral equation associated to the IVP for equation (\ref{eq:fow0}), with initial 
data belonging to the subspace $X$  of $C^1_b(\R)$, where 
$$X :=  \{f \in C_b^1(\R) ; f' \text{ is uniformly continuous}  \}.$$

\subsection{Notations}

- We denote by $\R$ and $\C$ the sets of all real and complex 
  numbers respectively. $\N$ denotes the set of all natural numbers.\\
- We denote by $C(c_1, c_2, \ldots)$ a constant which depends on the 
  parameters $c_1, c_2, \ldots$  $C$ is assumed to be a non-decreasing 
  function of its arguments. \\
- The norm of a measurable function $f\in L^p(\Omega)$, for $\Omega$ a 
  subset of $\mathbb R$, is written
  $\|f\|^p_{L^p(\Omega)}=\int_{\Omega}|f|^pdx$ for $1\le p < +\infty$, and 
  $\|f\|_{L^\infty(\Omega)}=\text{ess sup}_{\Omega}|f|$. The inner
  product of two functions $f,g \in L^2(\Omega) $ is written as
  $(f,g)=\int_{\Omega}f\bar gdx$.  We will often omit set $\Omega$ when 
  context is clear.\\
- We denote by $\hat f = \F f$ the Fourier transform of $f$ ($\F ^{-1}$ and 
  $\; \check{} \;$  are used to denote the inverse of the Fourier transform), where 
  $\hat f (\xi) := \frac{1}{\sqrt{2\pi}} \int e^{-i\xi x}f(x)dx$  
  for $f \in L^1(\R)$ (it follows that $\widehat{f * g}= \sqrt{2\pi} 
  \hat f \hat g$ for $f, g \in L^1(\R)$). \\
- The Schwartz space of rapidly decreasing functions on $\R$ is denoted 
  $\mathcal S(\R)$. \\
- We denote $\Lambda:=(1-\partial^2_x)^{1/2}$ and $H^s(\R)$ ($s\in\R$)
  the usual Sobolev space 
  $H^s(\R)=\{u\in {\mathcal S}'(\R), \|u\|_{H^s}<\infty\}$, 
  where $\| u\|_{H^s}=\| \Lambda^s u\|_{L^2}$. \\
- Let $\Omega \subset \R$.  $C^0(\Omega)= C(\Omega)$ is used to denote the space 
  of all continuous complex-valued functions on $\Omega$.  Moreover, 
  $C^k(\Omega) = \{u:\Omega \mapsto \C \; ; \; u, u', \ldots, u^{(k)}
  \in C^0(\Omega)\}$, for $k \in \N$.  We write $C^{\infty} (\Omega)$ to denote 
  the set of infinitely differentiable complex-valued functions on $\Omega$.  Similarly, 
  we use the notations $C^0(\Omega;Y)= C(\Omega;Y), C^k(\Omega; Y), C^\infty (\Omega;Y)$ 
  when functions take values in the Banach space $Y$.\\
- We write $C_\infty(\R)$ to denote the space of all continuous complex-valued 
  functions defined on $\R$ which tend to zero at infinity.\\
- We denote by $C_b(\R)=C_b^0(\R)$ the space of all bounded continuous real-valued 
  functions on $\R$ with the norm $\| \cdot \|_{L^\infty}$.  Moreover, for every 
  $k \in \N$, we write 
   $$
    C_b^k(\R):= \{f\in C^k(\R) \; ; \; f, f', \ldots, f^{(k)} \in C_b(\R) \},
   $$
  where $\|f\|_{C_b^k} := \sum_{i=0}^k\| f^{(i)} \|_{L^\infty}$, for all 
  $f \in C_b^k(\R)$. \\
- If $X$ and $Y$ are two Banach spaces, we denote by ${\mathfrak L}(X,Y)$ 
  the set of all continuous linear mappings defined on $X$ 
  with values in $Y$; if $X=Y$, we denote by $\mathfrak{L}(X)$.

%%%%%%%%%%%%%%%%%%%%%%%%%%%%%%%%%%%%%%%%%%%%%%%%%%%%%%%%%%%%%%%%%%%%%%%%%
% Existence
%%%%%%%%%%%%%%%%%%%%%%%%%%%%%%%%%%%%%%%%%%%%%%%%%%%%%%%%%%%%%%%%%%%%%%%%%
\section{Existence of  Travelling-Wave Solutions of the
Fowler Equation}\label{section:travelling}
We begin this section with some notations and preliminary results.
We define 
\begin{equation}\label{eq:psi}
 \psi (x):= \chi_{(0,\infty)}(x) \cdot x^{-1/3}, 
 \;\; \text{for all} \;\; x \in \R, 
\end{equation}
where $\chi_A$ is used to denote the characteristic 
function of the set $A$.  We also define 
\begin{equation}\label{eq:g[u]}
 g[u]:= \psi * \partial_x u.
\end{equation}   
We note that, since $\psi \in {\mathcal S}'(\R)$, 
it follows that for $\phi \in \mathcal S(\R)$, 
one has that $\psi * \phi \in C^\infty(\R)\cap{\mathcal S}'(\R)$ and  
$\widehat{\psi * \phi} = \sqrt{2\pi} \hat \psi \hat\phi$ (see Rudin \cite{r:r}).   
Then,  for $\varphi \in \mathcal{S}(\R)$, $g[\varphi] (\cdot)= 
\psi * \partial_x \varphi (\cdot)= \int_0^{+\infty} \xi^{-1/3} 
\partial_x \varphi (\cdot-\xi) d\xi$. Next lemma gives the 
Fourier transform of function $\psi$.
%%%%%%%%%%%%%%%%%%%%%%%%%%%%%%%%%%%%%%%%%%%%%%%%%%%%%%%%%%%%%%%%%%%%%%%%%
\begin{Lemma}\label{lema:foupsi} 
For the function $\psi$ defined by (\ref{eq:psi}) we have  
\begin{equation}\label{eq:fourier}
  \hat \psi (\xi) = \frac{1}{\sqrt{2 \pi}} \Gamma\Big(\frac23\Big) 
  \Big(\frac12 -i \frac{\sqrt 3}{2} \text{sgn}(\xi) \Big)
  |\xi|^{-2/3},
\end{equation}
where 
\begin{equation*}
\text{sgn}(\xi)  =\left\{
  \begin{array}
  [c]{r}
  -1,\text{ } \xi <0,\\
  1,\text{ } \xi>0,
  \end{array}
\right.
\end{equation*}
and $\Gamma$ is the gamma function.
\end{Lemma}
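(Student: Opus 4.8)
The plan is to compute $\hat\psi$ by a standard regularization-and-limit argument in $\mathcal{S}'(\R)$, reducing everything to the classical Gamma-function integral together with a careful determination of a complex power. Since $\psi$ is only a tempered distribution (it fails to be integrable at infinity), I would first introduce, for $\epsilon>0$, the damped functions $\psi_\epsilon(x):=\chi_{(0,\infty)}(x)\,x^{-1/3}e^{-\epsilon x}$. Each $\psi_\epsilon$ lies in $L^1(\R)$, because $x^{-1/3}$ is integrable near the origin (the exponent $-1/3$ exceeds $-1$) and the factor $e^{-\epsilon x}$ forces decay at infinity, so $\hat\psi_\epsilon$ is given by an absolutely convergent integral.

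The second step is the explicit evaluation of this integral. Writing $p=\epsilon+i\xi$, one has $\hat\psi_\epsilon(\xi)=\frac{1}{\sqrt{2\pi}}\int_0^\infty x^{-1/3}e^{-px}\,dx$. For real $p>0$ this equals $\frac{1}{\sqrt{2\pi}}\Gamma(2/3)\,p^{-2/3}$ by the definition of the Gamma function (with $s=2/3$, since $x^{s-1}=x^{-1/3}$); as both sides are holomorphic in $p$ on the right half-plane $\{\mathrm{Re}\,p>0\}$, the identity persists there by analytic continuation. Hence $\hat\psi_\epsilon(\xi)=\frac{\Gamma(2/3)}{\sqrt{2\pi}}(\epsilon+i\xi)^{-2/3}$, with the principal branch of the power.

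Third, I would pass to the limit $\epsilon\to0^+$. On the one hand, for every test function $\phi\in\mathcal{S}(\R)$ dominated convergence gives $\langle\psi_\epsilon,\phi\rangle\to\langle\psi,\phi\rangle$ (the dominating function $x^{-1/3}|\phi(x)|$ being integrable on $(0,\infty)$), so $\psi_\epsilon\to\psi$ in $\mathcal{S}'(\R)$ and therefore $\hat\psi_\epsilon\to\hat\psi$ by continuity of the Fourier transform on $\mathcal{S}'(\R)$. On the other hand, using the uniform bound $|(\epsilon+i\xi)^{-2/3}|=(\epsilon^2+\xi^2)^{-1/3}\le|\xi|^{-2/3}$ together with the local integrability of $|\xi|^{-2/3}$, a second dominated-convergence argument shows that $\hat\psi_\epsilon$ converges in $\mathcal{S}'(\R)$ to the locally integrable function $\xi\mapsto\frac{\Gamma(2/3)}{\sqrt{2\pi}}(i\xi)^{-2/3}$. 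By uniqueness of the limit, $\hat\psi(\xi)=\frac{\Gamma(2/3)}{\sqrt{2\pi}}(i\xi)^{-2/3}$.

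Finally, it remains to unwind the complex power, and this is the step that demands the most care. For $\xi>0$ one has $i\xi=|\xi|e^{i\pi/2}$, so the principal branch gives $(i\xi)^{-2/3}=|\xi|^{-2/3}e^{-i\pi/3}=|\xi|^{-2/3}(\tfrac12-i\tfrac{\sqrt3}{2})$, whereas for $\xi<0$ one has $i\xi=|\xi|e^{-i\pi/2}$ and thus $(i\xi)^{-2/3}=|\xi|^{-2/3}e^{i\pi/3}=|\xi|^{-2/3}(\tfrac12+i\tfrac{\sqrt3}{2})$; the two cases combine into $(i\xi)^{-2/3}=|\xi|^{-2/3}\big(\tfrac12-i\tfrac{\sqrt3}{2}\,\mathrm{sgn}(\xi)\big)$, which is exactly (\ref{eq:fourier}). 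The main obstacle is the bookkeeping on the branch cut: one must verify that as $\epsilon\to0^+$ the point $\epsilon+i\xi$ approaches $i\xi$ from within the right half-plane, where the principal branch is continuous, so that the correct argument ($+\pi/2$ for $\xi>0$, $-\pi/2$ for $\xi<0$) is selected and the sign function emerges with the stated coefficient.
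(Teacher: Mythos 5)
Your proof is correct, and it takes a genuinely different route from the paper's. The paper regularizes $\psi$ by spatial truncation, $\psi_n=\chi_{(0,n)}x^{-1/3}$, rescales the resulting integral via $u=\xi x$, and then quotes the classical oscillatory integrals $\int_0^{+\infty}\cos x\,x^{-1/3}dx=\frac12\Gamma(\frac23)$ and $\int_0^{+\infty}\sin x\,x^{-1/3}dx=\frac{\sqrt3}{2}\Gamma(\frac23)$, finishing with dominated convergence; all the complex-analytic content is hidden inside those quoted integrals. You instead use Abel regularization $\psi_\epsilon=\chi_{(0,\infty)}x^{-1/3}e^{-\epsilon x}$, evaluate $\hat\psi_\epsilon$ exactly as $\frac{\Gamma(2/3)}{\sqrt{2\pi}}(\epsilon+i\xi)^{-2/3}$ by analytic continuation of the Gamma integral into the right half-plane, and pass to the boundary $\epsilon\to0^+$ in $\mathcal S'(\R)$, with the uniform domination $|(\epsilon+i\xi)^{-2/3}|\le|\xi|^{-2/3}$ making both limiting arguments clean. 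What your approach buys is self-containedness: the two classical integrals the paper cites are in effect \emph{derived} (they are the real and imaginary parts of your boundary value $\Gamma(\frac23)(i\xi)^{-2/3}$ at $\xi=1$), and the absolute convergence supplied by $e^{-\epsilon x}$ avoids having to control the partial oscillatory integrals $\int_0^{nx}e^{-iu}u^{-1/3}du$ uniformly in $n$ and $x$, which is the one slightly delicate point in the paper's argument. What the paper's route buys is brevity and purely real-variable bookkeeping, at the cost of taking the Fresnel-type integrals as known; your branch-cut analysis (approaching $i\xi$ from $\mathrm{Re}\,p>0$, so the arguments $\pm\pi/2$ are selected) is exactly the point where the $\mathrm{sgn}(\xi)$ in \eqref{eq:fourier} is forced, and you have handled it correctly.
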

\begin{proof}
We define the function $\psi_n (x) := \chi_{(0,n)}(x) x^{-1/3}$, for all 
$x \in \R$, and $n \in \N$.  It is not difficult to see 
that $\psi_n \rightarrow \psi$ in ${\mathcal S}'(\R)$ as $n$ 
goes to infinity.  Let $\varphi \in {\mathcal S}(\R)$.   Then
\begin{eqnarray*}
  \langle \hat \psi_n, \varphi \rangle 
  &=& \frac{1}{\sqrt{2\pi}} \int \Big[ 
  \int_0^n \frac{\cos (\xi x)}{\xi^{1/3}} d\xi 
  -i \int_0^n \frac{\sin (\xi x)}{\xi^{1/3}} d\xi \Big] 
  \varphi (x) dx\\
  &=& \frac{1}{\sqrt{2\pi}} \int x^{-2/3} \Big[ 
  \int_0^{nx} \frac{\cos (u)}{u^{1/3}} du 
  -i \int_0^{nx} \frac{\sin (u)}{u^{1/3}} du \Big] 
  \varphi (x) dx.
\end{eqnarray*}
Since 
\begin{equation*}
  \int_0^{+\infty} \frac{\cos x}{x^{1/3}} dx = \frac12 \Gamma\Big(\frac23\Big), \;\;
  \text{and} \;\; 
  \int_0^{+\infty} \frac{\sin x}{x^{1/3}} dx = \frac{\sqrt 3}{2} 
  \Gamma\Big(\frac23\Big), 
\end{equation*}
it follows that 
\begin{equation*}
  \Big|x^{-2/3} \int_0^{nx} \frac{e^{-iu}}{u^{1/3}} du \; 
  \varphi(x) \Big| \le C |x|^{-2/3} |\varphi(x)|,
\end{equation*}
for all $n \in \N$, and $x \in \R$.  Therefore, the dominated convergence 
theorem implies that 
\begin{equation*}
 \lim_{n \rightarrow \infty} \langle \hat \psi_n, \varphi \rangle 
 = \frac{1}{\sqrt{2\pi}} \int x^{-2/3} \Gamma \Big( \frac23 \Big) 
   \Big(\frac12 - i \frac{\sqrt 3}{2} \text{sgn} (x)  \Big) \varphi(x) dx.
\end{equation*}
This completes the proof of the lemma.
\end{proof}
%%%%%%%%%%%%%%%%%%%%%%%%%%%%%%%%%%%%%%%%%%%%%%%%%%%%%%%%%%%%%%%%%%%%%%%%%
\begin{Remark}\label{R:def}
Let $s \in \R$. If $u \in H^s(\R)$, one can define $g[u]$  
through its Fourier transform by 
\begin{equation}\label{eq:fourierg}
\widehat{g[u]}(\xi) := \Gamma \Big( \frac23 \Big) \Big( 
\frac{\sqrt 3}{2} \text{sgn} (\xi)+\frac{i}{2} \Big) \xi^{1/3} \hat u (\xi),
\end{equation} 
for almost every $\xi \in \R$.  Thus, if $u \in H^s(\R)$, it follows 
that  $g[u] \in H^{s-1/3}(\R)$ and 
$ \| g[u] \|_{H^{s-1/3}} \le \Gamma \Big( \frac23 \Big) \|u\|_{H^s}$.  
\end{Remark}
%%%%%%%%%%%%%%%%%%%%%%%%%%%%%%%%%%%%%%%%%%%%%%%%%%%%%%%%%%%%%%%%%%%%%%%%%
In this section we consider the following, more general, version 
of equation (\ref{eq:fow0}):
\begin{equation}\label{eq:fow}
  \partial_t u (x,t)+ \partial_x \Big ( \frac{u^2}{2}  
  - \partial_x u + \eta \; g[u]\Big) (x,t)=0,
\end{equation}
where $\eta \in \R$.  We will show existence of travelling-wave 
solutions to equation (\ref{eq:fow}), for any $\eta \in \R$.  First, 
we consider the case $\eta=0$.  For any $d\in \R$ 
(see Johnson \cite{j:j}), 
\begin{equation}\label{eq:eta=0}
 u_d(x,t)= \frac{d}{2} \Big[ 1-\tanh \Big(\frac{d}{4} (x-\frac{d}{2}t) 
 \Big) \Big]
\end{equation}
is a solution to equation (\ref{eq:fow}) with $\eta=0$.
%%%%%%%%%%%%%%%%%%%%%%%%%%%%%%%%%%%%%%%%%%%%%%%%%%%%%%%%%%%%%%%%%%%%%%%%%
\begin{Remark}\label{R:scale}
Let $\lambda >0$.  We define
\begin{equation}\label{eq:scale} 
  u_\lambda (x,t) := \frac{1}{\lambda} u\Big(\frac{x}{\lambda}, 
  \frac{t}{\lambda^2} \Big), \;\;\text{for}  \;\; x \in \R, 
  \;\; \text{and}\;\; t \ge 0.  
\end{equation}
It is straightforward to check that if $u$ is a solution to the equation
\begin{equation}\label{eq:fow1}
  \partial_t u (x,t)+ \partial_x \Big ( \frac{u^2}{2}   
  - \partial_x u + \lambda^{2/3} \eta \; g[u] \Big)(x,t)=0,
 \end{equation}
then $u_\lambda$ satisfies equation (\ref{eq:fow}).  Hence, if 
$\phi$ is a travelling-wave solution of equation (\ref{eq:fow1}) 
with speed $c$, then $\phi_\lambda (\cdot)= \frac{1}{\lambda} 
\phi(\frac{1}{\lambda} \cdot)$ is a travelling-wave solution of 
equation (\ref{eq:fow}) with speed $c/\lambda$.
\end{Remark}
%%%%%%%%%%%%%%%%%%%%%%%%%%%%%%%%%%%%%%%%%%%%%%%%%%%%%%%%%%%%%%%%%%%%%%%%%
We define, for $c \in \R$, the functions 
\begin{equation}\label{eq:gc}
  g_c(x):= c \Big(1- \tanh \Big( \frac{c}{2} x \Big)  \Big), \;\;\;\text{and} \;\;\;
  h_c(x) := g_c'(x) = -\frac{c^2}{2} \text{sech}^2 \Big( \frac{c}{2} x \Big).
\end{equation}  
\begin{Remark}\label{R:gc} 
Let $c \in \R$.  We see that $g[g_c]=I_1+I_2$, 
where $I_j := \psi_j * h_c$ for $j=1,2$, with 
$\psi_1:= \psi \cdot \chi_{(0,1)}$, and  
$\psi_2 := \psi \cdot \chi_{(1,+\infty)}$.  Now, we state 
some immediate properties of the function $g[g_c]$.\\
{\bf{a.)}} Let $p>3$.  Since $\psi_1 \in L^1(\R)$, and $\psi_2\in L^p(\R)$, it 
follows from the Young inequality for convolution that $g[g_c] \in L^p(\R)$. \\
{\bf{b.)}} Furthermore,  $g[g_c] \in C_\infty(\R)$.  In fact, it follows from 
the dominated convergence theorem that $I_1$ is continuous and 
$I_1(x) \rightarrow 0$ as $|x| \rightarrow \infty$.  Moreover, H\"older's 
inequality and the dominated convergence theorem imply that
\begin{equation*}
 I_2(x) \le C(c) \Big( \int_1^{+\infty}  
 \frac{\text{sech}^4(\frac{c}{2} (x-\xi))}{\xi^{4/3}} d\xi \Big)^{1/4} 
 \rightarrow 0, \; \text{as} \; |x|\rightarrow +\infty.
\end{equation*}
The continuity of $I_2$ is shown similarly to the continuity of $I_1$.
\end{Remark}

%%%%%%%%%%%%%%%%%%%%%%%%%%%%%%%%%%%%%%%%%%%%%%%%%%%%%%%%%%%%%%%%%%%%%%%%%
Let $c\in \R$.  In the sequel, we will consider the following spaces:
\begin{eqnarray*}
  \X =\X_c &:=&  \Big\{ \varphi \in C_b^1(\R) \; \; ; \;\;  
  \int \varphi' h_c' \; dx=0\Big\},  \\ 
  \tilde \X = \tilde \X_c&:=& \Big\{ g_c+\varphi \; \; ; \;\; \varphi \in \X \Big\}, 
\end{eqnarray*}
where $\| \cdot \|_{\X} = \- \| \cdot \|_{C_b^1}$.  One sees that 
$(\X, \| \cdot \|_{\X})$ is a Banach space.
%%%%%%%%%%%%%%%%%%%%%%%%%%%%%%%%%%%%%%%%%%%%%%%%%%%%%%%%%%%%%%%%%%%%%%%%%
\begin{Remark}\label{R:property}
Assume that $\varphi \in C_b^1(\R)$.  By integration by parts one has that 
\begin{equation}\label{eq:new}
   g[\varphi](x) = \int_0^1 \frac{1}{\xi^{1/3}} \varphi'(x-\xi) d\xi
   +\varphi(x-1) - \frac13 
   \int_1^{+\infty} \frac{1}{\xi^{4/3}} \varphi(x-\xi) d\xi.
\end{equation}
Then $g[\varphi] \in C_b(\R)$.  Moreover,  
\begin{equation}\label{eq:ineqnew}
  \| g[\varphi] \|_{L^{\infty}} \le C   \; \| \varphi \|_{C_b^1}.
\end{equation}
Hence, if $\phi \in \tilde \X$, it follows from Remark \ref{R:gc}-{\bf{b.)}}  
above that $g[\phi] \in C_b(\R)$.
\end{Remark}
%%%%%%%%%%%%%%%%%%%%%%%%%%%%%%%%%%%%%%%%%%%%%%%%%%%%%%%%%%%%%%%%%%%%%%%%%

Suppose now that $u(x,t)=\phi(x-ct)$ is a solution to 
equation (\ref{eq:fow}), where $\phi \in \tilde \X$.  Then  
$-c \phi' + \frac{d}{dx}(\frac{\phi^2}{2}-\phi'+\eta g[\phi])=0$.  Thus,  
a sufficient condition to guarantee that $\phi$ satisfies the last 
equation is 
\begin{equation}\label{eq:travelling1}
   F(\eta,\phi)= F_c(\eta,\phi):= c\phi - \frac{\phi^2}{2}+\phi' -\eta g[\phi] =0.
\end{equation}
We denote by $\tau_c$ the function given by   
$\tau_c(x) := c-g_c(x)= c \; \text{tanh}(\frac{c}{2}x)$, for $x \in \R$.  
We now define the function $G=G_c$, which is well defined on $\R \times \X$ 
by Remarks \ref{R:gc} and \ref{R:property} above, as
\begin{eqnarray}\label{eq:travelling2}
   G:\R \times \X &\mapsto& C_b(\R) \nonumber \\
   (\eta,\varphi)    &\mapsto&
   G(\eta,\varphi) = \tau_c \varphi - \frac{\varphi^2}{2} +\varphi'
   -\eta g[\varphi] -\eta g[g_c].
\end{eqnarray}
Assume that $\phi= \varphi+g_c \in \tilde \X$.  Since $F(0,g_c)=0$, it 
follows that $F(\eta, \phi) = G(\eta,\varphi)$.  Hence, $\phi$ satisfies 
equation  (\ref{eq:travelling1}) if and only if $\varphi$ verifies the equation 
$G(\eta,\varphi)=0$.

The following theorem  implies the existence of a travelling-wave 
solution, $u(x,t)=\phi(x-ct)$ with $c>0$ and $\phi \in \tilde \X$, to 
equation (\ref{eq:fow}) for $\eta$ in a neighborhood of zero; its proof 
uses the implicit function theorem. 
%%%%%%%%%%%%%%%%%%%%%%%%%%%%%%%%%%%%%%%%%%%%%%%%%%%%%%%%%%%%%%%%%%%%%%%%%
\begin{Theorem}\label{theo:exist}
Suppose $c >0$.  Then there exist $\delta, \delta_0>0$ such that 
for every $\eta \in (-\delta,\delta)$, there is exactly one 
$\varphi_\eta=\varphi_{\eta,c} \in \X$ for which 
$\|\varphi_\eta\|_{\X} \le \delta_0$ and $G(\eta,\varphi_\eta)=0$.  Moreover, 
the mapping $\eta \mapsto \varphi_\eta$ is a $C^{\infty}$-map on a 
neighborhood of $0$. 
\end{Theorem}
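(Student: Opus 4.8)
The plan is to apply the implicit function theorem for $C^\infty$ maps between Banach spaces to the map $G$ of \eqref{eq:travelling2} at the point $(\eta,\varphi)=(0,0)$. Note first that $G(0,0)=\tau_c\cdot 0-0+0-0-0=0$, so $(0,0)$ is already a solution. Differentiating $G(\eta,\varphi+h)-G(\eta,\varphi)=\tau_c h-\varphi h-\tfrac{h^2}{2}+h'-\eta\, g[h]$ and specializing, the partial differential with respect to $\varphi$ at $(0,0)$ is
\begin{equation*}
  L\varphi := D_\varphi G(0,0)\varphi = \tau_c\varphi+\varphi'.
\end{equation*}
The whole proof reduces to checking the two hypotheses of the theorem: that $G$ is smooth near $(0,0)$, and that $L:\X\to C_b(\R)$ is a Banach-space isomorphism.

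Smoothness of $G$ is routine: $G$ is a polynomial of degree two in $(\eta,\varphi)$. Indeed $\varphi\mapsto\tau_c\varphi$ is multiplication by the bounded function $\tau_c$, hence a bounded linear map $\X\to C_b(\R)$; $\varphi\mapsto\varphi'$ is bounded linear by the definition of $\|\cdot\|_{C_b^1}$; $\varphi\mapsto\varphi^2$ is smooth because $(u,v)\mapsto uv$ is bounded bilinear on $C_b(\R)$; $(\eta,\varphi)\mapsto\eta\,g[\varphi]$ is bounded bilinear by \eqref{eq:ineqnew}; and $\eta\mapsto\eta\,g[g_c]$ is linear with $g[g_c]\in C_b(\R)$ by Remark \ref{R:gc}. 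Hence $G$ is of class $C^\infty$.

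The key step is that $L$ is an isomorphism. I would first regard $L$ as a bounded operator $C_b^1(\R)\to C_b(\R)$. The homogeneous equation $\varphi'+\tau_c\varphi=0$ is a first-order linear ODE; since $\int_0^x\tau_c(s)\,ds=2\log\cosh(\tfrac{c}{2}x)$, its integrating factor is $\mu(x)=\cosh^2(\tfrac{c}{2}x)$ and its solution space is $\R h_0$ with $h_0=\mu^{-1}=\text{sech}^2(\tfrac{c}{2}x)\in C_b^1(\R)$, noting $h_0=-\tfrac{2}{c^2}h_c$. Given $f\in C_b(\R)$, the function $\varphi(x)=\text{sech}^2(\tfrac{c}{2}x)\int_0^x\cosh^2(\tfrac{c}{2}s)f(s)\,ds$ solves $L\varphi=f$; using $\int_0^x\cosh^2(\tfrac{c}{2}s)\,ds=\tfrac{x}{2}+\tfrac{\sinh(cx)}{2c}$ and the exponential decay of $\text{sech}^2$, one checks $\varphi$ is bounded, whence $\varphi'=f-\tau_c\varphi$ is bounded too and $\varphi\in C_b^1(\R)$. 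Thus $L:C_b^1(\R)\to C_b(\R)$ is a bounded surjection with one-dimensional kernel $\R h_0$. The constraint defining $\X$ is designed exactly to remove this kernel: writing $\ell(\varphi):=\int\varphi' h_c'\,dx$, this is a bounded functional on $C_b^1(\R)$ since $|\ell(\varphi)|\le\|h_c'\|_{L^1}\|\varphi\|_{C_b^1}$, and $\X=\ker\ell$. Because $h_0\propto h_c$ and $h_c'\not\equiv0$, we get $\ell(h_0)=-\tfrac{2}{c^2}\int(h_c')^2\,dx\neq0$, so $C_b^1(\R)=\X\oplus\R h_0$. Consequently $L|_{\X}$ is injective (as $\X\cap\R h_0=\{0\}$) and surjective (as $L(C_b^1(\R))=L(\X)+L(\R h_0)=L(\X)$), hence a continuous bijection between Banach spaces, so an isomorphism by the bounded inverse theorem.

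With $G(0,0)=0$, $G\in C^\infty$, and $D_\varphi G(0,0)=L$ invertible, the implicit function theorem yields $\delta,\delta_0>0$ and a $C^\infty$ map $\eta\mapsto\varphi_\eta$ on $(-\delta,\delta)$ with $\varphi_0=0$ and $G(\eta,\varphi_\eta)=0$, uniquely characterized among $\|\varphi\|_{\X}\le\delta_0$, which is the assertion. I expect the main obstacle to be precisely the ODE analysis in the third paragraph: verifying that the particular solution and its derivative are genuinely bounded (surjectivity of $L$ onto $C_b(\R)$) and confirming the transversality $\ell(h_0)\neq0$, which is what makes the side constraint in $\X$ cancel the one-dimensional translation kernel and turn $L$ into an isomorphism.
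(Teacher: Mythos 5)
Your proof is correct, and its skeleton is the same as the paper's: apply the implicit function theorem to $G$ at $(0,0)$, get smoothness from the fact that $G$ is a quadratic polynomial in $(\eta,\varphi)$ (so all derivatives of order $\ge 3$ vanish), and reduce everything to the bijectivity of the linearization $L=\tau_c+\partial_x:\X\to C_b(\R)$, which both you and the paper establish through the same ODE ingredients (integrating factor $\cosh^2(\tfrac{c}{2}x)$, variation of parameters, and the boundedness estimate for $\mathrm{sech}^2(\tfrac{c}{2}x)\int_0^x\cosh^2(\tfrac{c}{2}s)f(s)\,ds$). Where you genuinely diverge is in how the constraint $\int\varphi' h_c'\,dx=0$ defining $\X$ is exploited. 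The paper works inside $\X$ throughout: injectivity by plugging the homogeneous solution into the constraint, and surjectivity by choosing, for each $y\in C_b(\R)$, the explicit coefficient $\lambda$ in (\ref{eq:lambda}) so that the variation-of-parameters solution (\ref{eq:sobrej}) lands in $\X$ --- which forces the paper to spend several integral estimates verifying that the formula for $\lambda$ converges. You instead work in $C_b^1(\R)$ first: identify $\ker L=\R h_0$, prove $L:C_b^1(\R)\to C_b(\R)$ is onto, note that $\ell(\varphi)=\int\varphi'h_c'\,dx$ is a bounded functional on $C_b^1(\R)$ with $\ell(h_0)=-\tfrac{2}{c^2}\int(h_c')^2\,dx\neq 0$, and conclude $C_b^1(\R)=\X\oplus\R h_0$, whence $L|_{\X}$ is bijective and an isomorphism by the bounded inverse theorem. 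This abstract splitting buys you something concrete: the finiteness of the projection coefficient (the paper's $\lambda$) is automatic from the boundedness of $\ell$ on $C_b^1(\R)$, so the paper's estimates surrounding (\ref{eq:lambda}) become unnecessary. What the paper's route buys in exchange is an explicit closed-form solution operator on $\X$ itself; and since any Banach-space implicit function theorem already requires the derivative to be a topological isomorphism (i.e., the open mapping theorem is implicit there too), your appeal to the bounded inverse theorem costs nothing extra.
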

\begin{proof}
Let $c>0$.  The mapping $G=G_c$ is defined on the Banach space 
$\R \times \X$ taking values in the Banach space 
$(C_b(\R), \| \cdot \|_{L^\infty})$, and satisfies $G(0,0)=0$.  

We now claim that $\partial_1 G$ and $\partial_2 G$ exist as 
partial F-derivatives (Fr\'echet derivative) on $\R \times \X$ and that 
the partial F-derivative $\partial_2 G(0,0):\X \mapsto C_b(\R)$ is bijective.  \\
In fact, let us take $(\eta, \varphi) \in \R \times \X$.   One can see 
that 
\begin{equation*}
  \partial_1 G(\eta,\varphi) \cdot 
   =-(g[\varphi] + g[g_c]) \cdot 
\end{equation*}
and 
\begin{equation}\label{eq:partial_2G}
  \partial_2 G(\eta,\varphi) \cdot 
  =(\tau_c -\varphi) \cdot +\partial_x \cdot 
  -\eta g[\cdot]. 
\end{equation}
Then $\partial_1 G(\eta,\varphi) \in \mathfrak L(\R, C_b(\R))$, and 
$\partial_2G(\eta,\varphi) \in \mathfrak L(\X,C_b(\R))$.  Moreover, 
we obtain that 
$\|\partial_1 G(\eta,\varphi)\|_{\mathfrak L(\R, C_b(\R))} \le C \cdot
(\|\varphi \|_{C^1_b} + \|g[g_c]\|_{L^\infty})$, and  
$\|\partial_2G(\eta,\varphi) \|_{\mathfrak{L}(\X,C_b(\R))} \le C \cdot 
(1+|\eta|+\| \tau_c - \varphi\|_{L^\infty})$, where we have 
used inequality (\ref{eq:ineqnew}).  Hence, $\partial_1G, \partial_2G$ 
exist as partial F-derivatives on $\R\times \X$. \\
We will now show that the partial F-derivative 
$ \partial_2G(0,0) =\tau_c+\partial_x:\X \mapsto C_b(\R)$ is bijective. 
We begin with the injectivity; we emphasize here that the definition of the 
space $\X \subset C^1_b(\R)$ was chosen to ensure the injectivity of 
the mapping $ \partial_2G(0,0)$.  Let $f$ be an element of $\X$ such that 
$\tau_c f +f'=0$.  By solving the last ordinary differential equation, one gets
\begin{equation*}
  f(x)=f(0) \; \cdot \; e^{-\int_0^x \tau_c(s)ds} 
  = f(0) \; \cdot \; \text{sech}^2\Big(\frac{c}{2}x\Big).
\end{equation*}
Since $f\in \X$, it follows that 
\begin{equation*}
  \int f'(x) h_c'(x) dx = -f(0)\frac{2}{c^2} \int (h_c')^2(x) dx =0.
\end{equation*}
Then $f(0)=0$, and therefore $f=0$. \\ 
We will now show that the mapping $\partial_2G(0,0)$ is onto.  Let $y$ be 
an element of $C_b(\R)$.  By the method of variation of parameters, we 
obtain that the function 
\begin{equation}\label{eq:sobrej}
  g(x) := \lambda l_c (x) + l_c (x)
  \int_0^x \frac{y(s)}{l_c(s)} ds
\end{equation}
is a solution to the equation $\tau_c g +g' = y$, for any  
$\lambda \in \R$, where $l_c:= -\frac{2}{c^2}h_c = 
\text{sech}^2(\frac{c}{2}x)$.  We will 
prove that $g \in \X$ for a suitably chosen real number $\lambda$.  
First, we remark that there exists a unique 
$\lambda=\lambda_{y,c} \in \R$ such that $\int g'h_c'dx$=0.  In  
fact take  
\begin{equation}\label{eq:lambda}
  \lambda := \frac{c^2}{2\int(h_c')^2(x)dx}
  \int \Big[ (h_c')^2(x) \int_0^x \frac{y(s)}{h_c(s)}ds
  +y(x) h_c'(x) \Big]dx,
\end{equation}
where we note that 
\begin{equation*}
  0< \int (h_c')^2(x)dx \le \frac{c^6}{4} \int
  \text{sech}^4 \Big(\frac{c}{2}x\Big) dx = C(c),  \;\;\;
  \int|h_c'(x)|dx =c^2, 
\end{equation*}
and
\begin{eqnarray*}
  && \int (h_c')^2(x) \Big| \int_0^x \frac{y(s)}{h_c(s)}ds \Big| dx 
     \le \frac{c^4}{2} \|y\|_{L^\infty} 
     \int \frac{\sinh^2(\frac{c}{2}x)}{\cosh^6(\frac{c}{2}x)}
     \Big| \int_0^x \frac{1+\cosh(cs)}{2} ds \Big| dx \\
  &&\le \frac{c^3}{4} \|y\|_{L^\infty} 
    \int \Big[ c|x| \text{sech}^4 \Big( \frac{c}{2}x \Big)  
    + 2 \; \text{sech}^2 \Big( \frac{c}{2}x \Big) \Big] dx
    \le C(c) \|y\|_{L^\infty}.
\end{eqnarray*}
It remains to show that $g$ given by (\ref{eq:sobrej}) and 
(\ref{eq:lambda}) belongs to $C_b^1(\R)$.  It is immediate to see that 
$g \in C(\R)$, we need to show that $g$ is bounded.  We have that 
\begin{eqnarray*}
   && \text{sech}^2 \Big( \frac{c}{2}x \Big) \Big| \int_0^x 
      \frac{y(s)}{\text{sech}^2(\frac{c}{2}s)} ds \Big| 
      \le \frac{\|y\|_{L^\infty}}{1+\cosh(cx)} 
      \Big|\int_0^x (1+\cosh(cs)) ds  \Big| \\
   && \le \|y\|_{L^\infty} \Big( 
      \frac{|x|}{1+\cosh(cx)} +\frac{1}{c} |\tanh(cx)|\Big)
      \le C(c) \|y\|_{L^\infty}.
\end{eqnarray*}
Then $g \in C_b(\R)$.  Moreover, since $g$ satisfies the 
equation $\tau_c g +g'=y$, it follows that $g \in C_b^1(\R)$.  Hence,   
$g\in \X$.  Therefore, $\partial_2 G(0,0)$ is a surjective mapping. 

It is not difficult to see, by using inequality (\ref{eq:ineqnew}), 
that $G$, $\partial_1 G$ and $\partial_2 G$ are continuous 
on $\R \times \X$.  Then, the implicit function theorem   
implies the first part of the theorem.  Furthermore, from (\ref{eq:travelling2}) 
one can see that function $G$ is quadratic in $\varphi$ and linear 
in $\eta$, therefore it is not difficult to verify that  
$\partial_{i,j}^2G(\eta,\varphi)$ is independent of 
$(\eta,\varphi) \in \R \times \X$, for all $i,j\in \{1,2\}$.  Hence,   
$\partial_{i_1,\ldots,i_k}^kG(\eta,\varphi)=0$ for all $k\ge3$, where 
$i_1,\ldots,i_k\in\{1,2\}$, and  $(\eta,\varphi) \in \R \times \X$.
Finally, the second part of the theorem is then a consequence of the fact 
that the mapping $G$ is a $C^\infty$-map on $\R \times \X$. 

\end{proof}
%%%%%%%%%%%%%%%%%%%%%%%%%%%%%%%%%%%%%%%%%%%%%%%%%%%%%%%%%%%%%%%%%%%%%%%%%
\begin{Corollary}
Let $\eta \in \R$ and $d\in \mathbb{R}$.  Then there is a travelling-wave 
solution $\tilde \phi  \in C^1_b(\R)$ of equation (\ref{eq:fow}) with speed $d$. 
%Then there exists $\lambda_0=\lambda_0(\eta,c)>0$  
%such that for every  $d \in (\frac{c}{\lambda_0}, +\infty)$, there is  
%a travelling-wave solution $\tilde \phi  \in C^1_b(\R)$ of equation 
%(\ref{eq:fow}) with speed $d$.
\end{Corollary}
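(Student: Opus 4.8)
The plan is to start from the local existence result of Theorem \ref{theo:exist}, which produces a travelling-wave of (\ref{eq:fow}) only for small $|\eta|$ and a prescribed positive speed, and then to enlarge its reach with two changes of variables: the scaling of Remark \ref{R:scale}, which tunes the magnitude of the nonlocal coefficient, and a Galilean-type transformation $u(x,t)\mapsto u(x-kt,t)+k$, which tunes the speed to an arbitrary real number. First I would dispose of the degenerate case $\eta=0$: here formula (\ref{eq:eta=0}) already exhibits an explicit travelling-wave of (\ref{eq:fow}) lying in $C^1_b(\R)$, and since its wave speed is half of its free parameter, a suitable choice of that parameter realizes any prescribed $d\in\R$. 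So from now on assume $\eta\neq 0$.

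Second, I would fix a convenient positive speed, say $c=1$, and let $\delta=\delta(1)>0$ be the radius given by Theorem \ref{theo:exist}. Choosing $\tilde\eta$ nonzero, with $|\tilde\eta|<\delta$ and of the same sign as $\eta$, the theorem furnishes a profile $\phi\in\tilde\X\subset C^1_b(\R)$ such that $\phi(x-t)$ solves (\ref{eq:fow}) with nonlocal coefficient $\tilde\eta$ and speed $1$. I would then invoke Remark \ref{R:scale} with its parameter taken equal to the target $\eta$: setting $\lambda:=(\tilde\eta/\eta)^{3/2}$, which is a genuine positive real precisely because $\tilde\eta$ and $\eta$ share the same sign, one has $\lambda^{2/3}\eta=\tilde\eta$, so the equation solved by $\phi$ is exactly (\ref{eq:fow1}) for this $\lambda$. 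Remark \ref{R:scale} then yields that $\phi_\lambda(\cdot)=\frac1\lambda\phi(\frac1\lambda\cdot)$, as in (\ref{eq:scale}), is a travelling-wave of (\ref{eq:fow}) with coefficient $\eta$ and speed $1/\lambda>0$; boundedness of $\phi$ and $\phi'$ is clearly preserved under (\ref{eq:scale}), so $\phi_\lambda\in C^1_b(\R)$.

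Third, to reach an arbitrary speed $d\in\R$ I would use the invariance of (\ref{eq:fow}) under $u(x,t)\mapsto u(x-kt,t)+k$, $k\in\R$. A direct substitution shows this maps solutions to solutions: the nonlocal term transforms covariantly, since adding a constant does not affect $g[\cdot]=\psi*\partial_x(\cdot)$ and the inner shift of frame only translates the argument, so that the value of $g$ at $(x,t)$ for the new field equals that of $g[u]$ at $(x-kt,t)$; meanwhile the two extra first-order terms generated by the moving frame and by the constant $k$ inside $u^2/2$ cancel exactly. Applying this to $\phi_\lambda(x-(1/\lambda)t)$ with $k:=d-1/\lambda$ produces the travelling-wave $\tilde\phi(x-dt)$ with profile $\tilde\phi:=\phi_\lambda+(d-1/\lambda)$, which still belongs to $C^1_b(\R)$ and has exactly the prescribed speed $d$ and coefficient $\eta$.

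The only genuinely new computation is the verification of the Galilean invariance in the third step, and the single point there that needs care is checking the covariance of the nonlocal term; the rest is bookkeeping of scaling factors. Accordingly, I expect no serious obstacle beyond two routine checks: that the sign agreement between $\tilde\eta$ and $\eta$ makes $\lambda$ a positive real, so that the scaling (\ref{eq:scale}) is admissible, and that each of the three operations preserves membership in $C^1_b(\R)$.
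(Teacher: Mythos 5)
Your proof is correct and follows essentially the same route as the paper: apply Theorem \ref{theo:exist} to obtain a wave for a small nonlocal coefficient, rescale via Remark \ref{R:scale} to restore the prescribed $\eta$ at some positive speed, and then use the Galilean-type shift $u(x,t)\mapsto u(x-kt,t)+k$ to reach an arbitrary speed $d\in\R$. The only cosmetic differences are that you fix a small $\tilde\eta$ first and solve for $\lambda=(\tilde\eta/\eta)^{3/2}$, whereas the paper takes $\lambda$ small enough that $\lambda^{2/3}\eta\in(-\delta,\delta)$ (the same relation), and that your separate treatment of $\eta=0$ --- needed only because your formula for $\lambda$ degenerates there --- is absorbed uniformly into the paper's argument.
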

\begin{proof}
Let $c>0$.  By Theorem \ref{theo:exist} there exists 
$\lambda_0=\lambda_0(\eta,c)>0$ such that 
for every $\lambda \in (0,\lambda_0)$, there is a 
$\phi=\phi_{\lambda,\eta,c} \in C_b^1(\R)$ such that $u(x,t)=\phi(x-ct)$ is 
a solution to equation (\ref{eq:fow1}).  Now we can see, by using 
Remark \ref{R:scale},  that 
$\phi^{\dagger}(\cdot)=\frac{1}{\lambda}\phi (\frac{1}{\lambda}\cdot)$  
is a travelling-wave solution of equation (\ref{eq:fow}) with speed 
$c/\lambda \in (\frac{c}{\lambda_0}, +\infty)$.  The result now follows 
from the fact that if $\phi^{\dagger}(x-\tilde{c}t)$ is a solution of 
equation (\ref{eq:fow}) for some $\tilde c >0$, then 
$\tilde \phi(x,t):= \phi^{\dagger}(x-(\tilde{c}+k)t)+k$ is also a solution 
for all $k \in \mathbb{R}$.
\end{proof}

%%%%%%%%%%%%%%%%%%%%%%%%%%%%%%%%%%%%%%%%%%%%%%%%%%%%%%%%%%%%%%%%%%%%%%%%%
\begin{Remark}\label{R:Exist-soliton}Let us comment about the existence of solitary travelling waves.
Let us proceed formally at first.  By multiplying the equation 
$-c \phi' + \frac{d}{dx}(\frac{\phi^2}{2}-\phi'+\eta g[\phi])=0$
by  $\phi$, then integrating between $-\infty$ and $x$, assuming that 
$\phi, \phi' \rightarrow 0$ as $|x|\rightarrow +\infty$, we get 
$$
  -c \frac{\phi^2(x)}{2}+\frac{\phi^3(x)}{3}
  -\int_{-\infty}^{x} \phi(y) \phi''(y)dy 
  +\eta \int_{-\infty}^x \frac{dg[\phi]}{dy}(y) \phi(y)dy=0.
$$
Making $x\rightarrow +\infty$, integrating by parts, and then 
applying Parseval's relation and  Remark \ref{R:def}, we obtain
\begin{equation}\label{eq:soliton}
  \int_{-\infty}^{+\infty} \Big(\xi^2-\frac{\eta}{2} 
  \Gamma\Big(\frac23\Big) \xi^{4/3} \Big) 
  |\hat \phi (\xi)|^2 d\xi =0.
\end{equation}
These formal steps can be justified by assuming for instance that 
$\phi \in H^2(\mathbb{R})$.  Thus, equation (\ref{eq:soliton}) implies 
that if $\eta \le 0$, then $\phi=0$.  We can then conclude that there 
are no nontrivial travelling-waves of the solitary-wave type 
for equation (\ref{eq:fow}) when $\eta \le 0$.  However, in the 
physical case, that is to say when $\eta=1$ or more generally when 
$\eta>0$, equation (\ref{eq:soliton}) does not preclude the possibility that 
they may exist.
\end{Remark}

%%%%%%%%%%%%%%%%%%%%%%%%%%%%%%%%%%%%%%%%%%%%%%%%%%%%%%%%%%%%%%%%%%%%%%%%%
% Local Theory
%%%%%%%%%%%%%%%%%%%%%%%%%%%%%%%%%%%%%%%%%%%%%%%%%%%%%%%%%%%%%%%%%%%%%%%%%
\section{Local Theory in a subspace of $C^1_b(\R)$}\label{section:local}
In Section \ref{section:travelling}, we proved the existence of a 
travelling-wave solution $u(x,t)=\phi(x-ct)$ to equation (\ref{eq:fow}) for 
any $\eta \in \R$, where $c$ is an appropriate positive number and $\phi\in C^1_b(\R)$. 
Motivated by this last result, we will consider in this section the local 
well-posedness theory for the following initial value problem (IVP) 
\begin{equation}\label{eq:IVP}
 \left \{
   \begin{array}{l}
     \partial_t u (x,t)+ \partial_x \big ( \frac12 u^2   
     -\partial_x u +  g[u] \big)(x,t)=0, \\
     u(0)=u_0,
   \end{array}
  \right.
\end{equation}
where $g[u]$ is given by (\ref{eq:g[u]}), and $u_0$ belongs to a 
suitable subspace of $C^1_b(\R)$.  The Cauchy problem associated to the IVP 
(\ref{eq:IVP}) for initial data $u_0 \in L^2(\R)$ was recently studied by 
Alibaud, Azerad and Is\`ebe \cite{aai:aai}.

%%%%%%%%%%%%%%%%%%%%%%%%%%%%%%%%%%%%%%%%%%%%%%%%%%%%%%%%%%%%%%%%%%%%%%%%%
% The Linear Equation
%%%%%%%%%%%%%%%%%%%%%%%%%%%%%%%%%%%%%%%%%%%%%%%%%%%%%%%%%%%%%%%%%%%%%%%%%

\subsection{The Linear Equation}\label{subsection:linear}
First, we consider the linear part associated to the IVP (\ref{eq:IVP}), namely
\begin{equation}\label{eq:IVP-linear}
 \left \{
   \begin{array}{l}
     \partial_t u (x,t)  
     -\partial_x^2 u (x,t)+ \partial_x g[u](x,t) =0, \\
     u(0)=u_0.
   \end{array}
  \right.
\end{equation}
By formally taking the Fourier transform of the last expression, we get  
\begin{equation}\label{eq:sol-linear}
  \hat u(\xi,t) = \hat K(\xi,t) \hat u_0(\xi),
\end{equation}   
where
\begin{equation}\label{eq:K}
 \hat K(\xi,t) = e^{-t[\xi^2-\xi^{4/3}(a+ib \; \text{sgn}(\xi))]},  
\end{equation}
for $\xi \in \R$ and $t\ge 0$, with $a:=\frac12 \Gamma(\frac23)$ and 
$b:=-\frac{\sqrt 3}{2} \Gamma(\frac23)$.  For $\xi \in \R$, we define 
\begin{equation}\label{eq:Phi}
  \Phi(\xi) := (a+ib \; \text{sgn}(\xi)).
\end{equation}
We note that $|\Phi(\xi)|= \Gamma(\frac23)$, for all $\xi \in \R$.
%%%%%%%%%%%%%%%%
\begin{Remark} The non local term $\partial_x g[u]$ is anti-dissipative of order $4/3$.
\end{Remark}
  
%%%%%%%%%%%%%%%%%%%%%%%%%%%%%%%%%%%%%%%%%%%%%%%%%%%%%%%%%%%%%%%%%%%%%%%%%
\begin{Remark}\label{Remark:nonmonotone}
For every $t>0$, the kernel $K(\cdot,t)$ is not a nonnegative function. Indeed, by contradiction, 
if $K(\cdot,t)$ would be nonnegative, one could bound 
$$|\hat{K}(\xi,t)| \leq \big{|} \frac{1}{\sqrt{2\pi}}\int e^{-i\xi x} K(x,t) dx  \big{|} \leq \hat{K}(0,t) = 1.$$
But, on the other hand,
$|\hat{K}(\xi,t)| = e^{-t[\xi^2-a \xi^{4/3}]} >1,\; \mbox{for} \; 0<|\xi| < a^{3/2}.$
Hence, for every $t>0$, there exists $x\in \R$ such that 
$K(x,t)<0$.  This fact implies, in particular, that the IVP 
(\ref{eq:IVP}) is non-monotone (see \cite{aai:aai} for more details).
\end{Remark}
%%%%%%%%%%%%%%%%%%%%%%%%%%%%%%%%%%%%%%%%%%%%%%%%%%%%%%%%%%%%%%%%%%%%%%%%%
For $t\ge 0$, we define the operator $E(t)$ by 
\begin{equation}\label{eq:ope}
 \left \{
   \begin{array}{l}
     E(t) \phi (x) = \frac{1}{\sqrt{2\pi}}\big(K(\cdot,t)*\phi  \big) (x), 
     \;\;\;\text{for} \;\; t>0 \;\; \text{and} \;\; x \in \R, \\ 
     E(0)\phi=\phi,
   \end{array}
  \right.
\end{equation}
where $\phi \in C_b(\R)$ (see Lemma \ref{Lemma:semig} below).  Now, we define 
the following spaces 
\begin{eqnarray}\label{eq:space}
  Y &:=&  \{g \in C_b(\R) ; g \text{ is uniformly continuous}  \} \;; \\
  X &:=&  \{f \in C_b^1(\R) ; f' \text{ is uniformly continuous}  \}.
\end{eqnarray}
One can see that  $(Y, \|\cdot\|_{C_b(\R)})$, and 
$(X, \|\cdot\|_{C_b^1(\R)})$ are Banach spaces and that 
$X \hookrightarrow Y$.  In Sub-section \ref{subsection:local} 
we will show local-in-time well-posedness of the IVP (\ref{eq:IVP}), with 
initial data $u_0 \in X$.

The following lemma contains a calculus result.
%%%%%%%%%%%%%%%%%%%%%%%%%%%%%%%%%%%%%%%%%%%%%%%%%%%%%%%%%%%%%%%%%%%%%%%%%
\begin{Lemma}\label{Lemma:calculus}
Let $h:\R \mapsto \C$ be a function which satisfies the following conditions:\\
{\bf{i.)}} $h \in L^1(\R) \cap C_{\infty}(\R) \cap C^2(\R \setminus \{ 0 \})$; \\
{\bf{ii.)}} $h' \in L^1(\R)$, $|h'(x)| \rightarrow 0$ as $|x|\rightarrow +\infty$.  
Moreover, there exist $\lim_{x \downarrow0} h'(x)=h'(0^+)$, and 
$\lim_{x \uparrow0} h'(x)=h'(0^-)$; \\
{\bf{iii.)}} $h'' \in L^1(\R)$. \\
Then  $\hat h \in L^1(\R) \cap C_{\infty}(\R)$, and 
\begin{equation}\label{eq:calculus}
   \|\hat h \|_{L^1} \le \sqrt{\frac{2}{\pi}} \Big[ 
   \|h\|_{L^1} + |h'(0^+)-h'(0^-)| + \|h''\|_{L^1} \Big].
\end{equation}
\end{Lemma}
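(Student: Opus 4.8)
The plan is to obtain two complementary pointwise bounds on $\hat h$ — a crude uniform bound that is accurate near $\xi=0$ and a quadratic-decay bound that is accurate for large $|\xi|$ — and then to integrate each over the frequency region where it is efficient. First I would record the elementary uniform estimate $|\hat h(\xi)| \le \frac{1}{\sqrt{2\pi}}\|h\|_{L^1}$, valid for every $\xi\in\R$ directly from the definition of $\hat h$ and hypothesis (i). The membership $\hat h \in C_\infty(\R)$ is then immediate, since $h\in L^1(\R)$ forces $\hat h$ to be continuous and to vanish at infinity by the Riemann--Lebesgue lemma.

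The core of the argument is a decay estimate obtained by integrating by parts twice. For $\xi\neq 0$, using $h\in C_\infty(\R)$ and $h,h'\in L^1(\R)$, a first integration by parts kills the boundary terms at $\pm\infty$ and yields $\hat h(\xi)=\frac{1}{i\xi}\,\widehat{h'}(\xi)$. For the second integration by parts I would split the integral defining $\widehat{h'}(\xi)$ at the origin and treat $\int_{-\infty}^0$ and $\int_0^{+\infty}$ separately; this is exactly where hypotheses (ii) and (iii) are used. Because $h'$ need not be continuous at $0$, the endpoint contributions at $0^-$ and $0^+$ do not cancel, and one is left with the identity
$$\xi^2\,\hat h(\xi) = -\frac{1}{\sqrt{2\pi}}\big(h'(0^+)-h'(0^-)\big) - \widehat{h''}(\xi), \qquad \xi\neq 0.$$
Combined with $|\widehat{h''}(\xi)|\le \frac{1}{\sqrt{2\pi}}\|h''\|_{L^1}$, this gives the decay bound
$$|\hat h(\xi)| \le \frac{1}{\sqrt{2\pi}\,\xi^2}\Big(|h'(0^+)-h'(0^-)| + \|h''\|_{L^1}\Big).$$

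To finish I would assemble the $L^1$ estimate by splitting $\int_{\R}|\hat h|\,d\xi = \int_{|\xi|\le 1}|\hat h|\,d\xi + \int_{|\xi|>1}|\hat h|\,d\xi$, controlling the first piece by the uniform bound (which contributes $\frac{2}{\sqrt{2\pi}}\|h\|_{L^1}$) and the second by the decay bound together with $\int_{|\xi|>1}\xi^{-2}\,d\xi = 2$ (which contributes $\frac{2}{\sqrt{2\pi}}(|h'(0^+)-h'(0^-)| + \|h''\|_{L^1})$). Since $\frac{2}{\sqrt{2\pi}} = \sqrt{2/\pi}$, summing the two pieces reproduces inequality (\ref{eq:calculus}) and shows in particular that $\hat h\in L^1(\R)$.

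The hard part will be the bookkeeping in the second integration by parts: one must split at the origin and track the sign of each endpoint term, since the entire content of the lemma is that the jump $h'(0^+)-h'(0^-)$ does not cancel and survives into the final estimate. The hypotheses have been arranged precisely so that every boundary term at $\pm\infty$ vanishes (through $h,h'\to 0$) while the one-sided limits $h'(0^{\pm})$ remain finite and account for the discontinuity of $h'$ at $0$.
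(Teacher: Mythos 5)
Your proposal is correct and follows essentially the same route as the paper's own proof: Riemann--Lebesgue for $\hat h \in C_{\infty}(\R)$, a double integration by parts split at the origin so that the jump $h'(0^+)-h'(0^-)$ survives in a $\xi^{-2}$ decay identity, and then the combination of the uniform bound $|\hat h(\xi)|\le \frac{1}{\sqrt{2\pi}}\|h\|_{L^1}$ with that decay bound. The only difference is presentational: you spell out the final splitting of $\int_{\R}|\hat h|\,d\xi$ at $|\xi|=1$, which the paper leaves implicit when it says the estimate "follows" from the two bounds.
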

\begin{proof}
Since $h\in L^1(\R)$, it follows from the Riemann-Lebesgue lemma that 
$\hat h \in C_{\infty}(\R)$.  After using integration by parts twice, we see that  
\begin{equation*}\label{eq:calculus1}
  \hat h(\xi) = \frac{1}{\sqrt{2\pi}} \Big[ 
  \int_{-\infty}^{+\infty} h''(x) \frac{e^{-i\xi x}}{(i \xi)^2} dx 
  +\frac{h'(0^+)-h'(0^-)}{(i\xi^2)} \Big], \;\;\; \text{for } \; 
  \xi \not = 0.
\end{equation*}
Expression (\ref{eq:calculus}) follows from the last equation and from the 
fact that $\|\hat h\|_{L^{\infty}} \le \frac{1}{\sqrt{2\pi}} \|h\|_{L^1}$. 
\end{proof}
\begin{Remark}\label{Remark:calculus}
It is well-known that $W^{1,1}(\R) \subset C_{\infty}(\R) \cap AC(\R)$, 
where $AC(\R)$ denotes the space of all complex-valued functions, which are 
absolutely continuous on $\R$.  Therefore, it follows 
from Lemma \ref{Lemma:calculus} above that if 
$f \in W^{2,1}(\R)$, then $\hat f \in L^1(\R) \cap C_\infty(\R)$ and
\begin{equation}\label{eq:calculus2}
   \|\hat f \|_{L^1} \le \sqrt{\frac{2}{\pi}} \Big[ 
   \|f\|_{L^1} +  \|f''\|_{L^1} \Big].
\end{equation}
\end{Remark}
%%%%%%%%%%%%%%%%%%%%%%%%%%%%%%%%%%%%%%%%%%%%%%%%%%%%%%%%%%%%%%%%%%%%%%%%%
\begin{Remark}\label{Remark:Kernel}
Suppose now that $t \in (0,1)$.  Since
\begin{eqnarray*}
  K(x,t) &=& \frac{1}{\sqrt{2\pi}} \int e^{ix\xi} 
             e^{-t[\xi^2-\xi^{4/3}\Phi(\xi)]} d\xi \\
         &=& \frac{t^{-1/2}}{\sqrt{2\pi}} \int e^{i (t^{-1/2}x) \xi} \;
             e^{-[\xi^2-\xi^{4/3}\Phi(\xi)]} \;
             e^{-(1-t^{1/3})\xi^{4/3}\Phi(\xi)} d\xi,
\end{eqnarray*}
it follows that 
\begin{equation}\label{eq:Kernel}
    K(x,t) = t^{-1/2} \big(K(\cdot,1) * G(\cdot,1-t^{1/3})\big)(t^{-1/2}x),  \;\; 
    \text{for} \;\; x \in \R,
\end{equation}
where
\begin{equation}\label{eq:Kernel-G}
   G(\cdot,1-t^{1/3}) = \frac{1}{\sqrt{2\pi}} \F^{-1} 
   (e^{-(1-t^{1/3})\xi^{4/3}(a+ib \; \text{sgn}(\xi))}) (\cdot).
\end{equation}
\end{Remark}
%%%%%%%%%%%%%%%%%%%%%%%%%%%%%%%%%%%%%%%%%%%%%%%%%%%%%%%%%%%%%%%%%%%%%%%%%
The next three lemmas are elementary calculus results  which will be used 
in the sequel.
\begin{Lemma}\label{Lemma:calculus1}
Suppose that $\alpha>-1$, and $\beta>0$.  Then 
\begin{equation*}
  I(\alpha,\beta) :=\int |\xi|^\alpha e^{-\beta |\xi|^{4/3}} d\xi 
  = C(\alpha) \beta^{-\frac34(\alpha+1)}.
\end{equation*}
\end{Lemma}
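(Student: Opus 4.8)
The plan is to evaluate the integral $I(\alpha,\beta) = \int_{\R} |\xi|^\alpha e^{-\beta|\xi|^{4/3}} d\xi$ by a direct substitution reducing it to the Gamma function. First I would exploit the evenness of the integrand in $\xi$: since both $|\xi|^\alpha$ and $e^{-\beta|\xi|^{4/3}}$ are even, one has
\begin{equation*}
  I(\alpha,\beta) = 2 \int_0^{+\infty} \xi^\alpha e^{-\beta \xi^{4/3}} d\xi.
\end{equation*}
The hypotheses $\alpha > -1$ and $\beta > 0$ are exactly what guarantee convergence: near $\xi = 0$ the factor $\xi^\alpha$ is integrable because $\alpha > -1$, and near $+\infty$ the stretched-exponential decay $e^{-\beta\xi^{4/3}}$ with $\beta>0$ dominates any power, so the integral is finite.

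Next I would introduce the substitution that turns the exponent into a clean linear form. The natural choice is $u := \beta \xi^{4/3}$, equivalently $\xi = (u/\beta)^{3/4}$. Then $d\xi = \frac34 \beta^{-3/4} u^{-1/4} du$, and $\xi^\alpha = \beta^{-3\alpha/4} u^{3\alpha/4}$. Substituting and collecting the powers of $\beta$ gives
\begin{equation*}
  I(\alpha,\beta) = 2 \int_0^{+\infty} \beta^{-\frac{3\alpha}{4}} u^{\frac{3\alpha}{4}} e^{-u} \; \frac34 \beta^{-\frac34} u^{-\frac14} du
  = \frac32 \beta^{-\frac34(\alpha+1)} \int_0^{+\infty} u^{\frac{3\alpha}{4}-\frac14} e^{-u} du.
\end{equation*}
The remaining integral is recognized as a Gamma function: $\int_0^\infty u^{s-1} e^{-u} du = \Gamma(s)$ with $s = \frac{3\alpha}{4} + \frac34 = \frac34(\alpha+1)$. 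The convergence of this Gamma integral at $u=0$ requires $s > 0$, i.e. $\frac34(\alpha+1) > 0$, which is precisely the assumption $\alpha > -1$; this is the consistency check that confirms the hypotheses are used correctly.

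Putting the pieces together yields
\begin{equation*}
  I(\alpha,\beta) = \frac32 \Gamma\Big(\frac34(\alpha+1)\Big) \; \beta^{-\frac34(\alpha+1)},
\end{equation*}
so with $C(\alpha) := \frac32 \Gamma\big(\frac34(\alpha+1)\big)$ we obtain exactly the claimed formula $I(\alpha,\beta) = C(\alpha)\beta^{-\frac34(\alpha+1)}$, with the dependence on $\beta$ isolated in the single power $\beta^{-\frac34(\alpha+1)}$ and all dependence on $\alpha$ absorbed into the constant. Since this is an elementary one-substitution computation, I do not anticipate any genuine obstacle; the only point requiring a small amount of care is tracking the exponents of $\beta$ through the substitution so that they combine to give the single factor $\beta^{-\frac34(\alpha+1)}$, and verifying that the admissibility conditions $\alpha>-1$, $\beta>0$ guarantee convergence at both endpoints.
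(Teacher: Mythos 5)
Your proof is correct and follows essentially the same route as the paper's: a single change of variables that extracts the factor $\beta^{-\frac34(\alpha+1)}$, with the hypotheses $\alpha>-1$, $\beta>0$ guaranteeing convergence. The only difference is cosmetic: the paper rescales via $\tau=\beta^{3/4}\xi$ and leaves $C(\alpha)=\int|\tau|^\alpha e^{-|\tau|^{4/3}}\,d\tau$ as an unevaluated convergent integral, while you substitute $u=\beta\xi^{4/3}$ and identify the constant explicitly as $\frac32\Gamma\big(\tfrac34(\alpha+1)\big)$, which is harmless extra precision.
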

\begin{proof}
The assertion of the lemma follows from the fact that 
  $$I(\alpha,\beta) =\beta^{-\frac34(\alpha+1)} 
  \int |\tau|^\alpha e^{-|\tau|^{4/3}} d\tau. $$ 
 % \le 2 \beta^{-\frac34(\alpha+1)} 
 % \Big[\int_0^1 u^{\alpha} du +
 % \int_1^{+\infty} u^{\alpha} e^{-u} du \Big].$$ 
\end{proof}

%%%%%%%%%%%%%%%%%%%%%%%%%%%%%%%%%%%%%%%%%%%%%%%%%%%%%%%%%%%%%%%%%%%%%%%%%
\begin{Lemma}\label{Lemma:calculus2}
Suppose that $\alpha>-1$, $\beta>0$, and $t>0$.  Then 
\begin{equation*}
  I(\alpha,\beta,t) :=\int |\xi|^\alpha e^{-t[\xi^2-\beta |\xi|^{4/3}]} d\xi 
  \le C(\alpha,\beta) \Big[ e^{\frac{4}{27}\beta^3t} +t^{-\frac{\alpha+1}{2}}  \Big].
\end{equation*}
\end{Lemma}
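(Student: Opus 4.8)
The plan is to exploit the evenness of the integrand and to split the real line into a bounded region, where the anti-dissipative term $\beta|\xi|^{4/3}$ can render the exponent positive, and an unbounded region, where the dissipative term $\xi^2$ dominates and produces Gaussian decay. First I would note that, since $|\xi|^\alpha$, $\xi^2$ and $|\xi|^{4/3}$ are all even, one has $I(\alpha,\beta,t) = 2\int_0^{+\infty} \xi^\alpha e^{-t[\xi^2 - \beta\xi^{4/3}]}\,d\xi$, so it suffices to estimate the half-line integral.

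Next I would analyze the exponent through the auxiliary function $h(\xi) := \beta\xi^{4/3} - \xi^2$ on $[0,+\infty)$. A direct computation gives $h'(\xi) = \frac43\beta\xi^{1/3} - 2\xi$, whose unique positive zero is $\xi_* = (\frac23\beta)^{3/2}$, and there $h$ attains its maximum value $h(\xi_*) = \frac{4}{27}\beta^3$. This is precisely the source of the exponential factor $e^{\frac{4}{27}\beta^3 t}$ in the claimed bound. I would then choose the threshold $R := (2\beta)^{3/2}$, at which $\beta\xi^{4/3} = \frac12\xi^2$, and split $\int_0^{+\infty} = \int_0^R + \int_R^{+\infty}$.

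On the outer region $\xi \ge R$ one has $\beta\xi^{4/3} \le \frac12\xi^2$, hence $\xi^2 - \beta\xi^{4/3} \ge \frac12\xi^2$, so the exponent is bounded above by $-\frac{t}{2}\xi^2$; estimating $\int_R^{+\infty} \xi^\alpha e^{-\frac{t}{2}\xi^2}\,d\xi \le \int_0^{+\infty} \xi^\alpha e^{-\frac{t}{2}\xi^2}\,d\xi$ and rescaling $\xi \mapsto (t/2)^{1/2}\xi$ (in the spirit of Lemma \ref{Lemma:calculus1}, using $\alpha > -1$ for convergence at the origin) yields a bound of the form $C(\alpha)\,t^{-\frac{\alpha+1}{2}}$. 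On the inner region $0 \le \xi \le R$ I would bound the exponent by its maximum, $-t[\xi^2 - \beta\xi^{4/3}] = t\,h(\xi) \le \frac{4}{27}\beta^3 t$, and integrate the remaining power over a bounded set: $\int_0^R \xi^\alpha\,d\xi = R^{\alpha+1}/(\alpha+1) = C(\alpha,\beta)$, again using $\alpha > -1$. This produces the term $C(\alpha,\beta)\,e^{\frac{4}{27}\beta^3 t}$. Adding the two contributions and absorbing the factor $2$ gives the stated inequality.

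The only genuine subtlety, and the step I would be most careful about, is matching the exact constant $\frac{4}{27}\beta^3$ in the exponential: this forces the inner-region estimate to use the sharp maximum of $h$ rather than a crude upper bound. Everything else is routine single-variable calculus together with a Gaussian rescaling, and the hypotheses $\alpha>-1$ and $\beta>0$ are exactly what is needed to make both the power integral at the origin and the choice of threshold $R$ meaningful.
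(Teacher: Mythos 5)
Your proof is correct and follows essentially the same route as the paper: both split the half-line integral at $(2\beta)^{3/2}$, bound the exponent on the bounded piece by the sharp maximum $\frac{4}{27}\beta^3 t$, and use Gaussian decay with the rescaling $\xi \mapsto \sqrt{t/2}\,\xi$ (requiring $\alpha>-1$) on the tail. The only difference is that you spell out the calculus the paper labels ``elementary to check.''
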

\begin{proof}
It is elementary to check that $\xi^2-\beta\xi^{4/3} \ge -\frac{4}{27}\beta^3$ for all 
$\xi \in \R$, and $\xi^2-\beta\xi^{4/3} \ge \xi^2/2$ for $\xi\ge(2\beta)^{3/2}$.  Then
\begin{eqnarray*}
  I(\alpha,\beta,t) &\le& 2 \Big[ 
  \int_0^{(2\beta)^{3/2}} \xi^\alpha e^{\frac{4}{27}\beta^3t}  d\xi 
  +\int_{(2\beta)^{3/2}}^{+\infty} \xi^\alpha e^{-\frac{t}{2}\xi^2} d\xi\Big] \\ 
  &\le& C(\alpha,\beta) \Big[ e^{\frac{4}{27}\beta^3t} + 
  \int_0^{+\infty} \Big(\frac{2}{t}\Big)^{\frac{\alpha}{2}} u^\alpha 
  e^{-u^2} \sqrt{\frac{2}{t}} du\Big],
\end{eqnarray*}
where in the last inequality we have used the fact that $\alpha >-1$.  The 
result now follows.
\end{proof}
%%%%%%%%%%%%%%%%%%%%%%%%%%%%%%%%%%%%%%%%%%%%%%%%%%%%%%%%%%%%%%%%%%%%%%%%%
\begin{Lemma}\label{Lemma:calculus3}
Suppose that $g \in W^{1,1}(\R)$, and $l \in L^\infty (\R)$.  If  
$f:=g*l$, then  $f \in C^1(\R) \cap W^{1,\infty}(\R)$, and  
$f'(x)= (g'*l)(x)$ for all $x \in  \R$.
\end{Lemma}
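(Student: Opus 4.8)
The plan is to verify directly, via difference quotients, that $g'*l$ is the classical derivative of $f$ at every point, and then to read off the stated regularity from general properties of the convolution of an $L^1$ function with an $L^\infty$ function. First I would record that the convolutions are honest pointwise integrals: since $g \in L^1(\R)$ and $l \in L^\infty(\R)$, for each fixed $x$ the integrand $y \mapsto g(x-y) l(y)$ is dominated by $\|l\|_{L^\infty} |g(x-y)|$, so $f(x) = (g*l)(x)$ is defined for every $x$ with $\|f\|_{L^\infty} \le \|g\|_{L^1} \|l\|_{L^\infty}$; in the same way $g'*l$ is defined everywhere and bounded by $\|g'\|_{L^1}\|l\|_{L^\infty}$, because $g' \in L^1(\R)$.

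The central point is to exploit the absolute continuity of $g$. By Remark \ref{Remark:calculus} we have $W^{1,1}(\R) \subset C_\infty(\R) \cap AC(\R)$, so $g$ coincides with an absolutely continuous representative and the fundamental theorem of calculus yields, for any $h \neq 0$,
$$ g(x+h-y) - g(x-y) = \int_0^h g'(x+s-y)\, ds. $$
Dividing by $h$, substituting into the difference quotient of $f$, and subtracting the candidate derivative gives
$$ \frac{f(x+h)-f(x)}{h} - (g'*l)(x) = \int \Big( \frac{1}{h}\int_0^h \big[ g'(x+s-y) - g'(x-y) \big]\, ds \Big) l(y)\, dy. $$

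I would then bound $|l|$ by $\|l\|_{L^\infty}$, move the absolute value inside, apply Fubini's theorem to exchange the $ds$ and $dy$ integrations, and substitute $u=x-y$ so that the inner integral becomes the $L^1$ modulus of continuity of $g'$:
$$ \Big| \frac{f(x+h)-f(x)}{h} - (g'*l)(x) \Big| \le \|l\|_{L^\infty}\, \frac{1}{|h|} \Big| \int_0^h \| g'(\cdot + s) - g'(\cdot) \|_{L^1}\, ds \Big|. $$
Since translation is continuous in $L^1(\R)$, the map $s \mapsto \|g'(\cdot+s)-g'(\cdot)\|_{L^1}$ is continuous and vanishes at $s=0$, so its average over the interval between $0$ and $h$ tends to $0$ as $h \to 0$. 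This proves $f$ is differentiable at every $x$ with $f'(x) = (g'*l)(x)$. To upgrade to $f \in C^1(\R) \cap W^{1,\infty}(\R)$, I would invoke the standard fact that $g'*l$ is uniformly continuous: $|(g'*l)(x+h)-(g'*l)(x)| \le \|l\|_{L^\infty}\,\|g'(\cdot+h)-g'(\cdot)\|_{L^1}$ tends to $0$ uniformly in $x$, again by $L^1$-continuity of translation. Thus $f'$ is continuous and bounded and $f$ is bounded, giving the claim.

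I expect the main obstacle to be the rigorous passage to the limit in the difference quotient; the crucial observation—precisely what the hypothesis $g \in W^{1,1}(\R)$ provides—is that the increment of $g$ can be written as an integral of $g'$, so the whole error is controlled by the $L^1$ modulus of continuity of $g'$ rather than by any pointwise regularity of $g'$. Everything else (boundedness, the Fubini exchange, and the uniform continuity of $g'*l$) is routine once this is in place.
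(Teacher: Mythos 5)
Your proof is correct and follows essentially the same route as the paper's: both write the increment of $g$ as an integral of $g'$ using $W^{1,1}(\R)\subset AC(\R)$, reduce the difference-quotient error to an average of the $L^1$ modulus of continuity $\|g'(\cdot+s)-g'(\cdot)\|_{L^1}$, and conclude via continuity of translations in $L^1(\R)$ (the paper phrases this last step through the dominated convergence theorem, you through continuity of the averaged function at $s=0$, which is the same thing). If anything, you are slightly more complete, since you explicitly verify the uniform continuity of $f'=g'*l$ needed for $f\in C^1(\R)\cap W^{1,\infty}(\R)$, a point the paper leaves implicit.
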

\begin{proof}
Since $g \in L^1(\R)$ and $l \in L^\infty(\R)$, it follows from Young's inequality 
that $f \in L^\infty (\R)$.  Moreover, since 
$|f(x+h)-f(x)|\le \|g(\cdot+h)- g(\cdot) \|_{L^1} \|l\|_{L^\infty}$ for 
all $x\in \R$, and $ \|g(\cdot+h)- g(\cdot) \|_{L^1} \rightarrow 0$ as 
$h$ tends to zero, it follows that $f \in C(\R)$.  Let $x\in \R$.  Since  
$W^{1,1} (\R) \subset AC(\R)$, we see that 
\begin{eqnarray*}
  && \Big|\frac{f(x+h)-f(x)}{h} - \int g'(x-y) l(y) dy \Big| = \\
  && \Big|\int \int_0^1 \big(g'(x-y+th) - g'(x-y)\big) l(y) dtdy \Big| \\
  && \le \|l\|_{L^\infty}  \int_0^1 \|g'(\cdot+th) -g'(\cdot) \|_{L^1} dt \rightarrow 0 
  \;\;\; \text{as} \;\; h\rightarrow 0,
\end{eqnarray*}
where the last expression is a consequence of the dominated convergence theorem.
\end{proof}
%%%%%%%%%%%%%%%%%%%%%%%%%%%%%%%%%%%%%%%%%%%%%%%%%%%%%%%%%%%%%%%%%%%%%%%%%

The following five lemmas provide more explicit estimates than the corresponding 
results mentioned in \cite{aai:aai}.  The next lemma gives an upper bound, which 
goes to infinity as $t$ tends to $1$, for
$\| G(\cdot, 1-t^{1/3}) \|_{L^1}$ when $t \in [0,1)$.
\begin{Lemma}\label{Lemma:Kernel-G}
Let $t_0 \in(0,1)$.  Then, for all $t\in[0,t_0]$,  the function 
$G(\cdot,1-t^{1/3})$ given by (\ref{eq:Kernel-G}) belongs to 
$ L^1(\R) \cap C_{\infty}(\R)$.  Moreover, 
\begin{equation}\label{eq:Kernel-G-1}
  \| G(\cdot, 1-t^{1/3}) \|_{L^1} 
  \le C \big[ (1-t^{1/3})^{3/4} + (1-t^{1/3})^{-3/4}\big]
  \le C \cdot (1-t_0^{1/3})^{-3/4},
\end{equation}
for all $t\in[0,t_0]$, where $C$ is a positive constant independent of $t$.
\end{Lemma}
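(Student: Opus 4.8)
The plan is to reduce the whole estimate to a single application of the calculus Lemma \ref{Lemma:calculus} to the Fourier symbol itself. Abbreviate $s := 1-t^{1/3}$, so that as $t$ ranges over $[0,t_0]$ the parameter $s$ ranges over $[1-t_0^{1/3},1]$; in particular $s\le 1$ and $s$ is bounded below by the fixed positive number $1-t_0^{1/3}$. Writing $m_s(\xi) := e^{-s|\xi|^{4/3}\Phi(\xi)}$ with $\Phi$ as in (\ref{eq:Phi}), definition (\ref{eq:Kernel-G}) reads $G(\cdot,s)=\frac{1}{\sqrt{2\pi}}\F^{-1}(m_s)$. Since $\F^{-1}m_s(x)=\hat m_s(-x)$, reflection gives $\|G(\cdot,s)\|_{L^1}=\frac{1}{\sqrt{2\pi}}\|\hat m_s\|_{L^1}$, so it suffices to bound $\|\hat m_s\|_{L^1}$ by $C[s^{3/4}+s^{-3/4}]$ uniformly in $s$. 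The membership $G(\cdot,s)\in L^1(\R)\cap C_\infty(\R)$ will come for free from Lemma \ref{Lemma:calculus}, once its hypotheses are verified, together with the fact that reflection preserves $L^1\cap C_\infty$.

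First I would check that $h=m_s$ satisfies the hypotheses (i)--(iii) of Lemma \ref{Lemma:calculus}. Because $\Phi(\xi)=a+ib\,\text{sgn}(\xi)$ with $a>0$, one has $|m_s(\xi)|=e^{-sa|\xi|^{4/3}}$, so $m_s\in L^1(\R)\cap C_\infty(\R)$ and $m_s$ is smooth off the origin. Differentiating separately on $\{\xi>0\}$ and $\{\xi<0\}$ gives
\begin{equation*}
  m_s'(\xi)=-\tfrac{4}{3}\,s\,\Phi(\xi)\,|\xi|^{1/3}\,\text{sgn}(\xi)\,m_s(\xi),
\end{equation*}
and since $|\Phi(\xi)|=\Gamma(\tfrac23)$ this yields $|m_s'(\xi)|\le C\,s\,|\xi|^{1/3}e^{-sa|\xi|^{4/3}}$. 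By Lemma \ref{Lemma:calculus1} (with $\alpha=1/3$) this is integrable, it tends to $0$ at infinity, and the factor $|\xi|^{1/3}$ forces $m_s'(0^{+})=m_s'(0^{-})=0$, so the jump term in (\ref{eq:calculus}) vanishes. A second differentiation produces two contributions dominated in modulus by $C\,s\,|\xi|^{-2/3}e^{-sa|\xi|^{4/3}}$ and $C\,s^{2}|\xi|^{2/3}e^{-sa|\xi|^{4/3}}$, both in $L^1(\R)$ by Lemma \ref{Lemma:calculus1} (with $\alpha=-2/3$ and $\alpha=2/3$, both $>-1$).

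The quantitative bound then follows by computing the two surviving terms of (\ref{eq:calculus}). Lemma \ref{Lemma:calculus1} with $\alpha=0$, $\beta=sa$ gives $\|m_s\|_{L^1}=\int e^{-sa|\xi|^{4/3}}\,d\xi=C\,s^{-3/4}$, while $\int|\xi|^{-2/3}e^{-sa|\xi|^{4/3}}d\xi=C\,s^{-1/4}$ and $\int|\xi|^{2/3}e^{-sa|\xi|^{4/3}}d\xi=C\,s^{-5/4}$, whence
\begin{equation*}
  \|m_s''\|_{L^1}\le C\big(s\cdot s^{-1/4}+s^{2}\cdot s^{-5/4}\big)=C\,s^{3/4}.
\end{equation*}
Feeding these, together with the vanishing jump term, into Lemma \ref{Lemma:calculus} gives $\|\hat m_s\|_{L^1}\le C[s^{-3/4}+s^{3/4}]$, which is precisely the first inequality of (\ref{eq:Kernel-G-1}). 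Finally, for $t\in[0,t_0]$ one has $0<1-t_0^{1/3}\le s\le 1$, so $s^{3/4}\le 1\le(1-t_0^{1/3})^{-3/4}$ and $s^{-3/4}\le(1-t_0^{1/3})^{-3/4}$, which delivers the second inequality.

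The Fourier-inversion bookkeeping is routine; the step that needs genuine care is the behaviour of $m_s$ at $\xi=0$. The symbol is not classically differentiable there—the modulus carries a $|\xi|^{4/3}$ cusp and the phase carries the jump of $\text{sgn}(\xi)$—so one must work on the two half-lines and confirm both that the first-order jump in (\ref{eq:calculus}) genuinely cancels and that the second derivative, which blows up like $|\xi|^{-2/3}$ near the origin, remains integrable. Tracking the exact homogeneity in $s$ through these derivative computations is what produces the two competing powers $s^{-3/4}$ (from $\|m_s\|_{L^1}$) and $s^{3/4}$ (from $\|m_s''\|_{L^1}$), and hence the blow-up of the bound as $t\uparrow 1$.
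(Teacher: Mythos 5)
Your proof is correct and follows essentially the same route as the paper: both apply Lemma \ref{Lemma:calculus} to the symbol $e^{-(1-t^{1/3})\xi^{4/3}\Phi(\xi)}$, check the vanishing of the one-sided derivative jump at $\xi=0$, and use Lemma \ref{Lemma:calculus1} to compute the $L^1$ norms of the symbol and its second derivative, yielding the competing powers $(1-t^{1/3})^{-3/4}$ and $(1-t^{1/3})^{3/4}$. The only cosmetic differences are your substitution $s=1-t^{1/3}$ and your explicit handling of the $\F^{-1}$ versus $\F$ reflection, which the paper leaves implicit.
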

\begin{proof}
Let $t\in[0,1)$.  We define 
$g(\xi,t):=e^{-(1-t^{1/3})\xi^{4/3}\Phi(\xi)}$ for $\xi \in \R$.  It 
follows that  $g(\cdot,t)$ is continuous.  Furthermore,
\begin{equation}\label{eq:Kernel-G-1a}
  \partial_{\xi} g(\xi,t) = -\frac{4}{3} (1-t^{1/3}) \xi^{1/3} \Phi(\xi)
  e^{-(1-t^{1/3})\xi^{4/3}\Phi(\xi)},  \;\; \text{for} \; 
  \xi \not= 0. 
\end{equation}
Then $|\partial_\xi g (\xi,t)| \rightarrow 0$ as $|\xi| \rightarrow + \infty$, 
and $\partial_{\xi}g(0^+,t)=0=\partial_{\xi}g(0^-,t)$.  Moreover, 
\begin{eqnarray*}
  \partial_{\xi}^2 g(\xi,t) &=& \Big[-\frac{4}{9} (1-t^{1/3}) \xi^{-2/3} \Phi(\xi)
  +\Big(\frac43 (1-t^{1/3}) \xi^{1/3} \Phi(\xi) \Big)^2 \Big] \\
  && \times  
  e^{-(1-t^{1/3})\xi^{4/3}\Phi(\xi)},  \;\;\;\; \text{for} \;\; 
  \xi \not= 0.
\end{eqnarray*}
We see that $g(\cdot,t) \in C_{\infty}(\R) \cap C^2(\R \setminus \{ 0 \})$.  
In addition, $\|g(\cdot,t)\|_{L^1} = C \cdot (1-t^{1/3})^{-3/4}$, 
and $\|\partial_{\xi} g(\cdot,t)  \|_{L^1} =4$.  Furthermore,
\begin{eqnarray*}
  \|\partial_{\xi}^2 g(\cdot,t)\|_{L^1} &\le& 
  C \cdot (1-t^{1/3}) \int |\xi|^{-2/3} e^{-(1-t^{1/3}) a \xi^{4/3}} d\xi  \\
  && + C \cdot (1-t^{1/3})^2 \int |\xi|^{2/3} e^{-(1-t^{1/3}) a \xi^{4/3}} d\xi \\
  &\le& C \cdot (1-t^{1/3})^{3/4},
\end{eqnarray*}
where the last inequality is a consequence of Lemma \ref{Lemma:calculus1} above.  The 
result now follows from Lemma \ref{Lemma:calculus}.
\end{proof}

%%%%%%%%%%%%%%%%%%%%%%%%%%%%%%%%%%%%%%%%%%%%%%%%%%%%%%%%%%%%%%%%%%%%%%%%%

Lemmas \ref{Lemma:Kernel-K} and \ref{Lemma:Kernel-Kd} below provide estimates 
for $\| K(\cdot, t) \|_{L^1}$ and $\|\partial_x K(\cdot, t) \|_{L^1}$, for any $t>0$.  

\begin{Lemma}\label{Lemma:Kernel-K}
Suppose that $t>0$.  Then the function 
$K(\cdot,t) \in L^1(\R) \cap C_{\infty}(\R)$, and 
\begin{equation}\label{eq:Kernel-K-1}
  \| K(\cdot, t) \|_{L^1} 
  \le C \cdot \big( 1 + t^2 e^{\frac{4}{27}a^3t} \big), 
\end{equation}
where $C$ is a positive constant independent of $t$.
\end{Lemma}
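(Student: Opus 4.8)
The plan is to realize $K(\cdot,t)$ as the (reflected) Fourier transform of its own symbol and then invoke the calculus estimate of Lemma \ref{Lemma:calculus}. Writing $h(\xi):=\hat K(\xi,t)=e^{-t[\xi^2-|\xi|^{4/3}\Phi(\xi)]}$, with $\Phi$ as in (\ref{eq:Phi}) (here $\xi^{4/3}$ is read as $|\xi|^{4/3}$, consistent with $|\hat K(\xi,t)|=e^{-t[\xi^2-a|\xi|^{4/3}]}$), one has $K(x,t)=\F^{-1}[h](x)=\hat h(-x)$, so that $\|K(\cdot,t)\|_{L^1}=\|\hat h\|_{L^1}$. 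It therefore suffices to verify that $h$ meets hypotheses \textbf{i.)}--\textbf{iii.)} of Lemma \ref{Lemma:calculus} and to control the three quantities $\|h\|_{L^1}$, $|h'(0^+)-h'(0^-)|$ and $\|h''\|_{L^1}$ entering (\ref{eq:calculus}). Membership $h\in L^1(\R)\cap C_\infty(\R)\cap C^2(\R\setminus\{0\})$ is immediate, since $|h(\xi)|=e^{-t[\xi^2-a|\xi|^{4/3}]}$ decays faster than any Gaussian while the exponent is smooth off the origin; the same domination shows $h'\in L^1(\R)$ with $|h'|\to0$ at infinity. This already yields the qualitative claim $K(\cdot,t)\in L^1(\R)\cap C_\infty(\R)$ for every $t>0$.

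For the quantitative part I would differentiate $h$. Setting $P(\xi):=-t\xi^2+t|\xi|^{4/3}\Phi(\xi)$, one gets $h'=P'h$ and $h''=(P''+(P')^2)h$, where $P'(\xi)=-2t\xi+\tfrac{4t}{3}|\xi|^{1/3}\text{sgn}(\xi)\Phi(\xi)$ and $P''(\xi)=-2t+\tfrac{4t}{9}|\xi|^{-2/3}\Phi(\xi)$. Since $|\xi|^{1/3}\text{sgn}(\xi)\to0$ as $\xi\to0$, both one-sided limits $h'(0^\pm)$ vanish, so the jump $|h'(0^+)-h'(0^-)|$ is zero and drops out of (\ref{eq:calculus}). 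Expanding $|h''|\le(|P''|+|P'|^2)|h|$ and using $|\Phi|=\Gamma(\tfrac23)$ produces a finite sum of terms $t^{j}\int|\xi|^{\alpha}|h(\xi)|\,d\xi=t^{j}I(\alpha,a,t)$, with $j\in\{1,2\}$ and $\alpha\in\{-\tfrac23,0,\tfrac23,\tfrac43,2\}$; crucially every exponent satisfies $\alpha>-1$, so Lemma \ref{Lemma:calculus2} gives $I(\alpha,a,t)\le C(\alpha,a)[e^{\frac{4}{27}a^3t}+t^{-(\alpha+1)/2}]$. A short bookkeeping then shows each summand is dominated by $C(1+t^2e^{\frac{4}{27}a^3t})$, using $t^{j}e^{\frac{4}{27}a^3t}\le t^2e^{\frac{4}{27}a^3t}$ and $t^{\,j-(\alpha+1)/2}\le t^2e^{\frac{4}{27}a^3t}$ for $t\ge1$, while all these quantities remain bounded on compact $t$-intervals; likewise $\|h\|_{L^1}=I(0,a,t)\le C[e^{\frac{4}{27}a^3t}+t^{-1/2}]$. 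Feeding these into (\ref{eq:calculus}) yields the desired bound, but only for $t$ bounded away from $0$: the term $t^{-1/2}$ coming from $\|h\|_{L^1}$ (equivalently, the non-integrability of the limit $h\equiv1$ at $t=0$) prevents uniformity as $t\downarrow0$. Taking, say, $t\ge\tfrac12$ makes $t^{-1/2}\le\sqrt2$, so $\|K(\cdot,t)\|_{L^1}\le C(1+t^2e^{\frac{4}{27}a^3t})$ holds on $[\tfrac12,\infty)$.

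To cover $0<t<\tfrac12$ I would switch to the scaling identity of Remark \ref{Remark:Kernel}, namely $K(x,t)=t^{-1/2}\big(K(\cdot,1)*G(\cdot,1-t^{1/3})\big)(t^{-1/2}x)$. The change of variables $y=t^{-1/2}x$ absorbs the prefactor in $L^1$, and Young's inequality gives $\|K(\cdot,t)\|_{L^1}\le\|K(\cdot,1)\|_{L^1}\,\|G(\cdot,1-t^{1/3})\|_{L^1}$. Here $\|K(\cdot,1)\|_{L^1}$ is finite by the case $t=1$ treated above, and Lemma \ref{Lemma:Kernel-G} with $t_0=\tfrac12$ bounds $\|G(\cdot,1-t^{1/3})\|_{L^1}$ by a constant uniformly in $t\in(0,\tfrac12]$; hence $\|K(\cdot,t)\|_{L^1}$ is bounded by a constant there, which is trivially $\le C(1+t^2e^{\frac{4}{27}a^3t})$. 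Combining the two regimes gives the claim for all $t>0$.

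The main obstacle, and the step requiring the most care, is the $L^1$ estimate of $h''$: one must differentiate the fractional exponent correctly, keep track of the singular factor $|\xi|^{-2/3}$ (which is still integrable against $|h|$ precisely because $-\tfrac23>-1$, allowing Lemma \ref{Lemma:calculus2} to apply), and then carry out the $t$-power bookkeeping so that every contribution is absorbed into $C(1+t^2e^{\frac{4}{27}a^3t})$. A secondary subtlety is that the Fourier-analytic bound based on Lemma \ref{Lemma:calculus} degenerates as $t\downarrow0$, which is exactly why the proof must be completed through the scaling representation of Remark \ref{Remark:Kernel} together with the uniform constant furnished by Lemma \ref{Lemma:Kernel-G}.
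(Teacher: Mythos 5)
Your proof is correct, and its core coincides with the paper's: you apply Lemma \ref{Lemma:calculus} to $h=\hat K(\cdot,t)$, compute $h'$ and $h''$ away from the origin, note that the jump $h'(0^+)-h'(0^-)$ vanishes, and control $\|h\|_{L^1}$ and $\|h''\|_{L^1}$ through Lemma \ref{Lemma:calculus2}; this reproduces exactly the paper's intermediate estimate $\|K(\cdot,t)\|_{L^1}\le C\big[\frac{1}{\sqrt t}+t^2e^{\frac{4}{27}a^3t}\big]$ (their (\ref{eq:Kernel-K-1a})), whose $t^{-1/2}$ term indeed forces a separate treatment of small times. Where you genuinely diverge is in that small-time regime. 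The paper starts from the same scaling identity of Remark \ref{Remark:Kernel}, but then writes $\int|K(x,t)|dx$ as a weighted integral, applies Cauchy--Schwarz against $(1+y^2)^{-1/2}$ and Plancherel, and reduces everything to the uniform $H^1$ bound (\ref{eq:unif1}) on $\hat K(\cdot,1)\hat G(\cdot,1-t^{1/3})$. You instead observe that the $L^1$ norm is invariant under the rescaling and apply Young's inequality, $\|K(\cdot,t)\|_{L^1}\le\|K(\cdot,1)\|_{L^1}\,\|G(\cdot,1-t^{1/3})\|_{L^1}$, bounding the second factor uniformly on $(0,\frac{1}{2}]$ by Lemma \ref{Lemma:Kernel-G}; this is legitimate since that lemma precedes the present one, and it is more elementary than the paper's route. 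What the paper's heavier argument buys is the estimate (\ref{eq:unif1}) itself, which is not a throwaway: it is reused in Lemma \ref{Lemma:unif} to obtain the tail bound $\int_{|y|>A}\big|K(\cdot,1)*G(\cdot,1-h^{1/3})\big|(y)\,dy\le C/\sqrt{A}$ uniformly in $h$, which is essential for the strong continuity of the semigroup at $t=0$; a plain Young-type $L^1$ bound cannot yield such tail decay. So as a proof of this lemma in isolation yours is the more economical, while the paper's version front-loads machinery needed later. (One cosmetic remark: $|h(\xi)|=e^{-t[\xi^2-a|\xi|^{4/3}]}$ decays \emph{like} a Gaussian, not faster than every Gaussian, but nothing in your argument depends on that phrasing.)
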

\begin{proof}
Let $t>0$.  It follows from (\ref{eq:K}) that
\begin{equation*}
  \partial_\xi \hat K(\xi,t) = -t\Big[2\xi-\frac43 \xi^{1/3} 
  \Phi(\xi) \Big] \hat K(\xi,t), \;\;\;\; \text{for} \; \xi \not= 0, 
\end{equation*}
and 
\begin{equation*}
  \partial_\xi^2 \hat K(\xi,t) = \Big\{ -t\Big[ 2-\frac49 \xi^{-2/3} \Phi(\xi)
  \Big] +t^2 \Big[ 2\xi-\frac43 \xi^{1/3} 
  \Phi(\xi) \Big]^2  \Big\} \hat K(\xi,t),  \;\; \text{for} \; 
  \xi \not= 0.
\end{equation*}
Then $\hat K(\cdot,t) \in C_{\infty}(\R) \cap C^2(\R \setminus \{ 0 \})$.  Moreover, 
$|\partial_\xi \hat K (\xi,t)| \rightarrow 0$ as $|\xi| \rightarrow + \infty$, and 
$\partial_\xi \hat K(0^+,t)=0=\partial_\xi \hat K(0^-,t)$.  Furthermore,
\begin{equation*}
  \|\hat K (\cdot,t)\|_{L^1} = 2\int_0^{+\infty} e^{-t[\xi^2-a\xi^{4/3}]} d\xi 
  \le C\Big[ e^{\frac{4}{27}a^3t} +\frac{1}{\sqrt t} \Big],
\end{equation*}
where the last inequality is a consequence of Lemma \ref{Lemma:calculus2}.  
Again using Lemma \ref{Lemma:calculus2}, we see that 
\begin{eqnarray*}
  \|\partial_\xi \hat K (\cdot,t)\|_{L^1} &\le& C\big[ 1+t^{1/3} 
  +t \; e^{\frac{4}{27}a^3t}\big], \;\;\; \text{and} \\
  \|\partial_\xi^2 \hat K (\cdot,t)\|_{L^1} &\le& C\big[ \sqrt t + t^{5/6} 
  + t^{7/6} + (t+t^2) e^{\frac{4}{27}a^3t}\big] 
  \le C\big[\sqrt t + t^2 e^{\frac{4}{27}a^3t} \big].
\end{eqnarray*}
Lemma \ref{Lemma:calculus}, applied to $h(\cdot)= \hat K(\cdot,t)$, implies that 
\begin{equation}\label{eq:Kernel-K-1a}
  \| K(\cdot, t) \|_{L^1} 
  \le C \Big[ \frac{1}{\sqrt t} + t^2 e^{\frac{4}{27}a^3t} \Big], \;\;  
  \text{ for all } t>0.
\end{equation}
Suppose now that $t\in(0,1)$. Then   
\begin{eqnarray}\label{eq:Kernel-K-1b}
 \int |K(x,t)|dx 
 &=& \frac{1}{\sqrt t}
     \int \big| K(\cdot,1) * G(\cdot,1-t^{1/3})\big|(x/\sqrt t) dx \nonumber \\
 &=& \int (1+y^2)^{1/2} \frac{\big| K(\cdot,1) * G(\cdot,1-t^{1/3})
     \big|(y)}{(1+y^2)^{1/2}} dy 
     \nonumber \\
 &\le& C \; \big{\|} \hat K(\cdot,1) \hat G(\cdot, 1-t^{1/3})\big{\|}_{H^1},
\end{eqnarray}
where the first equality above comes from (\ref{eq:Kernel}). From  
the fact that $e^{-(1-h^{1/3}) a \xi^{4/3} } \le 1$, for all $h\in[0,1)$, 
$\xi \in \R$, and equation (\ref{eq:Kernel-G-1a}) we have that 
$|\hat G(\xi,1-h^{1/3})| \le C$, and 
$|\partial_\xi \hat G(\xi,1-h^{1/3})| \le C |\xi|^{1/3}$ for all 
$\xi \in \R$.  Now, from Lemma \ref{Lemma:calculus2}, we obtain 
\begin{eqnarray}\label{eq:unif1}
 &&\!\!\!\!\!\!\!\!\!\!\!\!\!\!\!\!\!\!\!\!
   \|\hat K(\cdot,1) \hat G (\cdot,1-h^{1/3})\|_{H^1} 
   \le \|\hat K(\cdot,1) \hat G (\cdot,1-h^{1/3})\|_{L^2} \nonumber \\
 &&+ \|\partial_\xi \hat K(\cdot,1) \hat G (\cdot,1-h^{1/3})\|_{L^2} 
   + \|\hat K(\cdot,1) \partial_\xi \hat G (\cdot,1-h^{1/3})\|_{L^2} \le C, 
\end{eqnarray}
for all $h\in [0,1)$.  The assertion of the lemma now follows from 
(\ref{eq:Kernel-K-1a})-(\ref{eq:unif1}).
\end{proof}
The following result gives an upper bound for $\|\partial_x K(\cdot,t) \|_{L^1}$  
when $t \in (0,1)$.
%%%%%%%%%%%%%%%%%%%%%%%%%%%%%%%%%%%%%%%%%%%%%%%%%%%%%%%%%%%%%%%%%%%%%%%%%
\begin{Lemma}\label{Lemma:Kernel-Kd-0} 
Let $t\in (0,1)$.  Then,  $K(\cdot,t) \in L^1(\R) \cap C^1(\R) \cap W^{1,\infty}(\R)$.  
In addition, $\partial_x K(\cdot,t)(x) = t^{-1} \big(\partial_x K(\cdot,1) * 
G(\cdot, 1-t^{1/3})\big)(t^{-1/2}x)$ for all $x \in \R$,  
$\partial_x K(\cdot,t) \in L^1(\R) \cap C_{\infty}(\R)$, and
\begin{equation}\label{eq:Kernel-Kd-0}
  \|\partial_x K(\cdot,t) \|_{L^1} \le \frac{C}{\sqrt t}
  \big[(1-t^{1/3})^{3/4}+ (1-t^{1/3})^{-3/4}  \big], 
\end{equation}
where $C$ is a positive constant independent of $t$.
\end{Lemma}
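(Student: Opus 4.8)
The plan is to reduce everything to the case $t=1$ by means of the scaling identity (\ref{eq:Kernel}) from Remark \ref{Remark:Kernel}, which reads $K(x,t)=t^{-1/2}\big(K(\cdot,1)*G(\cdot,1-t^{1/3})\big)(t^{-1/2}x)$ for $t\in(0,1)$. Differentiating this relation should move the spatial derivative onto the first convolution factor and produce exactly the claimed formula $\partial_x K(\cdot,t)(x)=t^{-1}\big(\partial_x K(\cdot,1)*G(\cdot,1-t^{1/3})\big)(t^{-1/2}x)$. To justify the differentiation rigorously and to extract the regularity assertions, I would invoke Lemma \ref{Lemma:calculus3} with $g=K(\cdot,1)$ and $l=G(\cdot,1-t^{1/3})$: it guarantees that the convolution lies in $C^1(\R)\cap W^{1,\infty}(\R)$ and that its derivative is $\partial_x K(\cdot,1)*G(\cdot,1-t^{1/3})$. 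The $L^1$ bound would then come from a change of variables together with Young's inequality and the estimate (\ref{eq:Kernel-G-1}) of Lemma \ref{Lemma:Kernel-G}.

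The main obstacle, and the only genuinely new ingredient, is verifying the hypotheses of Lemma \ref{Lemma:calculus3}: I must first show that $K(\cdot,1)\in W^{1,1}(\R)$, i.e. that $\partial_x K(\cdot,1)\in L^1(\R)$, since $K(\cdot,1)\in L^1(\R)$ is already supplied by Lemma \ref{Lemma:Kernel-K} and the second factor $l=G(\cdot,1-t^{1/3})$ lies in $C_\infty(\R)\subset L^\infty(\R)$ by Lemma \ref{Lemma:Kernel-G}. Because the Gaussian factor $e^{-\xi^2}$ dominates the sub-quadratic term $\xi^{4/3}\Phi(\xi)$ in (\ref{eq:K}), we have $\xi\,\hat K(\xi,1)\in L^1(\R)$, so $\partial_x K(\cdot,1)$ is the classical derivative and equals $\F^{-1}\big(i\xi\,\hat K(\xi,1)\big)$. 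To see it is integrable I would apply Lemma \ref{Lemma:calculus} to $h(\xi):=i\xi\,\hat K(\xi,1)$; checking its hypotheses is the heart of the argument. Using the formulas for $\partial_\xi\hat K(\cdot,1)$ and $\partial_\xi^2\hat K(\cdot,1)$ already computed in the proof of Lemma \ref{Lemma:Kernel-K}, one finds that $h\in L^1(\R)\cap C_\infty(\R)\cap C^2(\R\setminus\{0\})$, that $h'$ and $h''$ belong to $L^1(\R)$ (the worst term $\xi\,\partial_\xi^2\hat K$ behaves like $|\xi|^{1/3}$ near the origin and decays faster than any power at infinity), and, crucially, that $h'(0^+)=h'(0^-)=i$, so the jump term in (\ref{eq:calculus}) vanishes. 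Lemma \ref{Lemma:calculus} then yields $\F h\in L^1(\R)\cap C_\infty(\R)$, and since $\F^{-1}h(x)=\F h(-x)$ for the symmetric convention in force, I conclude $\partial_x K(\cdot,1)=\F^{-1}h\in L^1(\R)\cap C_\infty(\R)$.

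With $K(\cdot,1)\in W^{1,1}(\R)$ in hand the assembly is routine. Lemma \ref{Lemma:calculus3} gives $F:=K(\cdot,1)*G(\cdot,1-t^{1/3})\in C^1(\R)\cap W^{1,\infty}(\R)$ with $F'=\partial_x K(\cdot,1)*G(\cdot,1-t^{1/3})$; applying the chain rule to $K(x,t)=t^{-1/2}F(t^{-1/2}x)$ then produces $K(\cdot,t)\in C^1(\R)\cap W^{1,\infty}(\R)$ and the stated differentiation formula, while $K(\cdot,t)\in L^1(\R)$ is Lemma \ref{Lemma:Kernel-K}. Since $\partial_x K(\cdot,1)$ and $G(\cdot,1-t^{1/3})$ both lie in $L^1(\R)\cap C_\infty(\R)$, their convolution does too, and this is preserved under the rescaling $x\mapsto t^{-1/2}x$, giving $\partial_x K(\cdot,t)\in L^1(\R)\cap C_\infty(\R)$. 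Finally, the substitution $y=t^{-1/2}x$ yields
\begin{equation*}
  \|\partial_x K(\cdot,t)\|_{L^1}=t^{-1/2}\,\big\|\partial_x K(\cdot,1)*G(\cdot,1-t^{1/3})\big\|_{L^1}\le t^{-1/2}\,\|\partial_x K(\cdot,1)\|_{L^1}\,\|G(\cdot,1-t^{1/3})\|_{L^1},
\end{equation*}
and since $\|\partial_x K(\cdot,1)\|_{L^1}$ is a finite constant independent of $t$, inserting the bound (\ref{eq:Kernel-G-1}) of Lemma \ref{Lemma:Kernel-G} gives precisely the estimate (\ref{eq:Kernel-Kd-0}).
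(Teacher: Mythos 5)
Your proposal is correct and follows essentially the same route as the paper's proof: both hinge on applying Lemma \ref{Lemma:calculus} to the function $\xi \mapsto \xi \hat K(\xi,1)$ (your factor of $i$ is immaterial) to obtain $\partial_x K(\cdot,1)\in L^1(\R)\cap C_\infty(\R)$, hence $K(\cdot,1)\in W^{1,1}(\R)$, then invoke Lemma \ref{Lemma:calculus3} on the scaling identity (\ref{eq:Kernel}), and finish with the change of variables, Young's inequality, and Lemma \ref{Lemma:Kernel-G}. The only cosmetic difference is that the paper secures the $L^\infty$ hypothesis on $G(\cdot,1-t^{1/3})$ via an explicit bound from Lemma \ref{Lemma:calculus1}, whereas you use $C_\infty(\R)\subset L^\infty(\R)$ from Lemma \ref{Lemma:Kernel-G}, which is equally valid.
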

\begin{proof}
Let $f$ denote the function given by $f(\xi):= \xi \hat K(\xi,1)$ 
for all $\xi \in \R$.  Then  $f \in C_\infty(\R)\cap C^2(\R)$.  Furthermore,
\begin{equation*}
  f'(\xi)= \Big[1-\xi\Big(2\xi-\frac43 \xi^{1/3} \Phi(\xi)  
  \Big)  \Big]  \hat K (\xi,1), 
  \;\;\; \text{for} \;\; \xi \not = 0, 
\end{equation*}
and
\begin{equation*}
  f''(\xi)= \Big\{ \Big[-4\xi+\frac{16}{9} \xi^{1/3} \Phi(\xi)\Big]
  -\Big[1-\xi\Big(2\xi-\frac43 \xi^{1/3} \Phi(\xi)\Big)  \Big]  
   \Big[2\xi-\frac43\xi^{1/3} \Phi(\xi)  \Big] \Big\} \hat K (\xi,1),  
\end{equation*} 
for $\xi \not = 0$. By using Lemma \ref{Lemma:calculus2}, it follows that 
$\|f \|_{L^1} \le C$, $\|f' \|_{L^1} \le C$, and $\|f'' \|_{L^1} \le C$.  
Moreover, $|f' (\xi)| \rightarrow 0$ as $|\xi| \rightarrow + \infty$, and 
$f'(0^+)=1=f'(0^-)$.  Thus, Lemma \ref{Lemma:calculus} implies that 
$\partial_x K(\cdot,1) \in C_{\infty}(\R) \cap L^1(\R)$.   Therefore, 
using Lemma \ref{Lemma:Kernel-K}, we have that $K(\cdot,1)\in W^{1,1}(\R)$. 
Let $t\in (0,1)$.  Lemma \ref{Lemma:calculus1} implies that
\begin{equation}\label{eq:Kernel-Kd-0b}
  \| G(\cdot,1-t^{1/3}) \|_{L^\infty} \le C \cdot (1-t^{1/3})^{-{3/4}}.
\end{equation}
Thus, applying Lemma \ref{Lemma:calculus3} to 
equation (\ref{eq:Kernel}), taking into account (\ref{eq:Kernel-Kd-0b}), 
one sees that $K(\cdot,t) \in C^1(\R)\cap W^{1,\infty}(\R)$, and 
$\partial_x K(\cdot,t)(x)=t^{-1}\big( \partial_xK(\cdot,1) 
*G(\cdot,1-t^{1/3})\big)(t^{-1/2}x)$ for all $x \in \R$.  Furthermore, 
\begin{eqnarray*}
  \| \partial_x K(\cdot,t) \|_{L^1} 
  &=& t^{-1/2} \int |\partial_x K(\cdot,1) * G(\cdot,1-t^{1/3}) |(y)dy \\
  &\le& Ct^{-1/2}\big[(1-t^{1/3})^{3/4} + (1-t^{1/3})^{-3/4}  \big],
\end{eqnarray*}
where in the last step we have used Young's inequality and 
Lemma \ref{Lemma:Kernel-G}.
\end{proof}
Next lemma will be useful to study $\|\partial_x K(\cdot, t)\|_{L^1}$ for 
$t \ge t_0$, where $t_0>0$.
%%%%%%%%%%%%%%%%%%%%%%%%%%%%%%%%%%%%%%%%%%%%%%%%%%%%%%%%%%%%%%%%%%%%%%%%%
\begin{Lemma}\label{Lemma:Kernel-Kd-00} 
Suppose that $t>0$.  Then, $\partial_x K(\cdot,t) \in L^1(\R) \cap C_{\infty}(\R)$, 
and 
\begin{equation}\label{eq:Kernel-Kd-00}
 \|\partial_x K(\cdot, t)\|_{L^1}  \le C\Big[ \frac{1}{t} 
 +t^2  e^{\frac{4}{27}a^3t}\Big], 
\end{equation}
where $C$ is a positive constant independent of $t$.
\end{Lemma}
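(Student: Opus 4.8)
The plan is to proceed exactly as in the proof of Lemma \ref{Lemma:Kernel-K}, applying the calculus result Lemma \ref{Lemma:calculus} to the Fourier symbol of $\partial_x K(\cdot,t)$ and then estimating the resulting $L^1$-norms through Lemma \ref{Lemma:calculus2}. Since, by (\ref{eq:K}), the function $f(\xi):=\xi\,\hat K(\xi,t)$ is (up to the harmless factor $i$) the Fourier transform of $\partial_x K(\cdot,t)$, applying Lemma \ref{Lemma:calculus} to $h=f$ simultaneously yields the membership $\partial_x K(\cdot,t)\in L^1(\R)\cap C_\infty(\R)$ and the bound
$$\|\partial_x K(\cdot,t)\|_{L^1}\le \sqrt{\tfrac{2}{\pi}}\big[\|f\|_{L^1}+|f'(0^+)-f'(0^-)|+\|f''\|_{L^1}\big].$$
So the whole problem reduces to verifying the hypotheses of Lemma \ref{Lemma:calculus} for $f$ and controlling the three terms on the right, keeping careful track of the dependence on $t$.

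First I would record, for $\xi\neq 0$, writing $P(\xi):=2\xi-\frac43\xi^{1/3}\Phi(\xi)$ so that $\partial_\xi\hat K(\xi,t)=-tP(\xi)\hat K(\xi,t)$,
$$f'(\xi)=\big[1-t\xi P(\xi)\big]\hat K(\xi,t),\qquad f''(\xi)=\Big[-6t\xi+\tfrac{28}{9}t\,\xi^{1/3}\Phi(\xi)+t^2\xi P(\xi)^2\Big]\hat K(\xi,t).$$
Because $|\hat K(\xi,t)|=e^{-t[\xi^2-a|\xi|^{4/3}]}\le e^{-t\xi^2/2}$ for $|\xi|\ge (2a)^{3/2}$, the symbol decays faster than any power of $|\xi|$, so $f\in C_\infty(\R)\cap C^2(\R\setminus\{0\})$, the functions $f,f',f''$ all lie in $L^1(\R)$, and $|f'(\xi)|\to 0$ as $|\xi|\to\infty$. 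The one delicate point at $\xi=0$ is benign: the apparently singular term $\xi^{-2/3}\Phi$ produced when differentiating $\xi^{1/3}\Phi$ is always carried with an extra factor of $\xi$, so near the origin $f''$ involves only the integrable powers $|\xi|^{1/3},|\xi|,|\xi|^{5/3},|\xi|^{7/3},|\xi|^{3}$. The structural fact that must be checked rather than merely estimated is the jump condition: as $\xi\to 0^{\pm}$ one has $t\xi P(\xi)\to 0$ and $\hat K(\xi,t)\to 1$, whence $f'(0^+)=f'(0^-)=1$ and the middle term $|f'(0^+)-f'(0^-)|$ drops out of the estimate above.

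It then remains to estimate the two surviving norms by Lemma \ref{Lemma:calculus2} with $\beta=a$. Directly, $\|f\|_{L^1}=I(1,a,t)\le C\big[e^{\frac{4}{27}a^3t}+t^{-1}\big]$; expanding $\xi P(\xi)^2$ into the monomials $\xi^3,\xi^{7/3},\xi^{5/3}$, using $|\Phi(\xi)|=\Gamma(\frac23)$, and applying Lemma \ref{Lemma:calculus2} term by term gives $\|f''\|_{L^1}\le C\big[1+t^{2/3}+t^2e^{\frac{4}{27}a^3t}\big]$. Inserting these two bounds into the displayed inequality produces $\|\partial_x K(\cdot,t)\|_{L^1}\le C\big[t^{-1}+1+t^{2/3}+e^{\frac{4}{27}a^3t}+t^2e^{\frac{4}{27}a^3t}\big]$, and a one-line case distinction finishes the argument: for $0<t\le 1$ each of $1$, $t^{2/3}$, $e^{\frac{4}{27}a^3t}$ is bounded by a constant multiple of $1/t$, while for $t\ge 1$ each is bounded by a constant multiple of $t^2e^{\frac{4}{27}a^3t}$. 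This yields (\ref{eq:Kernel-Kd-00}). The only real labour is the bookkeeping in computing $f''$ and confirming that the origin contributes no non-integrable singularity; everything else is a mechanical application of Lemma \ref{Lemma:calculus2} followed by the final absorption of lower-order terms.
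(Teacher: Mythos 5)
Your proof is correct and takes essentially the same route as the paper: the paper likewise applies Lemma \ref{Lemma:calculus} to $h(\xi,t):=\xi\hat K(\xi,t)$, observes that $\partial_\xi h(0^+,t)=1=\partial_\xi h(0^-,t)$ so the jump term vanishes, and bounds the relevant $L^1$-norms via Lemma \ref{Lemma:calculus2} before absorbing lower-order terms into $C\big[\frac{1}{t}+t^2e^{\frac{4}{27}a^3t}\big]$. Your expression for $f''$ is exactly the paper's $\partial_\xi^2 h$ written in expanded form, so the two arguments coincide.
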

\begin{proof}
Let $t>0$.  For $\xi \in \R$, we define $h(\xi,t):=\xi \hat K(\xi,t)$.  Then 
\begin{equation*}
  \partial_\xi h(\xi,t) = \Big[ 1-t\Big(2\xi^2-\frac43 \xi^{\frac43} \Phi(\xi)\Big)  
  \Big] \hat K(\xi,t), 
  \;\;\;\; \text{for} \;\; \xi \not = 0,
\end{equation*}
and 
\begin{equation*}
  \partial_\xi^2 h(\xi,t) = -t\Big\{ 
  \Big[4\xi -\frac{16}{9}\xi^{\frac13} \Phi(\xi)  \Big]
  +\Big[ 1-t\Big(2\xi^2-\frac43 \xi^{\frac43} \Phi(\xi)\Big) \Big] 
  \Big[2\xi -\frac43 \xi^{\frac13}\Phi (\xi)  \Big]
  \Big\}\hat K(\xi,t), 
\end{equation*}
for $\xi \not = 0 $.   We see that 
$h(\cdot,t)\in C_{\infty}(\R) \cap C^2(\R)$, 
$|\partial_\xi h(\xi,t)|\rightarrow 0$ as $|\xi| \rightarrow + \infty$, and 
$\partial_\xi h (0^+,t)=1=\partial_\xi h(0^-,t)$.  Moreover, by Lemma 
\ref{Lemma:calculus2}, we have that
\begin{eqnarray*}
   \|h(\cdot,t)\|_{L^1} &\le& C\Big[\frac{1}{t} + e^{\frac{4}{27}a^3t} \Big], \\
   \|\partial_\xi h(\cdot,t)\|_{L^1} &\le& 
   C\Big[\frac{1}{t^{1/2}} + \frac{1}{t^{1/6}} +(1+t)e^{\frac{4}{27}a^3t} \Big],  
   \;\;\;\text{and} \\
   \|\partial_\xi^2 h(\cdot,t)\|_{L^1} &\le& 
   C\Big[1+t^{1/3}+t^{2/3} +(t+t^2)e^{\frac{4}{27}a^3t} \Big].
\end{eqnarray*}
The proof of the lemma is now completed by applying Lemma \ref{Lemma:calculus}.
\end{proof}
The next result provides a unified upper bound for 
$\|\partial_x  K(\cdot, t) \|_{L^1}$ for any $t>0$, which takes the best 
of the corresponding bounds obtained in Lemmas \ref{Lemma:Kernel-Kd-0} 
and \ref{Lemma:Kernel-Kd-00}.
%%%%%%%%%%%%%%%%%%%%%%%%%%%%%%%%%%%%%%%%%%%%%%%%%%%%%%%%%%%%%%%%%%%%%%%%%
\begin{Lemma}\label{Lemma:Kernel-Kd} 
Suppose that $t>0$. Then the function 
$\partial_x K(\cdot,t) \in L^1(\R) \cap C_{\infty}(\R)$.  
Moreover, 
\begin{equation}\label{eq:Kernel-Kd-1}
  \|\partial_x  K(\cdot, t) \|_{L^1} 
  \le C  \Big[\frac{1}{\sqrt t}+t^2 e^{\frac{4}{27}a^3t}\Big], 
\end{equation}
where $C$ is a positive constant independent of $t$.
\end{Lemma}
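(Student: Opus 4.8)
The plan is to obtain the unified bound by patching together the two estimates already at our disposal: Lemma \ref{Lemma:Kernel-Kd-0}, which is sharp for small $t$ but degenerates as $t\uparrow 1$, and Lemma \ref{Lemma:Kernel-Kd-00}, which holds for all $t>0$ but only supplies the weaker factor $1/t$ near the origin. The regularity claim $\partial_x K(\cdot,t)\in L^1(\R)\cap C_{\infty}(\R)$ needs no new argument, since it is already asserted by Lemma \ref{Lemma:Kernel-Kd-00} for every $t>0$; so only the norm estimate (\ref{eq:Kernel-Kd-1}) remains to be proved. The idea is simply to fix a threshold $t_*\in(0,1)$, say $t_*=1/2$, and treat the ranges $0<t\le t_*$ and $t\ge t_*$ with the estimate best suited to each.

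On the interval $(0,t_*]$ I would use Lemma \ref{Lemma:Kernel-Kd-0}. Here the quantity $1-t^{1/3}$ remains in the compact set $[\,1-t_*^{1/3},\,1\,]\subset(0,1]$, so both $(1-t^{1/3})^{3/4}$ and the singular factor $(1-t^{1/3})^{-3/4}$ are bounded above by constants depending only on $t_*$. Inserting this into (\ref{eq:Kernel-Kd-0}) yields $\|\partial_x K(\cdot,t)\|_{L^1}\le C/\sqrt{t}$ for $t\in(0,t_*]$, which is clearly dominated by the right-hand side of (\ref{eq:Kernel-Kd-1}). For $t\ge t_*$ I would instead invoke Lemma \ref{Lemma:Kernel-Kd-00} directly; the only term requiring reshaping is $1/t$. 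Since $t\ge t_*$ gives $1/\sqrt{t}\le 1/\sqrt{t_*}$, one has $1/t=(1/\sqrt{t})\cdot(1/\sqrt{t})\le (1/\sqrt{t_*})\,(1/\sqrt{t})$, so $1/t\le C/\sqrt{t}$ on $[t_*,\infty)$. Substituting this into (\ref{eq:Kernel-Kd-00}) gives $\|\partial_x K(\cdot,t)\|_{L^1}\le C[\,1/\sqrt{t}+t^2 e^{\frac{4}{27}a^3 t}\,]$ for $t\ge t_*$. Combining the two ranges and enlarging $C$ if necessary produces (\ref{eq:Kernel-Kd-1}) for every $t>0$.

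There is no genuine obstacle here; the content is bookkeeping. The only point meriting care is that the splitting be made at a threshold strictly below $1$: this ensures that the singular factor $(1-t^{1/3})^{-3/4}$ coming from Lemma \ref{Lemma:Kernel-Kd-0} is controlled on the lower range, while the large-$t$ exponential growth is absorbed by the $t^2 e^{\frac{4}{27}a^3 t}$ term supplied by Lemma \ref{Lemma:Kernel-Kd-00} on the upper range. Any fixed $t_*\in(0,1)$ serves equally well.
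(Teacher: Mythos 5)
Your proposal is correct and is exactly the argument the paper intends: the paper states this lemma without a written proof, remarking only that it ``takes the best of the corresponding bounds'' from Lemmas \ref{Lemma:Kernel-Kd-0} and \ref{Lemma:Kernel-Kd-00}, which is precisely your splitting at a threshold $t_*\in(0,1)$ with the regularity claim inherited from Lemma \ref{Lemma:Kernel-Kd-00}. Your write-up simply makes the implicit bookkeeping explicit, and it is sound.
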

%%%%%%%%%%%%%%%%%%%%%%%%%%%%%%%%%%%%%%%%%%%%%%%%%%%%%%%%%%%%%%%%%%%%%%%%%
The following lemma will be used in the proof of Lemma \ref{Lemma:semig} 
below.
\begin{Lemma}\label{Lemma:unif}
\begin{equation}\label{eq:unif}
  \lim_{A \rightarrow +\infty } \int_{|y|>A}
  \big|K(\cdot,1) * G(\cdot, 1-h^{1/3})\big|(y) dy =0,  
  \;\;\; \text{ uniformly in } h \in [0,1).
\end{equation}
\end{Lemma}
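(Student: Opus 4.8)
The plan is to deduce this uniform tail estimate from the uniform $H^1$ bound on $\hat K(\cdot,1)\hat G(\cdot,1-h^{1/3})$ already established in (\ref{eq:unif1}), by transferring that bound to a weighted $L^2$ estimate on the convolution itself and then splitting off the tail with the Cauchy--Schwarz inequality. Note first that, by Lemmas \ref{Lemma:Kernel-G} and \ref{Lemma:Kernel-K}, both $K(\cdot,1)$ and $G(\cdot,1-h^{1/3})$ lie in $L^1(\R)$, so Young's inequality guarantees that the convolution is in $L^1(\R)$ and the integral in (\ref{eq:unif}) is finite for each fixed $h$.

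First I would set $F_h := K(\cdot,1) * G(\cdot,1-h^{1/3})$, so that $\widehat{F_h} = \sqrt{2\pi}\,\hat K(\cdot,1)\hat G(\cdot,1-h^{1/3})$. By Plancherel's theorem applied to each summand of the $H^1$-norm, using $\|\hat g\|_{L^2}=\|g\|_{L^2}$ and $\|\partial_\xi\hat g\|_{L^2}=\|y\,g\|_{L^2}$, one obtains
$$
  \|(1+y^2)^{1/2}F_h\|_{L^2} = \|\widehat{F_h}\|_{H^1} = \sqrt{2\pi}\,\|\hat K(\cdot,1)\hat G(\cdot,1-h^{1/3})\|_{H^1}.
$$
Estimate (\ref{eq:unif1}) then furnishes a constant $C$, independent of $h$, with $\|(1+y^2)^{1/2}F_h\|_{L^2}\le C$ for all $h\in[0,1)$.

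Next, for $A>0$ I would bound the tail by Cauchy--Schwarz, writing $|F_h(y)| = (1+y^2)^{1/2}|F_h(y)|\cdot(1+y^2)^{-1/2}$:
$$
  \int_{|y|>A}|F_h(y)|\,dy \le \|(1+y^2)^{1/2}F_h\|_{L^2}\Big(\int_{|y|>A}\frac{dy}{1+y^2}\Big)^{1/2} \le C\Big(\int_{|y|>A}\frac{dy}{1+y^2}\Big)^{1/2}.
$$
The right-hand side is independent of $h$, and since $\int_{|y|>A}(1+y^2)^{-1}\,dy = 2\big(\frac{\pi}{2}-\arctan A\big)\to 0$ as $A\to+\infty$, it tends to zero. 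This yields (\ref{eq:unif}) with convergence uniform in $h\in[0,1)$.

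The only substantial input is the uniform weighted $L^2$ bound, and that is precisely what (\ref{eq:unif1}) supplies; since the weight $(1+y^2)^{-1}$ is integrable, the remaining steps are routine. The main point requiring care is bookkeeping with the normalization constant $\sqrt{2\pi}$ in $\widehat{f*g}=\sqrt{2\pi}\,\hat f\hat g$ and the identification of $\|\widehat{F_h}\|_{H^1}$ with $\|(1+y^2)^{1/2}F_h\|_{L^2}$, neither of which affects the uniformity in $h$.
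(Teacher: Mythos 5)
Your proposal is correct and follows essentially the same route as the paper: both bound the tail integral via Cauchy--Schwarz against the weight $(1+y^2)^{-1}$ and control the weighted $L^2$ norm of the convolution by the uniform $H^1$ bound (\ref{eq:unif1}) on $\hat K(\cdot,1)\hat G(\cdot,1-h^{1/3})$, via Plancherel. The only differences are cosmetic: you make the Plancherel identification and the $\sqrt{2\pi}$ bookkeeping explicit, and compute the tail of $\int(1+y^2)^{-1}dy$ exactly where the paper simply bounds it by $C/A$.
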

\begin{proof}
We recall that  
\begin{equation*} 
  \hat K(\xi,1)=e^{-[\xi^2-\xi^{4/3} \Phi(\xi)]}, 
  \;\;\text{ and }  \;\;
  \hat G (\xi,1-h^{1/3}) = \frac{1}{\sqrt{2\pi}} 
  e^{-(1-h^{1/3}) \xi^{4/3} \Phi(\xi)}.
\end{equation*}  
From (\ref{eq:unif1}) we see that
\begin{eqnarray*}
   &&\int_{|y|>A} \big|K(\cdot,1) * G(\cdot, 1-h^{1/3})\big|(y) dy \\
   &&\le \Big( \int_{|y|>A} \frac{dy}{1+y^2} \Big)^{\frac12}
   \Big(\int (1+y^2) \big|K(\cdot,1) * G(\cdot, 1-h^{1/3})(y)\big|^2 
   dy \Big)^{\frac12}  \\
   &&\le  \frac{C}{\sqrt A} \; \| \hat K(\cdot,1) 
     \hat G(\cdot, 1-h^{1/3}) \|_{H^1} \le \frac{C'}{\sqrt A}.
\end{eqnarray*}
This concludes the proof.
\end{proof}

%%%%%%%%%%%%%%%%%%%%%%%%%%%%%%%%%%%%%%%%%%%%%%%%%%%%%%%%%%%%%%%%%%%%%%%%%
The next lemma shows that $(E(t))_{t \ge0}$ is a $C^0$-semigroup on the 
Banach space $Y$ and also on the Banach space $X$.
\begin{Lemma}\label{Lemma:semig}
{\bf{i.)}}  If $u_0 \in C_b(\R)$, then $u(t):=E(t)u_0 \in C_b(\R)$ for every  
$t\ge0$.  In addition,  
 \begin{equation}\label{eq:semig}
    \|E(t)\|_{\L (C_b(\R))} \le C \cdot \big(1 +t^2 
    e^{\frac{4}{27}a^3t} \big), \;\;\; \text{ for all } t>0.
 \end{equation}
Moreover, $(E(t))_{t \ge0}$ is a $C^0$-semigroup on $Y$.  

\noindent
{\bf{ii.)}}  $(E(t))_{t \ge0}$ is a $C^0$-semigroup on $X$.  Furthermore, 
(\ref{eq:semig}) remains true if the space $C_b(\R)$ is replaced by $C_b^1(\R)$.
\end{Lemma}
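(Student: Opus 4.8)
The plan is to verify the three defining properties of a $C^0$-semigroup — local boundedness with the stated operator norm, the semigroup law, and strong continuity — first on $Y$, and then to transfer everything to $X$ by differentiating the convolution. Throughout, $E(t)u_0=\frac{1}{\sqrt{2\pi}}K(\cdot,t)*u_0$ and I will repeatedly use that $K(\cdot,t)\in L^1(\R)$ (Lemma \ref{Lemma:Kernel-K}). For the mapping property and the bound (\ref{eq:semig}) I would argue as follows: for $u_0\in C_b(\R)$ Young's inequality gives $\|E(t)u_0\|_{L^\infty}\le\frac{1}{\sqrt{2\pi}}\|K(\cdot,t)\|_{L^1}\|u_0\|_{L^\infty}$, and continuity of translations in $L^1(\R)$ yields $|(E(t)u_0)(x+h)-(E(t)u_0)(x)|\le\frac{1}{\sqrt{2\pi}}\|K(\cdot+h,t)-K(\cdot,t)\|_{L^1}\|u_0\|_{L^\infty}\to0$, so $E(t)u_0\in C_b(\R)$; combined with Lemma \ref{Lemma:Kernel-K} this proves the first assertion and (\ref{eq:semig}). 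If moreover $u_0\in Y$, then bounding $(E(t)u_0)(x+h)-(E(t)u_0)(x)=\frac{1}{\sqrt{2\pi}}\int K(y,t)\big(u_0(x+h-y)-u_0(x-y)\big)dy$ by $\frac{1}{\sqrt{2\pi}}\|K(\cdot,t)\|_{L^1}\,\omega_{u_0}(|h|)$, with $\omega_{u_0}$ the modulus of continuity of $u_0$, shows $E(t)u_0$ is uniformly continuous, i.e. $E(t):Y\to Y$.

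For the semigroup law I would establish the kernel identity $K(\cdot,t)*K(\cdot,s)=\sqrt{2\pi}\,K(\cdot,t+s)$ for $t,s>0$ on the Fourier side: $\widehat{K(\cdot,t)*K(\cdot,s)}=\sqrt{2\pi}\,\hat K(\cdot,t)\hat K(\cdot,s)=\sqrt{2\pi}\,\hat K(\cdot,t+s)$ by (\ref{eq:K}), and both members are integrable functions with the same Fourier transform, hence coincide. Since $K(\cdot,t),K(\cdot,s)\in L^1(\R)$ and $u_0\in L^\infty(\R)$, Fubini's theorem justifies associativity of the convolution, so $E(t)E(s)u_0=\frac{1}{2\pi}\big(K(\cdot,t)*K(\cdot,s)\big)*u_0=\frac{1}{\sqrt{2\pi}}K(\cdot,t+s)*u_0=E(t+s)u_0$; together with $E(0)=I$ this gives the law on $Y$.

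The hard part will be strong continuity at $t=0$, because $K(\cdot,t)$ changes sign (Remark \ref{Remark:nonmonotone}), so the classical mollifier argument is unavailable. The plan is to exploit the self-similar structure. First, $\frac{1}{\sqrt{2\pi}}\int K(y,t)\,dy=\hat K(0,t)=1$ by (\ref{eq:K}), so $(E(t)u_0)(x)-u_0(x)=\frac{1}{\sqrt{2\pi}}\int K(y,t)\big(u_0(x-y)-u_0(x)\big)dy$. For $t\in(0,1)$ I insert the rescaling (\ref{eq:Kernel}), $K(y,t)=t^{-1/2}\big(K(\cdot,1)*G(\cdot,1-t^{1/3})\big)(t^{-1/2}y)$, and substitute $z=t^{-1/2}y$ to obtain
\[
  \|E(t)u_0-u_0\|_{L^\infty}\le\frac{1}{\sqrt{2\pi}}\int\big|K(\cdot,1)*G(\cdot,1-t^{1/3})\big|(z)\,\omega_{u_0}\big(t^{1/2}|z|\big)\,dz.
\]
Given $\varepsilon>0$, I pick $\delta>0$ with $\omega_{u_0}(s)<\varepsilon$ for $s<\delta$ and split at $t^{1/2}|z|=\delta$. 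On $\{t^{1/2}|z|<\delta\}$ the integrand is $\le\varepsilon\,|K(\cdot,1)*G(\cdot,1-t^{1/3})|(z)$, whose integral equals $\|K(\cdot,t)\|_{L^1}\le C$ uniformly in $t\in(0,1)$ by Lemma \ref{Lemma:Kernel-K}; on $\{|z|\ge\delta t^{-1/2}\}$ the integrand is $\le2\|u_0\|_{L^\infty}|K(\cdot,1)*G(\cdot,1-t^{1/3})|(z)$ and its integral tends to $0$ as $t\to0^+$ by Lemma \ref{Lemma:unif} (with $h=t$ and $A=\delta t^{-1/2}\to+\infty$). Hence $\limsup_{t\to0^+}\|E(t)u_0-u_0\|_{L^\infty}\le C\varepsilon$, and $\varepsilon\to0$ gives continuity at $0$; continuity at any $t_0>0$ then follows from the semigroup law and the local boundedness of $\|E(t)\|_{\L(Y)}$ via $E(t_0\pm s)u_0-E(t_0)u_0=E(\cdot)\big(E(s)u_0-u_0\big)$. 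This completes i.).

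Finally, to transfer to $X$, I note that $u_0\in C_b^1(\R)$ is Lipschitz (its derivative is bounded), hence uniformly continuous, so $X\subset Y$. Since $K(\cdot,t)\in L^1(\R)$ and $u_0'\in L^\infty(\R)$, differentiation under the integral sign — equivalently Lemma \ref{Lemma:calculus3} with $K(\cdot,t)\in W^{1,1}(\R)$ from Lemmas \ref{Lemma:Kernel-K} and \ref{Lemma:Kernel-Kd} — gives $\partial_x\big(E(t)u_0\big)=E(t)u_0'$. As $u_0\in Y$ and $u_0'\in Y$ (the latter by the definition of $X$), part i.) yields $E(t)u_0,\,E(t)u_0'\in Y$, whence $E(t)u_0\in X$ and $E(t):X\to X$. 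The bound (\ref{eq:semig}) on $C_b^1$ is obtained by summing the two $L^\infty$-estimates from i.) applied to $u_0$ and $u_0'$, and strong continuity reduces to $\|E(t)u_0-u_0\|_{C_b^1}=\|E(t)u_0-u_0\|_{L^\infty}+\|E(t)u_0'-u_0'\|_{L^\infty}\to0$, which is again part i.) applied to $u_0,u_0'\in Y$; the semigroup law and $E(0)=I$ are inherited. The single genuine obstacle is the approximate-identity step of the third paragraph: because the kernel is sign-changing, $L^\infty$-convergence must be extracted by combining the normalization $\hat K(0,t)=1$, the self-similar rescaling (\ref{eq:Kernel}), and the uniform tail control of Lemma \ref{Lemma:unif}.
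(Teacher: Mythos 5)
Your proof is correct and follows essentially the same route as the paper: Young's inequality with Lemma \ref{Lemma:Kernel-K} for the bound (\ref{eq:semig}), the self-similar identity (\ref{eq:Kernel}) combined with the uniform tail control of Lemma \ref{Lemma:unif} and uniform continuity for strong continuity at $t=0$, the semigroup law plus local boundedness for continuity at $t>0$, and the identity $\partial_x(E(t)u_0)=E(t)u_0'$ (which the paper obtains by a difference-quotient/dominated-convergence argument, equivalent to your differentiation under the integral sign) to transfer everything to $X$. The only notable difference is that you prove the semigroup law $E(t)E(s)=E(t+s)$ explicitly on the Fourier side, a detail the paper merely asserts.
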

\begin{proof}
{\bf{i.)}} Let $u_0 \in C_b(\R)$, and $t>0$.  Since 
$u(x,t)=E(t)u_0(x)=\frac{1}{\sqrt{2\pi}} \int K(x-y,t)u_0(y)dy$, it 
follows that 
\begin{equation*}
  \|u(t)\|_{L^\infty} \le \frac{1}{\sqrt{2\pi}} \|u_0\|_{L^\infty} 
  \|K(\cdot,t)\|_{L^1}
  \le C \cdot \big(1+ t^2 e^{\frac{4}{27}a^3t}  
  \big) \|u_0\|_{L^\infty},  
\end{equation*}
where the last inequality is a consequence of Lemma 
\ref{Lemma:Kernel-K}.  Moreover,
\begin{equation*}
  |u(x+h,t)-u(x,t)| \le \frac{\|u_0 \|_{L^\infty}}{\sqrt{2\pi}}
  \|K(\cdot+h,t) -K(\cdot,t)\|_{L^1} \;\; \rightarrow 0 
  \text{ as } h \rightarrow 0.
\end{equation*}
Thus, we have proved that if $u_0 \in C_b(\R)$, then $u(t) \in C_b(\R)$ 
for all $t\ge0$.  In addition, one can see that  
$E(t+s) \phi = E(t) E(s) \phi$, for all $t,s \ge 0$, and $\phi \in C_b(\R)$. 

Suppose now that $t=0$, $u_0 \in Y \setminus \{ 0\}$,  and  $h \in (0,1)$.  
Since $\hat K(0,h) = \frac{1}{\sqrt{2\pi}} \int K(z,h)dz=1$, 
and using (\ref{eq:Kernel}) we have that
\begin{eqnarray}\label{eq:semig1}
  && |u(x,h)- u_0(x)|= \frac{1}{\sqrt{2\pi}} \Big| \int K(z,h) \big( 
   u_0(x-z)-u_0(x)\big) dz \Big|  \nonumber \\
  && = \frac{1}{\sqrt{2\pi}} \Big|\int h^{-1/2} \big( 
  K(\cdot,1) * G(\cdot, 1-h^{1/3}) \big)(h^{-1/2}z) 
  \big(u_0(x-z)-u_0(x)\big) dz \Big| \nonumber \\
  && = \frac{1}{\sqrt{2\pi}} \Big|\int \big(
  K(\cdot,1) * G(\cdot, 1-h^{1/3}) \big)(y) 
  \big(u_0(x-y \sqrt h)-u_0(x) \big) dy \Big|.
\end{eqnarray}
Let $\epsilon >0$.  By Lemma \ref{Lemma:unif}, there exists $A>0$, 
such that for every $h \in [0,1)$,
\begin{equation}\label{eq:semig2}
   \int_{|y|>A} \big|K(\cdot,1) * G(\cdot, 1-h^{1/3})\big|(y) dy 
   < \frac{\sqrt{2\pi} \epsilon}{4\|u_0\|_{L^\infty}}.
\end{equation}
Since $u_0$ is uniformly continuous, there exists $\delta>0$ such 
that for all $z,w \in \R$, 
\begin{equation}\label{eq:semig3}
 \text{if } |z-w|<\delta, \text{ then } 
 |u_0(z)-u_0(w)|<\sqrt{2\pi} \epsilon/(2C \|K(\cdot,1)\|_{L^1}),
\end{equation} 
where $C$ is a positive constant such that  
$\|G(\cdot, 1-h^{1/3})\|_{L^1} \le C$ for all $h \in [0,1/2)$ 
(see Lemma \ref{Lemma:Kernel-G}).  Let  
$h\in(0, \min\{\frac12,\frac{\delta^2}{A^2}  \})$.  Using 
(\ref{eq:semig1})-(\ref{eq:semig3}), we get
\begin{eqnarray*}
  &&|u(x,h)-u_0(x)| \le \frac{2\|u_0\|_{L^\infty}}{\sqrt{2\pi}} 
    \int_{|y|>A} \big|K(\cdot,1) * G(\cdot, 1-h^{1/3}) (y) \big| dy \\
  &&+\frac{1}{\sqrt{2\pi}} \int_{|y| \le A} \big|
  K(\cdot,1) * G(\cdot, 1-h^{1/3})(y)\big| \;\; 
  \big|u_0(x-y \sqrt h)-u_0(x) \big| dy \\
  &&\le \frac{\epsilon}{2} +\frac{\epsilon}{2C\|K(\cdot,1) \|_{L^1}}
    \| K(\cdot,1) * G(\cdot, 1-h^{1/3}) \|_{L^1} \le \epsilon, 
\end{eqnarray*}
for all $x \in \R$, where the last inequality is a consequence 
of Young's inequality.  Therefore,   
\begin{equation}\label{eq:semig4}
  \lim_{h \rightarrow 0} \|u(h) - u_0\|_{L^\infty} =0.
\end{equation}

We notice that if $u_0\in Y$, then $u(t)=E(t)u_0 \in Y$ for all $t\ge0$.  
In fact, assume $t>0$ and let $\epsilon>0$ be given.   Since $u_0$ is uniformly 
continuous, there exists $\delta>0$ such that if $|h| <\delta$, then 
$|u_0(x+h)-u_0(x)| < \epsilon \sqrt{2 \pi}/ \|K(\cdot,t)\|_{L^1}$, for 
any $x\in \R$.   Suppose $|h|<\delta$, then 
\begin{equation*}
  |u(x+h,t)-u(x,t)| \le \frac{1}{\sqrt{2\pi}} 
  \int \big| K(y,t) \big| \big|u_0(x-y+h)-u_0(x-y)\big|dy 
  \le \epsilon, \;\;  \text{ for all } x \in \R.
\end{equation*}
Hence, $u(t)$ is uniformly continuous, for all $t>0$.  

Assume now that $t>0$, and $u_0 \in Y$.  It follows from (\ref{eq:semig4}) and 
the semigroup property that 
\begin{equation*}
  \lim_{h \downarrow 0} \|E(t+h) u_0 -E(t) u_0 \|_{L^\infty} =0.
\end{equation*}
On the other hand, for $h>0$, one can see that
\begin{eqnarray*}
  |u(x,t-h)-u(x,t)| &=& \frac{1}{\sqrt{2\pi}} 
  \Big| \int K(x-y,t-h) \big(u_0(y) -u(y,h) \big) dy \Big| \\
  &\le& \frac{1}{\sqrt{2\pi}}  \|K(\cdot,t-h)\|_{L^1} 
  \|u(h)-u_0\|_{L^\infty} \\
  &\le& C\cdot \big(1 +(t-h)^2 e^{\frac{4}{27}a^3(t-h)}  \big)  
  \|u(h)-u_0\|_{L^\infty},
\end{eqnarray*}
where the last inequality is a consequence of Lemma \ref{Lemma:Kernel-K}.  
Equation (\ref{eq:semig4}) and the last inequality imply that 
\begin{equation*}
  \lim_{h \downarrow 0} \|E(t-h) u_0 -E(t) u_0 \|_{L^\infty} =0.
\end{equation*}

{\bf{ii.)}}  Let $u_0 \in X$.  By item {\bf{i.)}} above, we already know 
that $u\in C([0,+\infty);Y)$, where $u(t)=E(t)u_0$ for all $t\ge0$.  We will 
now prove that $\partial_xu(t) \in C_b(\R)$, $\partial_xu(t)$ is uniformly continuous,  
and $\lim_{h \rightarrow 0} \|\partial_x u(t+h) -\partial_x u(t)\|_{L^\infty} =0$, 
for all $t\ge 0$. Suppose first that $t>0$.  Then
\begin{eqnarray*}
  &&\Big| \frac{u(x+h,t)-u(x,t)}{h} -\frac{1}{\sqrt{2\pi}} 
  K(\cdot,t)*u_0'(x)\Big| \\
  &&=\frac{1}{\sqrt{2\pi}} \Big|\Big(K(\cdot,t)*
  \frac{u_0(\cdot+h)-u_0(\cdot)}{h}  \Big)(x) -K(\cdot,t) *u_0'(x)\Big| \\
  && \le \frac{1}{\sqrt{2\pi}} \int \big|K(x-y,t)\big|  
  \Big|\frac{u_0(y+h)-u_0(y)}{h} -u_0'(y) \Big| dy.
\end{eqnarray*}
The dominated convergence theorem and Lemma \ref{Lemma:Kernel-K} imply 
that the last expression tends to zero as $h$ goes to zero.  Then there exists
\begin{equation}\label{eq:semig5}
  \partial_x u(x,t) = \frac{1}{\sqrt{2\pi}}  K(\cdot,t)*u_0'(x),  
  \;\; \text{for all } x \in \R, \text{ and } t>0.
\end{equation}
It is easy to see that the last expression is also valid if we only require 
that $u_0 \in C_b^1(\R)$.  It follows from (\ref{eq:semig5}) and 
Lemma \ref{Lemma:Kernel-K} that 
\begin{equation*}
  \|\partial_x u(\cdot,t) \|_{L^\infty}   \le C \cdot 
  \big(1+t^2 e^{\frac{4}{27}a^3t} \big) \|u_0'\|_{L^\infty}, 
  \;\;  \text{for all } t>0.
\end{equation*}
Using the fact that $u_0'$ is uniformly continuous, (\ref{eq:semig5}), and 
Lemma \ref{Lemma:Kernel-K}, it follows that 
$\partial_x u(\cdot,t)$ is uniformly continuous, for all $t>0$.

Finally, since $(E(t))_{t\ge0}$ is a $C^0$-semigroup on the space $Y$ and 
using (\ref{eq:semig5}), we see that  
$\lim_{h \rightarrow 0} \|\partial_x u(t+h) -\partial_x u(t)\|_{L^\infty} =0$, 
for all $t\ge 0$.
\end{proof}

%%%%%%%%%%%%%%%%%%%%%%%%%%%%%%%%%%%%%%%%%%%%%%%%%%%%%%%%%%%%%%%%%%%%%%%%%

%%%%%%%%%%%%%%%%%%%%%%%%%%%%%%%%%%%%%%%%%%%%%%%%%%%%%%%%%%%%%%%%%%%%%%%%%
% Local Theory in the Space $X$
%%%%%%%%%%%%%%%%%%%%%%%%%%%%%%%%%%%%%%%%%%%%%%%%%%%%%%%%%%%%%%%%%%%%%%%%%

\subsection{Local Theory in the Space $X$}\label{subsection:local}
In this Sub-section we will use the Banach fixed-point theorem on an 
appropriate complete metric space to find a local-in-time solution to 
the integral equation associated to the IVP (\ref{eq:IVP}).  The following 
lemma will be helpful during the proof of Theorem \ref{T:local} below.
\begin{Lemma}\label{Lemma:int}
Suppose that $u \in C([0,T];X)$.  We define 
\begin{equation}\label{eq:int}
  D(\cdot,t):= \int_0^t K(\cdot,t-s) * \frac12 \partial_x u^2(\cdot,s) ds, 
  \;\;\; \text{ for } t \in [0,T].
\end{equation}
Then $D \in C([0,T];X)$.
\end{Lemma}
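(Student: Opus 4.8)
The plan is to regard $D(\cdot,t)$ as the Duhamel integral $D(\cdot,t)=\sqrt{2\pi}\int_0^t E(t-s)\,w_s\,ds$, where $w_s:=\tfrac12\partial_x u^2(\cdot,s)=u(\cdot,s)\,\partial_x u(\cdot,s)$, and to exploit the smoothing estimate $\|\partial_x K(\cdot,\tau)\|_{L^1}\le C(\tau^{-1/2}+\tau^2 e^{\frac{4}{27}a^3\tau})$ of Lemma~\ref{Lemma:Kernel-Kd} to gain one spatial derivative. First I would record the regularity of the source term in time. Writing $v=u(\cdot,s)$ and $\tilde v=u(\cdot,s')$, the identity $vv'-\tilde v\tilde v'=v(v'-\tilde v')+(v-\tilde v)\tilde v'$ gives $\|w_s-w_{s'}\|_{L^\infty}\le\big(\|u(s)\|_{C_b^1}+\|u(s')\|_{C_b^1}\big)\|u(s)-u(s')\|_{C_b^1}$, so that $s\mapsto w_s$ is continuous from $[0,T]$ into $Y$. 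In particular $M:=\sup_{s\in[0,T]}\|w_s\|_{L^\infty}<\infty$, and since $\{w_s:s\in[0,T]\}$ is a compact subset of $(Y,\|\cdot\|_{L^\infty})$ a standard total-boundedness argument shows it is uniformly equicontinuous: there is a modulus $\omega$ with $\omega(\delta)\to 0$ such that $\sup_{s}\sup_{|h|\le\delta}\|w_s(\cdot+h)-w_s\|_{L^\infty}\le\omega(\delta)$.

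Next, for fixed $t$ I would show $D(\cdot,t)\in X$. The bound $\|D(\cdot,t)\|_{L^\infty}\le\tfrac{M}{\sqrt{2\pi}}\int_0^t\|K(\cdot,t-s)\|_{L^1}\,ds$ is finite by Lemma~\ref{Lemma:Kernel-K}. For the derivative, Lemma~\ref{Lemma:calculus3} applies because $K(\cdot,\tau)\in W^{1,1}(\R)$ for $\tau>0$ (Lemmas~\ref{Lemma:Kernel-K} and \ref{Lemma:Kernel-Kd}) and $w_s\in L^\infty$, giving $\partial_x\big(K(\cdot,t-s)*w_s\big)=\partial_x K(\cdot,t-s)*w_s$; combining the estimate of Lemma~\ref{Lemma:Kernel-Kd} (whose singularity $(t-s)^{-1/2}$ at $s=t$ is integrable) with differentiation under the integral sign yields $D(\cdot,t)\in C^1(\R)$ and $\partial_x D(\cdot,t)=\tfrac{1}{\sqrt{2\pi}}\int_0^t\partial_x K(\cdot,t-s)*w_s\,ds\in C_b(\R)$. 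Uniform continuity of $\partial_x D(\cdot,t)$ then follows from Step~1: convolution against $\partial_x K(\cdot,t-s)\in L^1$ transfers the modulus, so $|\partial_x D(x+h,t)-\partial_x D(x,t)|\le\tfrac{\omega(|h|)}{\sqrt{2\pi}}\int_0^t\|\partial_x K(\cdot,t-s)\|_{L^1}\,ds\to0$ uniformly in $x$. Hence $D(\cdot,t)\in X$.

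For continuity in $t$ (say $h>0$) I would split
\[
 D(\cdot,t+h)-D(\cdot,t)=\int_t^{t+h}K(\cdot,t+h-s)*w_s\,ds+\int_0^t\big[K(\cdot,t+h-s)-K(\cdot,t-s)\big]*w_s\,ds,
\]
and estimate the same decomposition for $\partial_x D$ with $K$ replaced by $\partial_x K$. The first (new-mass) integral is controlled in $C_b^1$ by $\tfrac{M}{\sqrt{2\pi}}\int_0^h\big(\|K(\cdot,\sigma)\|_{L^1}+\|\partial_x K(\cdot,\sigma)\|_{L^1}\big)\,d\sigma\le CM(h+\sqrt h)\to0$, using $\int_0^h\sigma^{-1/2}\,d\sigma=2\sqrt h$. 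For the second integral I would invoke dominated convergence in $s$: for each fixed $s<t$ one has $\|K(\cdot,t+h-s)-K(\cdot,t-s)\|_{L^1}\to0$ and $\|\partial_x K(\cdot,t+h-s)-\partial_x K(\cdot,t-s)\|_{L^1}\to0$ as $h\to0$, while the integrands are dominated by the integrable functions $CM$ and $CM(t-s)^{-1/2}$ respectively, since for $h>0$ one has $(t+h-s)^{-1/2}\le(t-s)^{-1/2}$. The case $h<0$ is handled by splitting around $t+h$. Combining these gives $\|D(\cdot,t+h)-D(\cdot,t)\|_{C_b^1}\to0$, so $D\in C([0,T];X)$.

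The main obstacle is the time-continuity of $\partial_x D$: the derivative integrand carries the singular weight $(t-s)^{-1/2}$, so the dominated-convergence argument for the old-mass term depends both on the uniform domination $(t+h-s)^{-1/2}\le(t-s)^{-1/2}$ and on the $L^1$-continuity of $\tau\mapsto\partial_x K(\cdot,\tau)$ for $\tau>0$. This last ingredient is the only one not stated verbatim among the preceding lemmas, and I would establish it from the Fourier representation~(\ref{eq:K}) of $\partial_x K$ together with Lemma~\ref{Lemma:calculus2} and Lemma~\ref{Lemma:calculus} applied to the time-differences, the bounds being locally uniform in $\tau$.
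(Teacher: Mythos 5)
Your proposal is correct and follows the paper's skeleton for most of the argument: the same fixed-time step (the $L^\infty$ bound via Lemma~\ref{Lemma:Kernel-K}; the identity $\partial_x\big(K(\cdot,t-s)*\tfrac12\partial_x u^2(\cdot,s)\big)=\partial_x K(\cdot,t-s)*\tfrac12\partial_x u^2(\cdot,s)$ via Lemma~\ref{Lemma:calculus3}, with the integrable singularity of Lemma~\ref{Lemma:Kernel-Kd} justifying differentiation under the integral), and the same new-mass/old-mass splitting for continuity in time, with identical treatment of the new-mass term. Where you genuinely diverge is the old-mass term $\int_0^t\big[K(\cdot,t+h-s)-K(\cdot,t-s)\big]*\tfrac12\partial_x u^2(\cdot,s)\,ds$ and its derivative analogue: you run dominated convergence in $s$, which forces you to prove the pointwise $L^1$-continuity in time $\|K(\cdot,\tau+h)-K(\cdot,\tau)\|_{L^1}\to0$ and $\|\partial_x K(\cdot,\tau+h)-\partial_x K(\cdot,\tau)\|_{L^1}\to0$ for fixed $\tau>0$ --- a statement appearing nowhere in the paper, which you correctly flag as the missing ingredient; your sketch for it is viable (apply Lemma~\ref{Lemma:calculus} to $\xi\mapsto i\xi\big[\hat K(\xi,\tau+h)-\hat K(\xi,\tau)\big]$, whose jump term at $\xi=0$ vanishes, and dominate as in Lemma~\ref{Lemma:Kernel-Kd-00}), but it amounts to redoing that lemma's computation for differences. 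The paper avoids this extra work by two devices: for the $L^\infty$ part it rewrites the difference as $\sqrt{2\pi}\int_0^t\|(E(h)-1)E(t-s)\tfrac12\partial_x u^2(\cdot,s)\|_{L^\infty}ds$ and invokes the strong continuity of the semigroup on $Y$ (Lemma~\ref{Lemma:semig}) plus dominated convergence; for the derivative part it extends $\partial_x K$ by zero in time to a function $H\in L^1(\R^2)$ (finite precisely because the $(t-s)^{-1/2}$ singularity of Lemma~\ref{Lemma:Kernel-Kd} is integrable) and bounds the term by $\|u\|^2_{C([0,T];X)}\int\!\!\int|H(x,\tau+h)-H(x,\tau)|\,dx\,d\tau$, which tends to zero by continuity of translations in $L^1(\R^2)$ --- no new kernel estimate, and the singularity is absorbed automatically. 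So your route is conceptually more direct but costs an additional Fourier lemma, while the paper's recycles only the bound $\int_0^T\|\partial_x K(\cdot,s)\|_{L^1}ds<\infty$. Two minor remarks: your uniform modulus of continuity for the family $\{\tfrac12\partial_x u^2(\cdot,s)\}$ (via compactness in $Y$) is a clean alternative to the paper's dominated-convergence treatment of spatial uniform continuity; and your domination $(t+h-s)^{-1/2}\le(t-s)^{-1/2}$ holds only for $h>0$, so the $h<0$ case indeed needs the extra splitting near $t+h$ that you allude to (the paper likewise defers that case as ``similar'').
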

\begin{proof}
{\bf{i.)}} Let $t\in(0,T]$.  Now we first prove that $D(t) \in X$ .  In fact, 
\begin{eqnarray*}
  \|D(\cdot,t) \|_{L^\infty} &\le& \sup_{s\in[0,T]} \|u(\cdot,s)\|_{C_b^1}^2 
  \int_0^t \|K(\cdot,t-s)\|_{L^1} ds \\
  &\le& C \|u\|_{C([0,T];X)}^2 
  \int_0^t \big(1 + (t-s)^2  e^{\frac{4}{27}a^3(t-s)}\big) ds,
\end{eqnarray*}
where the last inequality is a consequence of Lemma \ref{Lemma:Kernel-K}.  Then
\begin{equation}\label{eq:int1}
  \|D(\cdot,t) \|_{L^\infty} \le C' \; \|u\|_{C([0,T];X)}^2 \; \nu(t),
\end{equation}
where 
\begin{equation}\label{eq:int1a}
  \nu(r):= r + r^2 e^{\frac{4}{27}a^3r}, \;\; \text{ for all  } r \ge 0.  
\end{equation}
Moreover, 
\begin{equation*}
  \|D(\cdot+h,t) -D(\cdot,t)\|_{L^\infty} 
  \le \int_0^t \| K(\cdot, t-s) \|_{L^1} 
  \|\frac{1}{2}\partial_x u^2(\cdot+h,s) - 
  \frac{1}{2}\partial_x u^2(\cdot,s)\|_{L^\infty} ds.
\end{equation*}
Using the fact that $\partial_x u(\cdot,s) u(\cdot,s)$ is uniformly 
continuous on $\R$, for all $s \in [0,T]$, Lemma \ref{Lemma:Kernel-K}, 
and the dominated convergence theorem, it follows from the last inequality that 
$D(t)$ is uniformly continuous on $\R$. Now we claim that there exists 
$\frac{\partial D}{\partial x} (x,t)$ (in the classical sense), for all $x \in \R$, and 
\begin{eqnarray}\label{eq:int2}
  \frac{\partial D}{\partial x} (x,t) 
  &=& \int_0^t \partial_x \big( K(\cdot, t-s)*\frac12 \partial_x u^2(\cdot,s) \big)(x) ds 
  \nonumber \\
  &=& \int_0^t \partial_x K(\cdot,t-s) * \frac12 \partial_x u^2(\cdot,s) (x) ds, 
  \;\; \text{ for all } x \in \R.
\end{eqnarray}
We now establish the last claim.  It follows from Lemmas 
\ref{Lemma:Kernel-K},  \ref{Lemma:Kernel-Kd}, and \ref{Lemma:calculus3} that 
$K(\cdot,t-s) *\frac12 \partial_x u^2(\cdot,s) \in C^1(\R) \cap W^{1,\infty} (\R)$, 
and 
\begin{equation}\label{eq:int3}
  \partial_x \big(K(\cdot,t-s) *\frac12 \partial_x u^2(\cdot,s)  \big)(x)
  =\big(\partial_x K(\cdot,t-s) *\frac12 \partial_x u^2(\cdot,s) \big) (x),
\end{equation}
for all $x \in \R$ and $s \in [0,t)$.  Moreover, 
\begin{eqnarray}\label{eq:int4}
 \!\!\!\!\!\!\!\!\!\!\!\!\!\!\!\!\!\!\!&&
 \Big|\frac{D(x+h,t)-D(x,t)}{h} - \int_0^t \partial_x K(\cdot,t-s) 
 *\frac12 \partial_x u^2(\cdot,s) (x) ds\Big|   \nonumber \\
 \!\!\!\!\!\!\!\!\!\!\!\!\!\!\!\!\!\!\!&&
 = \Big| \int_0^t\Big(\frac{K(\cdot+h,t-s)-K(\cdot,t-s)}{h} 
 -\partial_xK(\cdot,t-s)\Big)*\frac12 \partial_x u^2(\cdot,s) (x) ds \Big|. 
\end{eqnarray}
In addition,
\begin{eqnarray}\label{eq:int5}
 && \Big{\|}\Big(\frac{K(\cdot+h,t-s)-K(\cdot,t-s)}{h} 
    -\partial_xK(\cdot,t-s)\Big)*\frac12 \partial_x u^2(\cdot,s)  
    \Big{\|}_{L^\infty} \nonumber \\
 && \le \|u \|_{C([0,T];X)}^2  \;\;
    \Big{\|}\frac{K(\cdot+h,t-s)-K(\cdot,t-s)}{h} 
    -\partial_xK(\cdot,t-s)\Big) \Big{\|}_{L^1} \nonumber  \\
 && \le \|u \|_{C([0,T];X)}^2 \;\;
    \Big( \frac{1}{|h|} \Big|\int_0^h \|\partial_xK(\cdot+y,t-s)\|_{L^1}dy   
    \Big| + \| \partial_x K(\cdot,t-s)  \|_{L^1} \Big)  \nonumber \\
 && \le C \|u \|_{C([0,T];X)}^2 \;\; 
    \Big( \frac{1}{\sqrt{t-s}} +(t-s)^2 
    e^{\frac{4}{27}a^3(t-s)} \Big) \in L^1((0,t),ds),
\end{eqnarray}
where the last inequality is a consequence of Lemma \ref{Lemma:Kernel-Kd}.  
The claim now follows from (\ref{eq:int3})-(\ref{eq:int5}) and the 
dominated convergence theorem.  \\
It follows directly from (\ref{eq:int2}) and Lemma \ref{Lemma:Kernel-Kd} that 
\begin{equation}\label{eq:int6}
  \|\partial_x D(\cdot,t) \|_{L^\infty}
  \le C \; \|u \|_{C([0,T];X)}^2  \; \mu(t),
\end{equation}
where 
\begin{equation}\label{eq:int7a}
  \mu(r):= \sqrt r + r^2 e^{\frac{4}{27}a^3r}, \;\; \text{ for all  } r \ge 0.  
\end{equation}
The fact that $\partial_x D(\cdot,t)$ is uniformly continuous on $\R$ can be 
shown similarly to the analogous result for $D(\cdot,t)$, using 
Lemma \ref{Lemma:Kernel-Kd} instead of Lemma \ref{Lemma:Kernel-K}. 

{\bf{ii.)}} We will now prove that $D \in C([0,T];X)$.  Let $t \in [0,T)$.  
We first assume that $h>0$.  Then 
\begin{equation*}
 \|D(\cdot,t+h) -D(\cdot,t) \|_{L^\infty}
 \le I_1(t,h) +I_2(t,h),
\end{equation*}
where 
\begin{eqnarray*}
  I_1(t,h)&:=& \int_0^t \big{\|} \big( K(\cdot,t+h-s) -K(\cdot,t-s) \big) 
  *\frac12 \partial_x u^2(\cdot,s)\big{\|}_{L^\infty} ds , 
  \;\;\text{ and} \\ 
  I_2(t,h)&:=& \int_t^{t+h} \big{\|} K(\cdot,t+h-s)  
  *\frac12 \partial_x u^2(\cdot,s)\big{\|}_{L^\infty} ds.
\end{eqnarray*}
We see that
\begin{eqnarray*}
  && \big{\|} \big( K(\cdot,t+h-s) -K(\cdot,t-s) \big) *\frac12 
     \partial_x u^2(\cdot,s)\big{\|}_{L^\infty} \\
  && \le \|u \|_{C([0,T];X)}^2 \;\;
     \| K(\cdot,t+h-s) -K(\cdot,t-s) \|_{L^1} \\
  && \le C \|u \|_{C([0,T];X)}^2 \;\;
     \big(1+ T^2 e^{\frac{8}{27}a^3T}\big)
     \;\; \in L^1((0,t),ds),
\end{eqnarray*}
where the last inequality follows from Lemma \ref{Lemma:Kernel-K} and 
the fact that $h \in (0,T)$.  Thus, using Lemma \ref{Lemma:semig} and the 
dominated convergence theorem we have that 
\begin{equation*}
  I_1(t,h)= \sqrt{2\pi} \int_0^t  \big{\|}\big(E(h)-1\big) E(t-s) \frac12 
  \partial_x u^2(\cdot,s) \big{\|}_{L^\infty} ds \rightarrow 0,  
  \;\; \text{ as } h \downarrow 0.
\end{equation*}  
Moreover, using Lemma \ref{Lemma:Kernel-K} we get like in (\ref{eq:int1}) that 
\begin{eqnarray*}
  I_2(t,h) \le  C \; \|u \|_{C([0,T];X)}^2 \; \nu(h) \rightarrow 0,  
  \;\; \text{ as } h \downarrow 0,
\end{eqnarray*}
where $\nu(\cdot)$ is given by (\ref{eq:int1a}).  Hence, 
\begin{equation}\label{eq:int7}
  \lim_{h \downarrow 0} \|D(\cdot,t+h)-D(\cdot,t) \|_{L^\infty} =0.
\end{equation}

On the other hand, it follows from (\ref{eq:int2}) that
\begin{equation*}
  \|\partial_xD(\cdot,t+h) -\partial_xD(\cdot,t) \|_{L^\infty}
 \le J_1(t,h) +J_2(t,h),
\end{equation*}
where 
\begin{eqnarray*}
  J_1(t,h)&:=& \int_0^t \big{\|} \big( \partial_xK(\cdot,t+h-s) 
  -\partial_xK(\cdot,t-s) \big) 
  *\frac12 \partial_x u^2(\cdot,s)\big{\|}_{L^\infty} ds , 
  \;\;\text{ and} \\ 
  J_2(t,h)&:=& \int_t^{t+h} \big{\|} \partial_xK(\cdot,t+h-s)  
  *\frac12 \partial_x u^2(\cdot,s)\big{\|}_{L^\infty} ds.
\end{eqnarray*}
It follows directly from Lemma \ref{Lemma:Kernel-Kd} that 
\begin{eqnarray*}
  J_2(t,h) &\le&  \|u \|_{C([0,T];X)}^2 \;\; \int_0^h 
  \|\partial_x K(\cdot,\tau) \|_{L^1} d\tau  \\
  &\le& C \; \|u \|_{C([0,T];X)}^2 \; \mu(h)  \rightarrow 0, 
  \;\; \text{ as } h \downarrow 0,
\end{eqnarray*}
where $\mu(\cdot)$ is given by (\ref{eq:int7a}).  
To estimate $J_1(t,h)$ we first extend $\partial_xK$ for all times in the 
following way:
\begin{equation*}
  H(\cdot,s) := \left\{
  \begin{array}
  [c]{l}
  \partial_x K(\cdot,s),\text{ if  } s \in [0,T],\\
  0, \text{ if  } s \in \R \setminus [0,T].
  \end{array}
\right.
\end{equation*}
We note that $H \in L^1(\R^2)$.  In fact, by Lemma \ref{Lemma:Kernel-Kd} 
we get
\begin{equation*}
  \int\int |H(x,s)|dxds = \int_0^T \|\partial_x K(\cdot,s) \|_{L^1}ds 
  \le C \; \mu(T).
\end{equation*}
Then
\begin{eqnarray*}
  J_1(t,h) 
  &\le& \|u \|_{C([0,T];X)}^2 \;\; \int_0^t \| \partial_x K(\cdot,\tau+h)
  - \partial_xK(\cdot,\tau) \|_{L^1} d\tau \\
  &\le& \|u \|_{C([0,T];X)}^2 \;\; \int\int |H(x,\tau+h)-H(x,\tau)|dx d\tau 
  \rightarrow 0, \;\; \text{ as } h \downarrow 0,
\end{eqnarray*}
where the last assertion follows from the continuity of translations in 
$L^1(\R^2)$.  Hence,
\begin{equation}\label{eq:int8}
  \lim_{h \downarrow 0} \|\partial_x D(\cdot,t+h)-
  \partial_xD(\cdot,t) \|_{L^\infty} =0.
\end{equation}
It follows from (\ref{eq:int7}) and (\ref{eq:int8}) that 
$ \lim_{h \downarrow 0} \|D(\cdot,t+h)-D(\cdot,t) \|_{C_b^1} =0$.

The case when $t \in (0,T]$ and $h<0$ can be shown similarly to 
the previous case.  This finishes the proof of the lemma.
\end{proof}

%%%%%%%%%%%%%%%%%%%%%%%%%%%%%%%%%%%%%%%%%%%%%%%%%%%%%%%%%%%%%%%%%%%%%%%%%
The next theorem is the main result of this section, it states 
local-in-time existence of the solution of the integral equation 
associated to the IVP (\ref{eq:IVP}).
\begin{Theorem}\label{T:local}
Suppose $u_0 \in X$.  Then there exist $T=T(\|u_0\|_{C_b^1})>0$ and a 
unique function $u \in C([0,T];X)$ satisfying the integral equation
\begin{equation}\label{eq:local}
 u(\cdot,t) = E(t)u_0(\cdot) - \frac12 \int_0^t 
 E(t-s) \partial_x u^2(\cdot,s)ds,
\end{equation}
where $E(t)$ is defined by (\ref{eq:ope}).
\end{Theorem}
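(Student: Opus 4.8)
The plan is to realize the right-hand side of (\ref{eq:local}) as a fixed-point map and to invoke the Banach fixed-point theorem on a closed ball of the Banach space $C([0,T];X)$, equipped with $\|u\|_{C([0,T];X)}=\sup_{t\in[0,T]}\|u(t)\|_{C_b^1}$. Concretely, for $T\in(0,1]$ and $R>0$ to be fixed later, I would set
$$
  \mathcal{B}_{R,T}:=\big\{ u\in C([0,T];X)\; ;\; \|u\|_{C([0,T];X)}\le R\big\},
$$
a complete metric space, and define $\Psi(u)(\cdot,t):=E(t)u_0(\cdot)-\tfrac12\int_0^t E(t-s)\partial_x u^2(\cdot,s)\,ds$. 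First I would verify that $\Psi$ is well defined with values in $C([0,T];X)$: the linear part $t\mapsto E(t)u_0$ lies in $C([0,T];X)$ by Lemma \ref{Lemma:semig}, and the Duhamel part is precisely the map $D$ of Lemma \ref{Lemma:int}, which also lies in $C([0,T];X)$.

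Next I would establish the self-mapping property. Since $T\le 1$, Lemma \ref{Lemma:semig} gives $\|E(t)u_0\|_{C_b^1}\le C_0\|u_0\|_{C_b^1}$ uniformly on $[0,T]$, while combining (\ref{eq:int1}) and (\ref{eq:int6}) of Lemma \ref{Lemma:int} yields, for $u\in\mathcal{B}_{R,T}$,
$$
  \Big\|\tfrac12\int_0^t E(t-s)\partial_x u^2(\cdot,s)\,ds\Big\|_{C_b^1}
  \le C\,\|u\|_{C([0,T];X)}^2\,\big(\nu(T)+\mu(T)\big),
$$
with $\nu,\mu$ given by (\ref{eq:int1a}) and (\ref{eq:int7a}); the crucial point is that $\nu(T)+\mu(T)\to0$ as $T\downarrow0$. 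Choosing $R:=2C_0\|u_0\|_{C_b^1}$ and then $T$ small enough that $C R\,(\nu(T)+\mu(T))\le\tfrac12$ forces $\|\Psi(u)\|_{C([0,T];X)}\le R$, so $\Psi$ maps $\mathcal{B}_{R,T}$ into itself. For the contraction estimate I would use the bilinear structure $\tfrac12(\partial_x u^2-\partial_x v^2)=\tfrac12\partial_x\big((u-v)(u+v)\big)$, whose $L^\infty$ norm is controlled by $\|u-v\|_{C_b^1}\|u+v\|_{C_b^1}$. Rerunning the convolution estimates of Lemma \ref{Lemma:int} on this difference gives
$$
  \|\Psi(u)-\Psi(v)\|_{C([0,T];X)}
  \le C\,\big(\nu(T)+\mu(T)\big)\,\|u+v\|_{C([0,T];X)}\,\|u-v\|_{C([0,T];X)},
$$
so that on $\mathcal{B}_{R,T}$ the Lipschitz constant is at most $2CR\,(\nu(T)+\mu(T))$, which is $<1$ after a further shrinking of $T$. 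The Banach fixed-point theorem then produces a unique fixed point $u\in\mathcal{B}_{R,T}$, the desired solution of (\ref{eq:local}), with $T=T(\|u_0\|_{C_b^1})$ since both $R$ and the smallness threshold for $T$ depend only on $\|u_0\|_{C_b^1}$.

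The step I expect to be most delicate is the control of the nonlinear term, since $\partial_x u^2$ costs one derivative not available from $u\in X\subset C_b^1$ alone. The remedy, already encoded in Lemmas \ref{Lemma:Kernel-K}--\ref{Lemma:Kernel-Kd} and \ref{Lemma:int}, is to transfer the derivative onto the kernel and use that $\|\partial_x K(\cdot,\tau)\|_{L^1}\le C\,\tau^{-1/2}$ near $\tau=0$, an integrable singularity whose time integral produces the vanishing factor $\mu(T)$; this is exactly where the dissipative $-\partial_x^2$ term dominates the anti-dissipative nonlocal term. Finally, to upgrade uniqueness from ``within $\mathcal{B}_{R,T}$'' to uniqueness in all of $C([0,T];X)$, I would compare two arbitrary solutions and close a weakly singular Gronwall inequality with kernel $(t-s)^{-1/2}$ coming from $\|\partial_x K(\cdot,t-s)\|_{L^1}$, forcing the two solutions to coincide on $[0,T]$.
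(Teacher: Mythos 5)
Your proposal is correct and follows essentially the same route as the paper: a Banach fixed-point argument on a bounded ball of $C([0,T];X)$, well-definedness via Lemmas \ref{Lemma:semig} and \ref{Lemma:int}, self-mapping and contraction from the kernel bounds $\|K(\cdot,\tau)\|_{L^1}$ and $\|\partial_x K(\cdot,\tau)\|_{L^1}\lesssim \tau^{-1/2}$ (giving the vanishing factors $\nu(T),\mu(T)$), and uniqueness in the full class $C([0,T];X)$ via a weakly singular Gronwall inequality, exactly as in Proposition \ref{Prop:dep}. The only cosmetic difference is your choice of radius $R=2C_0\|u_0\|_{C_b^1}$ using the uniform operator bound of $E(t)$, where the paper instead takes $M=1+2\|u_0\|_{C_b^1}$ and invokes strong continuity of the semigroup at $t=0$.
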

\begin{proof}
Let $M:=1+2\|u_0\|_{C^1_b}$.  Let $T>0$ be fixed.  $T$ will be suitably 
chosen later.  We now consider the nonlinear operator $A$ given by
\begin{equation*}\label{eq:ope1}
  (Af)(\cdot,t) := E(t)u_0(\cdot) -\frac12 \int_0^t E(t-s) 
  \partial_x f^2(\cdot,s)ds,
\end{equation*}
 defined on the complete metric space
\begin{equation*}\label{eq:ope2}
  \Theta_T^M :=\Big{\{}f \in C([0,T];X) ; \sup_{t\in[0,T]} 
  \|f(\cdot,t) \|_{C_b^1} \le M\Big{\}}.
\end{equation*}

Let $f \in \Theta_T^M$.  It follows from Lemmas \ref{Lemma:semig} 
and \ref{Lemma:int} that $Af \in C([0,T];X)$.  

We will now prove that we can choose $T=\tilde T>0$ small enough such that 
$A(\Theta_{\tilde T}^M) \subset \Theta_{\tilde T}^M$.  
Suppose $f \in \Theta_T^M$.  By Lemma \ref{Lemma:semig} we know that 
$\lim_{h \downarrow 0} \|(E(h)-1) u_0 \|_{C_b^1}=0$.  Then there exists 
$\delta = \delta (\|u_0\|_{C_b^1})>0$ such that if 
$0 \le h \le \delta$, then $\|E(h)u_0\|_{C^1_b} \le \frac12 \big( 1
+ 3\| u_0 \|_{C_b^1}\big)$.  If $T \le \delta$, using 
Lemmas \ref{Lemma:Kernel-K} and \ref{Lemma:Kernel-Kd}, and 
(\ref{eq:int2}), we get 
\begin{eqnarray*}
  && \|(Af)(\cdot,t) \|_{C_b^1} \le \frac12 \big( 1+3\|u_0\|_{C_b^1}\big) \\
  && +\frac{1}{2\sqrt{2\pi}} \int_0^t \big(\|K(\cdot,t-t')\|_{L^1}  
     +\|\partial_x K(\cdot,t-t') \|_{L^1}\big) \|f\|_{C([0,T];X)}^2 dt' \\
  && \le \frac12 \big( 1+3\|u_0\|_{C_b^1}\big) + M^2C 
     \int_0^t\Big[\frac{1}{\sqrt \tau} +\tau^2 e^{\frac{4}{27}a^3\tau}\Big] d\tau \\
  && \le \frac12 \big( 1+3\|u_0\|_{C_b^1}\big) + M^2C \; \mu(T),
\end{eqnarray*}
for all $t \in [0,T]$, where $\mu(\cdot)$ is given by (\ref{eq:int7a}).  
Take $T^\dagger>0$ such that 
$M^2C \; \mu(T^\dagger) \le \frac12\big( 1+\|u_0\|_{C_b^1} \big)$.  Thus, if 
$\tilde T \in (0, \min \{\delta, T^\dagger \}]$, then 
$\|(Af)(\cdot,t) \|_{C_b^1} \le M$ for all $t\in [0,\tilde T]$.

Finally, we will prove that there exists $T' \in (0,\tilde T]$ such 
that $A$ is contractive on $\Theta_{T'}^M$.  Suppose that 
$f,g \in \Theta_{\tilde T}^M$.  Let $t\in[0,\tilde T]$.  Then
\begin{eqnarray*}
 && \|(Af)(\cdot,t) - (Ag)(\cdot,t) \|_{C_b^1} \\
 && \le C \int_0^t \big(\|K(\cdot,t-t')\|_{L^1}  
     +\|\partial_x K(\cdot,t-t') \|_{L^1}\big) \|\partial_x f^2(\cdot,t')  
     -\partial_xg^2(\cdot,t')\|_{L^\infty} dt' \\
 && \le C \int_0^t \big(\|K(\cdot,t-t')\|_{L^1}  
     +\|\partial_x K(\cdot,t-t') \|_{L^1}\big)  \\
 && \times \big[ \|f(\cdot,t') \|_{L^\infty} 
    \|\partial_x(f(\cdot,t')-g(\cdot,t'))\|_{L^\infty}  
    +\|f(\cdot,t')-g(\cdot,t') \|_{L^\infty} 
     \|\partial_xg(\cdot,t') \|_{L^\infty}\big] dt' \\
 && \le C M \|f-g\|_{C([0,\tilde T];X)} \; \mu(t).
\end{eqnarray*}
Taking $T' \in (0,\tilde T]$ such that $CM \; \mu(T')<1$, it 
follows that  $A$ is a contraction on $\Theta_{T'}^M$.  Therefore, 
the mapping $A$ has a unique fixed point $u \in \Theta_{T'}^M$ which 
satisfies equation (\ref{eq:local}) with $T'=T'(\|u_0\|_{C_b^1})>0$.  
The uniqueness of the solution of equation (\ref{eq:local}) in the 
class $C([0,T'];X)$ is a consequence of Proposition \ref{Prop:dep} below.
\end{proof}

The next proposition shows the continuous dependance of the solutions 
of equation (\ref{eq:local}) on the initial data.
%%%%%%%%%%%%%%%%%%%%%%%%%%%%%%%%%%%%%%%%%%%%%%%%%%%%%%%%%%%%%%%%%%%%%%%%%
\begin{Proposition}\label{Prop:dep}
Suppose that $u,v \in C([0,T];X)$ are solutions of equation 
(\ref{eq:local}) with initial data $u_0, v_0 \in X$ respectively.  Then  
for all $t \in [0,T]$ we have 
\begin{equation}\label{eq:dep}
  \|u(\cdot,t) -v(\cdot,t) \|_{C_b^1} \le C
  e^{\alpha t} \|u_0-v_0\|_{C_b^1},
\end{equation}
where $C$ and $\alpha$ are positive constants depending on 
$T, \|u\|_{C([0,T];X)}$, and $\|v\|_{C([0,T];X)}$.
%$\alpha:=\tilde C^2  B(\frac12,\frac12)$ and  
%$C(T):=C'(T)(1+2\tilde C\sqrt T) $, where $C'(T)$ is given 
%by (\ref{eq:depC1}) and  
%$\tilde C =C(T,\|u\|_{C([0,T];X)}, \|v\|_{C([0,T];X)})$ is given by 
%(\ref{eq:depC2}).  Here $B(\cdot,\cdot)$ denotes the beta function 
%defined by 
%\begin{equation*}
%  B(x,y):=\int_0^1 t^{x-1} (1-t)^{y-1} dt,
%\end{equation*}
%for $\Re (x), \Re (y) >0$.
\end{Proposition}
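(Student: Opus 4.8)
The plan is to run a Grönwall argument on the difference $w := u - v$, exploiting the same convolution estimates that drive the proof of Lemma \ref{Lemma:int}. Subtracting the two copies of the integral equation (\ref{eq:local}) gives
\[
 w(\cdot,t) = E(t)(u_0-v_0)(\cdot) - \frac12 \int_0^t E(t-s)\,\partial_x\big(u^2-v^2\big)(\cdot,s)\,ds,
\]
and I would factor the nonlinearity as $u^2-v^2=(u+v)(u-v)$. The whole estimate then reduces to bounding $\|w(\cdot,t)\|_{C_b^1}$ by $\|u_0-v_0\|_{C_b^1}$ plus an integral of the form $\int_0^t(\cdots)\,\|w(\cdot,s)\|_{C_b^1}\,ds$.

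For the linear term I would invoke the $C_b^1$ version of the semigroup bound, namely Lemma \ref{Lemma:semig}(\textbf{ii.)}, which yields $\|E(t)(u_0-v_0)\|_{C_b^1}\le C\big(1+T^2 e^{\frac{4}{27}a^3T}\big)\|u_0-v_0\|_{C_b^1}$ for $t\in[0,T]$. For the Duhamel term, the key observation (already used for $D$ in Lemma \ref{Lemma:int}) is that the $x$-derivative can be transferred onto the kernel: by Lemma \ref{Lemma:calculus3}, $\partial_x\big(K(\cdot,t-s)*\ell\big)=\partial_x K(\cdot,t-s)*\ell$ for $\ell\in L^\infty(\R)$, so with $\ell=\partial_x(u^2-v^2)(\cdot,s)$ one gets
\[
 \big\|E(t-s)\,\partial_x(u^2-v^2)(\cdot,s)\big\|_{C_b^1} \le C\big(\|K(\cdot,t-s)\|_{L^1}+\|\partial_x K(\cdot,t-s)\|_{L^1}\big)\,\|\partial_x(u^2-v^2)(\cdot,s)\|_{L^\infty}.
\]
Lemmas \ref{Lemma:Kernel-K} and \ref{Lemma:Kernel-Kd} bound the kernel factor by $C\big((t-s)^{-1/2}+(t-s)^2 e^{\frac{4}{27}a^3(t-s)}\big)$, while the Leibniz rule gives the product estimate $\|\partial_x(u^2-v^2)(\cdot,s)\|_{L^\infty}\le \big(\|u(\cdot,s)\|_{C_b^1}+\|v(\cdot,s)\|_{C_b^1}\big)\,\|w(\cdot,s)\|_{C_b^1}\le C_0\,\|w(\cdot,s)\|_{C_b^1}$, where $C_0:=\|u\|_{C([0,T];X)}+\|v\|_{C([0,T];X)}$.

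Writing $\phi(t):=\|w(\cdot,t)\|_{C_b^1}$ and assembling these bounds, I would arrive at the integral inequality
\[
 \phi(t) \le C_1\,\|u_0-v_0\|_{C_b^1} + C_2\int_0^t \Big((t-s)^{-1/2}+(t-s)^2 e^{\frac{4}{27}a^3(t-s)}\Big)\phi(s)\,ds,
\]
with $C_1,C_2$ depending only on $T$, $\|u\|_{C([0,T];X)}$ and $\|v\|_{C([0,T];X)}$. The main obstacle is that the kernel $(t-s)^{-1/2}$ is unbounded near $s=t$, so the classical Grönwall lemma does not apply verbatim. I would instead use the generalized \emph{weakly singular} Grönwall inequality, which is tailored to kernels of the form $(t-s)^{\beta-1}$ with $\beta=1/2>0$ (the remaining term $(t-s)^2 e^{\frac{4}{27}a^3(t-s)}$ is bounded on $[0,T]$ and hence harmless). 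This produces a Mittag--Leffler bound $\phi(t)\le C_1\|u_0-v_0\|_{C_b^1}\,E_{1/2}\big(c\sqrt{t}\big)$; since $E_{1/2}(c\sqrt{t})$ grows like $e^{c^2 t}$, it is dominated on $[0,T]$ by $C\,e^{\alpha t}$ for suitable $C,\alpha>0$ depending on the stated quantities, which is exactly (\ref{eq:dep}). Finally, specializing to $u_0=v_0$ yields $u\equiv v$, delivering the uniqueness statement needed to close the proof of Theorem \ref{T:local}.
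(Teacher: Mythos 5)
Your proposal is correct, and up to the final step it coincides with the paper's proof: the same decomposition $w=u-v$, the same factorization $u^2-v^2=(u+v)w$, the same use of Lemma \ref{Lemma:semig} for the linear term and of Lemmas \ref{Lemma:Kernel-K}, \ref{Lemma:Kernel-Kd} (via (\ref{eq:int2})/Lemma \ref{Lemma:calculus3}) for the Duhamel term, leading to the identical weakly singular integral inequality with kernel $(t-t')^{-1/2}$ (the bounded part $(t-t')^2e^{\frac{4}{27}a^3(t-t')}$ being absorbed into it on $[0,T]$, exactly as in the paper's constant $\tilde C$). Where you diverge is in closing the estimate: you invoke the weakly singular (Henry-type) Gr\"onwall inequality as a black box, obtaining a Mittag--Leffler bound $E_{1/2}(c\sqrt t)\le Ce^{c^2t}$, whereas the paper keeps the argument self-contained by substituting the inequality into itself once, applying Fubini, and using the identity
\begin{equation*}
\int_r^t \frac{dt'}{\sqrt{t-t'}\,\sqrt{t'-r}}=B\Big(\tfrac12,\tfrac12\Big)=\pi ,
\end{equation*}
which removes the singularity and reduces the problem to the classical Gr\"onwall lemma. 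The two routes are equivalent in substance --- the one-step iteration is precisely the first step of the proof of the singular Gr\"onwall lemma for exponent $1/2$ --- so your version buys brevity at the cost of citing an external result, while the paper's version buys a fully elementary, self-contained proof with explicit constants; both yield (\ref{eq:dep}) with $C,\alpha$ depending only on $T$, $\|u\|_{C([0,T];X)}$, $\|v\|_{C([0,T];X)}$, and both give uniqueness in Theorem \ref{T:local} by taking $u_0=v_0$.
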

\begin{proof}  Let $t \in [0,T]$.  We write 
$w(\cdot,t):=u(\cdot,t)-v(\cdot,t)$.   Then 
\begin{equation}\label{eq:dep1}
  \|w(\cdot, t) \|_{C_b^1} \le \|E(t)(u_0-v_0)\|_{C_b^1} 
  +\frac12 \Big{\|} \int_0^t E(t-t') \big(\partial_x u^2(\cdot,t') 
  -\partial_xv^2 (\cdot,t')\big) dt' \Big{\|}_{C_b^1}.
\end{equation}
It follows from Lemma \ref{Lemma:semig} that
\begin{equation}\label{eq:dep2}
  \|E(t)(u_0-v_0)\|_{C_b^1} 
  \le C' \|u_0-v_0\|_{C_b^1}, 
\end{equation}
where 
\begin{equation*}\label{eq:depC1}
  C'= C \cdot \big(1+T^2 e^{\frac{4}{27}a^3T}\big).
\end{equation*}
Moreover, by Lemmas \ref{Lemma:Kernel-K} and \ref{Lemma:Kernel-Kd}, we get
\begin{eqnarray}\label{eq:dep3}
  && \frac12 \Big{\|} \int_0^t E(t-t') \big(\partial_x u^2(\cdot,t') 
     -\partial_xv^2 (\cdot,t')\big) dt' \Big{\|}_{C_b^1} \nonumber \\
  && \le \frac{\|u\|_{C([0,T];X)}+\|v\|_{C([0,T];X)}}{\sqrt{2\pi}} \nonumber \\
  && \times \int_0^t \big(\|K(\cdot,t-t')\|_{L^1}  
     +\|\partial_x K(\cdot,t-t') \|_{L^1}\big) \|w(\cdot,t')\|_{C_b^1} dt' 
     \nonumber \\
  && \le C \cdot \big( \|u\|_{C([0,T];X)}+\|v\|_{C([0,T];X)} \big) 
     \nonumber \\
  && \times \int_0^t \Big[ 1+(t-t')^2  e^{\frac{4}{27}a^3(t-t')} +
     \frac{1}{\sqrt{t-t'}}  \Big] \|w(\cdot,t')\|_{C_b^1} dt' 
     \nonumber \\
  && \le \tilde C \int_0^t \frac{\|w(\cdot,t')\|_{C_b^1}}{\sqrt{t-t'}} dt',
\end{eqnarray}
where 
\begin{equation*}\label{eq:depC2}
  \tilde C := C \cdot (1+T^{5/2} e^{\frac{4}{27}a^3T}) 
  \; \big(\|u\|_{C([0,T];X)}+\|v\|_{C([0,T];X)}\big). 
\end{equation*}
Thus, it follows from (\ref{eq:dep1}), (\ref{eq:dep2}) and (\ref{eq:dep3}) that 
\begin{equation*}
  \|w(\cdot,t) \|_{C_b^1} \le C' \|u_0-v_0\|_{C_b^1} 
  + \tilde C \int_0^t \frac{\|w(\cdot,t')\|_{C_b^1}}{\sqrt{t-t'}} dt'.
\end{equation*}
Then
\begin{eqnarray*}
  && \|w(\cdot, t) \|_{C_b^1} \le C' \|u_0-v_0\|_{C_b^1} \\
  && + \tilde C \int_0^t \frac{1}{\sqrt{t-t'}}\Big[ C' \|u_0-v_0\|_{C_b^1} 
     +\tilde C \int_0^{t'}  
     \frac{\|w(\cdot,r)\|_{C_b^1}}{\sqrt{t'-r}}  dr \Big] dt' \\
  && \le C' (1+2\tilde C \sqrt T) \|u_0-v_0\|_{C_b^1} 
     +\tilde C^2 \int_0^t \int_r^{t} 
     \frac{\|w(\cdot,r)\|_{C_b^1}}{\sqrt{t-t'}\sqrt{t'-r}}  dt'dr \\
  && = C \|u_0-v_0\|_{C_b^1} 
     +\tilde C^2 B\big(\frac12,\frac12\big) \int_0^t  \|w(\cdot,r)\|_{C_b^1} dr, 
\end{eqnarray*}
where $B(\cdot,\cdot)$ denotes the beta function defined by 
$B(x,y):=\int_0^1 t^{x-1} (1-t)^{y-1} dt$, for $\Re (x), \Re (y) >0$.  The 
proposition now follows by applying Gronwall's inequality to the last expression. 
\end{proof}
%%%%%%%%%%%%%%%%%%%%%%%%%%%%%%%%%%%%%%%%%%%%%%%%%%%%%%%%%%%%%%%%%%%%%%%%%

%%%%%%%%%%%%%%%%%%%%%%%%%%%%%%%%%%%%%%%%%%%%%%%%%%%%%%%%%%%%%%%%%%%%%%%%%
% Future Work
%%%%%%%%%%%%%%%%%%%%%%%%%%%%%%%%%%%%%%%%%%%%%%%%%%%%%%%%%%%%%%%%%%%%%%%%%
\subsection{Future Work}\label{subsection:future}
Some interesting problems remain, though: the study of the global 
well-posedness for the IVP (\ref{eq:IVP}) with initial data 
belonging to the space $X$, and the nonlinear stability theory of 
the travelling-wave solution of equation (\ref{eq:fow0}).  These  
two problems will be addressed elsewhere.

%%%%%%%%%%%%%%%%%%%%%%%%%%%%%%%%%%%%%%%%%%%%%%%%%%%%%%%%%%%%%%%%%%%%%%%%%
% Appendix
%%%%%%%%%%%%%%%%%%%%%%%%%%%%%%%%%%%%%%%%%%%%%%%%%%%%%%%%%%%%%%%%%%%%%%%%%
%\section{Appendix}
%%%%%%%%%%%%%%%%%%%%%%%%%%%%%%%%%%%%%%%%%%%%%%%%%%%%%%%%%%%%%%%%%%%%%%%%%

\medskip
\noindent {\bf{Acknowledgements:}} The authors were supported by 
the ANR-France project COPTER (Conception, optimisation et prototypage 
d'ouvrages de lutte contre l'erosion en domaine littoral) under 
grant No. NT05-2\_42253.  The authors wish to thank 
R\'emi Carles (I3M-Universit\'e Montpellier 2) and Natha\"el Alibaud 
(D\'epartement de Math\'ematiques de Besan\c{c}on) for 
fruitful discussions.  

%%%%%%%%%%%%%%%%%%%%%%%%%%%%%%%%%%%%%%%%%%%%%%%%%%%%%%%%%%%%%%%%%%%%%%%%%
% The Bibliography
%%%%%%%%%%%%%%%%%%%%%%%%%%%%%%%%%%%%%%%%%%%%%%%%%%%%%%%%%%%%%%%%%%%%%%%%%

\end{document}